\documentclass{elsarticle}
\usepackage[top=1.5 in, bottom=1.5 in, left=1.5 in, right=1.5 in]{geometry}




\usepackage{amsmath}
\usepackage{amssymb}

\usepackage{tikz}

\usepackage{caption}
\usepackage{subcaption,comment}

\usepackage{booktabs}

\usepackage{algorithm}
\usepackage{algorithmicx}
\usepackage{algpseudocode}

\usepackage{longtable}

\usepackage{multirow}

\usepackage{adjustbox}  

\usepackage{natbib, setspace} 
 \bibpunct[, ]{(}{)}{,}{a}{}{,}%

\newtheorem{theorem}{Theorem}
\newtheorem{lemma}[theorem]{Lemma}

\newenvironment{proof}[1][Proof]{\textbf{#1:}\\}{\hfill\rule{1.5mm}{1.5mm}\\}
\newenvironment{omitted_proof}[1][Proof]{\textbf{#1:} }{\hfill\rule{1.5mm}{1.5mm}}

\begin{document}

\begin{frontmatter}

\title{Heuristics with Performance Guarantees for the Minimum Number of Matches Problem in Heat Recovery Network Design}
\author{Dimitrios Letsios\fnref{label0}}
\author{Georgia Kouyialis\fnref{label0}}
\author{Ruth Misener\fnref{label0}\corref{cor1}}

\address[label0]{Department of Computing; Imperial College London; South Kensington SW7 2AZ; UK}
\cortext[cor1]{\texttt{\{d.letsios, g.kouyialis14, r.misener\}@imperial.ac.uk}; Tel: +44 (0) 20759 48315}

\begin{abstract}
Heat exchanger network synthesis exploits excess heat by integrating process hot and cold streams and improves energy efficiency by reducing utility usage.
Determining {provably good solutions to} the minimum number of matches is {a} bottleneck of designing a heat recovery network using the sequential method. This subproblem is an $\mathcal{NP}$-hard mixed-integer linear program exhibiting combinatorial explosion in the possible hot and cold stream configurations. We explore this challenging optimization problem from a graph theoretic perspective and correlate it with other special optimization problems such as cost flow network and packing problems. In the case of a single temperature interval, we develop a new optimization formulation without problematic big-M parameters. 
We develop heuristic methods with performance guarantees using three approaches: (i) relaxation rounding, (ii) water filling, and (iii) greedy packing. Numerical results from a collection of {51 instances} substantiate the strength of the methods.
\end{abstract}

\begin{keyword}
Minimum number of matches \sep Heat exchanger network design \sep Heuristics \sep Approximation algorithms \sep Mixed-integer linear optimization
\end{keyword}

\end{frontmatter}



\onehalfspacing

\noindent
\emph{This manuscript is dedicated, with deepest respect, to the memory of Professor C.\ A.\ Floudas. Professor Floudas showed that, given many provably-strong solutions to the minimum number of matches problem, he could design effective heat recovery networks. So the diverse solutions generated by this manuscript directly improve Professor Floudas' method for automatically generating heat exchanger network configurations.}

\section{Introduction}
\label{sec:introduction}

Heat exchanger network synthesis (HENS) 
minimizes cost and improves energy recovery in chemical processes \citep{biegleretal:1997, smith:2000, eliaetal:2010, balibanetal:2012}.  
HENS exploits excess heat by integrating process hot and cold streams and improves energy efficiency by reducing utility usage \citep{floudas:1987, naess:1988, furman:2002, escobar-et-trierweiler:2013}. 
\citet{floudas:2012} review the critical role of heat integration for energy systems producing liquid transportation fuels \citep{niziolek:2015}.
Other important applications of HENS include: refrigeration systems \citep{shelton:1986}, batch semi-continuous processes \citep{zhao:1998,castro:2015} and water utilization systems \citep{bagakewicz:2002}.



Heat exchanger network design is a mixed-integer nonlinear optimization (MINLP) problem \citep{yee:1990, Ciric:1991, papalexandri-pistikopoulos:1994,hasan-etal:2010}. \cite{mistry:2016} recently showed that expressions incorporating logarithmic mean temperature difference, i.e.\ the nonlinear nature of heat exchange, may be reformulated to decrease the number of nonconvex nonlinear terms in the optimization problem. But HENS remains a difficult MINLP with many nonconvex nonlinearities. One way to generate good HENS solutions is to use the so-called \emph{sequential method} \citep{furman:2002}.
The sequential method decomposes the original HENS MINLP into three tasks: (i) minimizing utility cost, (ii) minimizing the number of matches, and (iii) minimizing the investment cost. 
The method optimizes the three mathematical models sequentially with: (i) a linear program (LP) \citep{cerdaetwesterberg:1983, papoulias:1983}, (ii) a mixed-integer linear program (MILP) \citep{cerda:1983, papoulias:1983}, and (iii) a nonlinear program (NLP) \citep{floudas:1986}.
The sequential method may not return the global solution of the original MINLP, but solutions generated with the sequential method are practically useful.

This paper investigates the \emph{minimum number of matches} problem \citep{floudas}, the computational bottleneck of the sequential method. 
The minimum number of matches problem is a strongly $\mathcal{NP}$-hard MILP \citep{furman:2001}.
Mathematical symmetry in the problem structure combinatorially increases the possible stream configurations and deteriorates the performance of exact, tree-based algorithms \citep{kouyialis:2016}. 

Because state-of-the-art approaches cannot solve the minimum number of matches problem to global optimality for moderately-sized instances \citep{chen:2015}, engineers develop experience-motivated heuristics \citep{hindmarsh:1983,cerdaetwesterberg:1983}. {\cite{hindmarsh:1983} highlight the importance of generating good solutions quickly: a design engineer may want to actively interact with a good minimum number of matches solution and consider changing the utility usage as a result of the MILP outcome.}
%
\citet{furman:2004} propose a collection of approximation algorithms, i.e.\ heuristics with performance guarantees, for the minimum number of matches problem by exploiting the LP relaxation of an MILP formulation.
\citet{furman:2004} present a unified worst-case analysis of their algorithms' performance guarantees and show a non-constant approximation ratio scaling with the number of temperature intervals.
They also prove a constant performance guarantee for the single temperature interval problem.

The standard MILP formulations for the minimum number of matches contain big-M constraints, i.e.\ the on/off switches associated with weak continuous relaxations of MILP.
Both optimization-based heuristics and exact state-of-the-art methods for solving minimum number of matches problem are highly affected by the big-M parameter.
Trivial methods for computing the big-M parameters are typically adopted, but \citet{gundersen:1997} propose a tighter way of computing the big-M parameters.

This manuscript develops new heuristics and provably efficient approximation algorithms for the minimum number of matches problem. These methods have guaranteed solution quality and efficient run-time bounds. 
In the sequential method, many possible stream configurations are required to evaluate the minimum overall cost \citep{floudas}, so a complementary contribution of this work is a heuristic methodology for producing multiple solutions efficiently.
We classify the heuristics based on their algorithmic nature into three categories: (i) relaxation rounding, (ii) water filling, and (iii) greedy packing.

{
The relaxation rounding heuristics we consider are (i) Fractional LP Rounding (FLPR), (ii) Lagrangian Relaxation Rounding (LRR), and (iii) Covering Relaxation Rounding (CRR).
The water-filling heuristics are (i) Water-Filling Greedy (WFG), and (ii) Water-Filling MILP (WFM). 
Finally, the greedy packing heuristics are (i) Largest Heat Match LP-based (LHM-LP), (ii) Largest Heat Match Greedy (LHM), (iii) Largest Fraction Match (LFM), and (iv) Shortest Stream (SS).
Major ingredients of these heuristics are adaptations of single temperature interval algorithms and maximum heat computations with match restrictions.
We propose (i) a novel MILP formulation, and (ii) an improved greedy approximation algorithm for the single temperature interval problem.
Furthermore, we present (i) a greedy algorithm computing maximum heat between two streams and their corresponding big-M parameter, (ii) an LP computing the maximum heat in a single temperature interval using a subset of matches, and (iii) an extended maximum heat LP using a subset of matches on multiple temperature intervals.
}

The manuscript proceeds as follows: 
Section \ref{sec:preliminaries} formally defines the minimum number of matches problem and discusses mathematical models.
Section \ref{sec:heuristics_performance_guarantees} discusses computational complexity and introduces a new $\mathcal{NP}$-hardness reduction of the minimum number of matches problem from bin packing. 
Section \ref{sec:single_temperature_interval} 
focusses on the single temperature interval problem.
Section \ref{sec:max_heat} explores computing the maximum heat exchanged between the streams with match restrictions.
Sections \ref{sec:relaxation_rounding} - \ref{sec:greedy_packing} present our heuristics for the minimum number of matches problem based on: (i) relaxation rounding, (ii) water filling, and (iii) greedy packing, respectively, as well as new theoretical performance guarantees.
Section \ref{sec:results} evaluates experimentally the heuristics and discusses numerical results.
Sections \ref{sec:discussion} and \ref{sec:conclusion} discuss the manuscript contributions and conclude the paper.

\section{Minimum Number of Matches for Heat Exchanger Network Synthesis}
\label{sec:preliminaries}
This section defines the minimum number of matches problem and presents the standard transportation and transshipment MILP models. 
Table \ref{tbl:notation} contains the notation.

\singlespacing
\begin{longtable}{l l l}
\caption{Nomenclature}\\
\toprule
Name & Description \\
\midrule
{\bf Cardinalities} & \\
$n$ & Number of hot streams \\
$m$ & Number of cold streams \\
$k$ & Number of temperature intervals\\
$v$ & Number of matches (objective value) \\
\midrule
{\bf Indices} \\
$i\in H$ & Hot stream\\
$j\in C$ & Cold stream\\
$s,t,u \in T$ & Temperature interval\\
$b\in B$ & Bin (single temperature interval problem) \\
\midrule
{\bf Sets} & & \\
$H$, $C$ & Hot, cold streams \\
$T$ & Temperature intervals \\
$M$ & Set of matches (subset of $H\times C$) \\
$C_i(M), H_j(M)$ & Cold, hot streams matched with $i\in H$, $j\in C$ in $M$ \\
$B$ & Bins (single temperature interval problem) \\
$A(M)$ & Set of valid quadruples $(i,s,j,t)$ with respect to a set $M$ of matches \\
$A_u(M)$ & Set of quadruples $(i,s,j,t)\in A(M)$ with $s\leq u<t$ \\
$V^H(M)$ & Set of pairs $(i,s)\in H\times T$ appearing in $A(M)$ (transportation vertices) \\
$V^C(M)$ & Set of pairs $(j,t)\in C\times T$ appearing in $A(M)$ (transportation vertices) \\
$V_{i,s}^C(M)$ & Set of pairs $(j,t)\in V^C(M)$ such that $(i,s,j,t)$ belongs to $A(M)$ \\
$V_{j,t}^H(M)$ & Set of pairs $(i,s)\in V^H(M)$ such that $(i,s,j,t)$ belongs to $A(M)$ \\
\midrule
{\bf Parameters} & \\
$h_i$ & Total heat supplied by hot stream $i$ ($h_i=\sum_{s\in T}\sigma_{i,s}$) \\
$h_{\max}$ & Maximum heat among all hot streams ($h_{\max}=\max_{i\in H}\{h_i\}$) \\
$c_j$ & Total heat demanded by cold stream $j$ ($c_j=\sum_{t\in T}\delta_{j,t}$) \\
$\sigma_{i,s}$ & Heat supply of hot stream $i$ in interval $s$ \\
$\delta_{j,t}$ & Heat demand of cold stream $j$ in interval $t$ \\
$\vec{\sigma}, \vec{\delta}$ & Vectors of all heat supplies, demands \\ 
$\vec{\sigma}_t, \vec{\delta}_t$ & Vectors of all heat supplies, demands in temperature interval $t$ \\ 
$R_{t}$ & Residual heat exiting temperature interval $t$\\
$U_{i,j}$ & Upper bound (big-M parameter) on the heat exchanged via match $(i,j)$ \\
$\lambda_{i,j}$ & Fractional cost approximation of match $(i,j)$ (Lagrangian relaxation) \\
$\vec{\lambda}$ & Vector of all fractional cost approximations $\lambda_{i,j}$ \\
\midrule
{\bf Variables} & \\
$y_{i,j}$ & Binary variable indicating whether $i$ and $j$ are matched \\
$q_{i,j,t}$ & Heat of hot stream $i$ received by cold stream $j$ in interval $t$ \\
$q_{i,s,j,t}$ & Heat exported by hot stream $i$ in $s$ and received by cold stream $j$ in $t$ \\
$\vec{y}, \vec{q}$ & Vectors of binary, continuous variables \\
$r_{i,s}$ & Heat residual of heat of hot stream $i$ exiting $s$ \\
$x_b$ & Binary variable indicating whether bin $b$ is used \\
$w_{i,b}$ & Binary variable indicating whether hot stream $i$ is placed in bin $b$ \\
$z_{j,b}$ & Binary variable indicating whether cold stream $j$ is placed in bin $b$ \\
\midrule
{\bf Other} & \\
$N$ & Minimum cost flow network  \\
$G$ & Solution graph (single temperature interval problem) \\
$\phi(M)$ & Filling ratio of a set $M$ of matches \\
$\vec{y}^f, \vec{q}^f$ & Optimal fractional solution \\
$\alpha_i, \beta_j$ & Number of matches of hot stream $i$, cold stream $j$ \\
$L_{i,j}$ & Heat exchanged from hot stream $i$ to cold stream $j$ \\
$I$ & Instance of the problem \\
$r$ & Remaining heat of an algorithm \\
\bottomrule
\label{tbl:notation}
\end{longtable}
\onehalfspacing

\subsection{Problem Definition}

{

Heat exchanger network design involves a set $HS$ of \emph{hot process streams} to be cooled and a set $CS$ of \emph{cold process streams} to be heated.
Each hot stream $i$ posits an initial temperature $T_{\text{in},i}^{HS}$ and a target temperature $T_{\text{out},i}^{HS}$ ($<T_{\text{in},i}^{HS}$).
Analogously, each cold stream $j$ has an initial temperature $T_{\text{in},j}^{CS}$ and a target temperature $T_{\text{out},j}^{CS}$ ($>T_{\text{in},j}^{CS}$).
Every hot stream $i$ and cold stream $j$ are associated flow rate heat capacities $FCp_i$ and $FCp_j$, respectively.
Minimum heat recovery approach temperature $\Delta T_{\min}$ relates the hot and cold stream temperature axes.
A hot utility $i$ in a set $HU$ and a cold utility $j$ in a set $CU$ may be purchased at a cost, e.g.\ with unitary costs $\kappa_i^{HU}$ and $\kappa_j^{CU}$.
Like the streams, the utilities have inlet and outlet temperatures $T_{\text{in},i}^{HU}$, $T_{\text{out},i}^{HU},T_{\text{in},j}^{CU}$ and $T_{\text{out},j}^{CU}$.
The first step in a sequential approach to HENS minimizes the utility cost and thereby specifies the heat each utility introduces in the network.
The next step minimizes the number of matches.
\ref{App:Minimum_Utility_Cost} discusses the transition from the minimizing utility cost to minimizing the number of matches.
After this transition, each utility may, without loss of generality, be treated as a stream.
}

The minimum number of matches problem posits a set of \emph{hot process streams} to be cooled and a set of \emph{cold process streams} to be heated.
Each stream is associated with an initial and a target temperature. 
This set of temperatures defines a collection of \emph{temperature intervals}. 
Each hot stream exports (or supplies) heat in each temperature interval between its initial and target temperatures.
Similarly, each cold stream receives (or demands) heat in each temperature interval between its initial and target temperatures.
\ref{App:Minimum_Utility_Cost} formally defines the temperature range partitioning.
Heat may flow from a hot to a cold stream in the same or a lower temperature interval, but not in a higher one.
In each temperature interval, the \emph{residual heat} descends to lower temperature intervals.
A zero heat residual is a \emph{pinch point}.
A pinch point restricts the maximum energy integration and divides the network into subnetworks. 

A problem instance consists of a set $H=\{1,2,\ldots,n\}$ of hot streams, a set $C=\{1,2,\ldots,m\}$ of cold streams, and a set $T=\{1,2,\ldots,k\}$ of temperature intervals. 
Hot stream $i\in H$ has heat supply $\sigma_{i,s}$ in temperature interval $s\in T$ and cold stream $j\in C$ has heat demand $\delta_{j,t}$ in temperature interval $t\in T$.
Heat conservation is satisfied, i.e.\ $\sum_{i\in H}\sum_{s\in T}\sigma_{i,s} = \sum_{j\in C}\sum_{t\in T}\delta_{j,t}$.
We denote by $h_i=\sum_{s\in T}\sigma_{i,s}$ the total heat supply of hot stream $i\in H$ and by $c_j=\sum_{t\in T}\delta_{j,t}$ the total heat demand of cold stream $j\in C$.

A feasible solution specifies a way to transfer the hot streams' heat supply to the cold streams, i.e.\ an amount $q_{i,s,j,t}$ of heat exchanged between hot stream $i\in H$ in temperature interval $s\in T$ and cold stream $j\in C$ in temperature interval $t\in T$.
Heat may only flow to the same or a lower temperature interval, i.e.\ $q_{i,s,j,t}=0$, for each $i\in H$, $j\in C$ and $s,t\in T$ such that $s>t$.
A hot stream $i\in H$ and a cold stream $j\in C$ are \emph{matched}, if there is a positive amount of heat exchanged between them, i.e.\ $\sum_{s,t\in T}q_{i,s,j,t}>0$.
The objective is to find a feasible solution minimizing the number of matches $(i,j)$.

\subsection{Mathematical Models}
The transportation and transshipment models formulate the minimum number of matches as a mixed-integer linear program (MILP).

\paragraph{Transportation Model \citep{cerda:1983}} 
As illustrated in Figure \ref{Fig:transportation}, the transportation model represents heat as a commodity transported from supply nodes to destination nodes.
For each hot stream $i\in H$, there is a set of supply nodes, one for each temperature interval $s\in T$ with $\sigma_{i,s}>0$.
For each cold stream $j\in C$, there is a set of demand nodes, one for each temperature interval $t\in T$ with $\delta_{j,t}>0$.
There is an arc between the supply node $(i,s)$ and the destination node $(j,t)$ if $s\leq t$, for each $i\in H$, $j\in C$ and $s,t\in T$.

In the MILP formulation, variable $q_{i,s,j,t}$ specifies the heat transferred from hot stream $i\in H$ in temperature interval $s\in T$ to cold stream $j\in C$ in temperature interval $t\in T$. 
Binary variable $y_{i,j}$ if whether streams $i\in H$ and $j\in C$ are matched or not. 
Parameter $U_{i,j}$ is a big-M parameter bounding the amount of heat exchanged between every pair of hot stream $i\in H$ and cold stream $j\in C$, e.g.\ $U_{i,j}=\min\{h_i,c_j\}$.
The problem is formulated:
{\allowdisplaybreaks
\begin{align}
\text{min} & \sum_{i \in H}\sum_{j \in C} y_{i,j} \label{TransportationMIP_Eq:ObjMinMatches} \\ 
& \sum_{j\in C}\sum_{t\in T} q_{i,s,j,t} = \sigma_{i,s} & i\in H, s\in T \label{TransportationMIP_Eq:HotStreamConservation}\\
& \sum_{i\in H}\sum_{s\in T} q_{i,s,j,t} = \delta_{j,t} & j\in C, t\in T \label{TransportationMIP_Eq:ColdStreamConservation}\\
& \sum_{s,t\in T} q_{i,s,j,t}\leq U_{i,j}\cdot y_{i,j} & i\in H, j\in C \label{TransportationMIP_Eq:BigM_Constraint}\\
& q_{i,s,j,t} = 0 & i\in H, j\in C, s,t\in T: s> t \label{TransportationMIP_Eq:ThermoConstraint} \\
& y_{i,j} \in \{0,\,1\},q_{i,s,j,t}\geq 0 & i\in H, j\in C,\; s,t\in T \label{TransportationMIP_Eq:IntegralityConstraints}
\end{align}
}
Expression (\ref{TransportationMIP_Eq:ObjMinMatches}), the objective function, minimizes the number of matches.
Equations (\ref{TransportationMIP_Eq:HotStreamConservation}) and (\ref{TransportationMIP_Eq:ColdStreamConservation}) ensure heat conservation.
Equations (\ref{TransportationMIP_Eq:BigM_Constraint}) enforce a match between a hot and a cold stream if they exchange a positive amount of heat.
Equations (\ref{TransportationMIP_Eq:BigM_Constraint}) are \emph{big-M constraints}.
Equations (\ref{TransportationMIP_Eq:ThermoConstraint}) ensure that no heat flows to a hotter temperature.

{

The transportation model may be reduced by removing redundant variables and constraints.
Specifically, a mathematically-equivalent \emph{reduced transportation MILP model} removes: (i) all variables $q_{i,s,j,t}$ with $s> t$ and (ii) Equations (\ref{TransportationMIP_Eq:ThermoConstraint}).
But modern commercial MILP solvers may detect redundant variables constrained to a fixed value and exploit this information to their benefit. Table \ref{Table:Transportation_Models_Comparison} shows that the aggregate performance of CPLEX and Gurobi is unaffected by the redundant constraints and variables.

}

\begin{figure*}[t!]
\centering

\begin{subfigure}[t]{0.45\textwidth}
\centering
\includegraphics{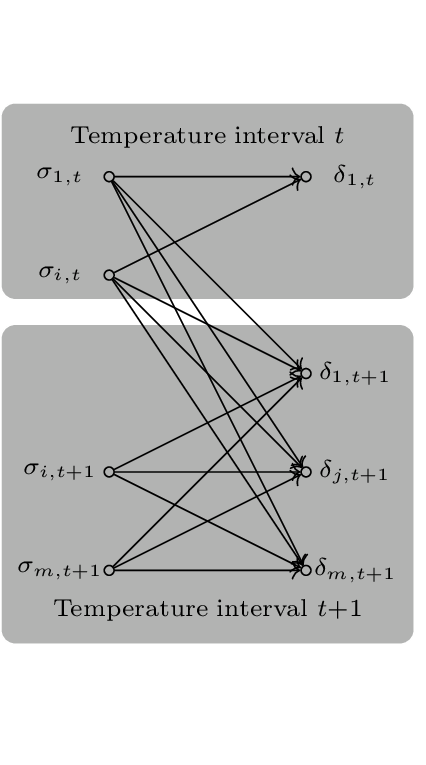}
\vspace*{-1cm}
\caption{ Transportation Model}
\label{Fig:transportation}
\end{subfigure}
\begin{subfigure}[t]{0.45\textwidth}
\centering
\includegraphics{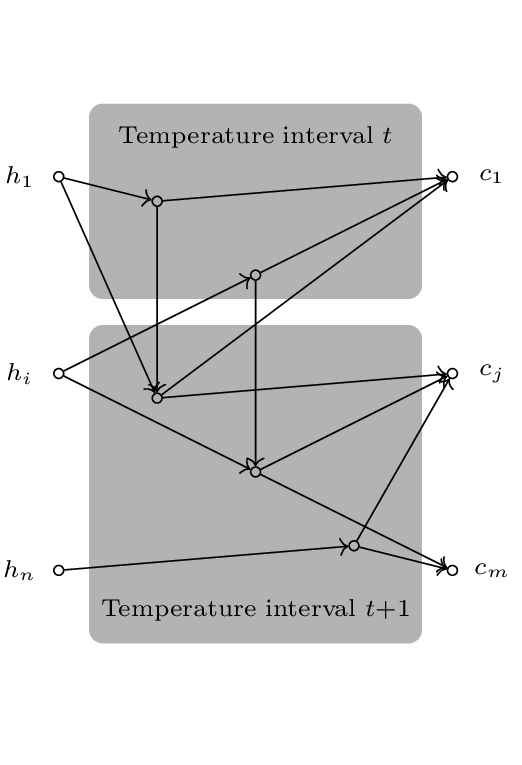}
\vspace*{-1cm}
\caption{Transshipment Model}
\label{Fig:transshipment}
\end{subfigure}
\vspace*{-0.5cm}
\caption{
In the transportation model \citep{cerda:1983}, each hot stream $i$ supplies $\sigma_{i,t}$ units of heat in temperature interval $t$ which can be received, in the same or a lower temperature interval, by a cold stream $j$ which demands $\delta_{j,t}$ units of heat in $t$. 
In the transshipment model \citep{papoulias:1983}, there are also intermediate nodes transferring residual heat to a lower temperature interval.
This figure is adapted from \citet{furman:2004}.
}
\end{figure*}

\paragraph{Transshipment Model \citep{papoulias:1983}} 
As illustrated in Figure \ref{Fig:transshipment}, the transshipment formulation transfers heat from hot streams to cold streams via intermediate transshipment nodes.
In each temperature interval, the heat entering a transshipment node either transfers to a cold stream in the same temperature interval or it descends to the transshipment node of the subsequent temperature interval as residual heat.

Binary variable $y_{i,j}$ is 1 if hot stream $i\in H$ is matched with cold stream $j\in C$ and 0 otherwise.
Variable $q_{i,j,t}$ represents the heat received by cold stream $j\in C$ in temperature interval $t\in T$ originally exported by hot stream $i\in H$.
Variable $r_{i,s}$ represents the residual heat of hot stream $i\in H$ that descends from temperature interval $s$ to temperature interval $s+1$. 
Parameter $U_{i,j}$ is a big-M parameter bounding the heat exchanged between hot stream $i\in H$ and cold stream $j\in C$, e.g.\ $U_{i,j}=\min\{h_i,c_j\}$.
The problem is formulated:
{\allowdisplaybreaks
\begin{align}
\text{min} & \sum_{i\in H}\sum_{j \in C} y_{i,j} \label{TransshipmentMIP_Eq:ObjMinMatches}  \\ 
& \sum_{j \in C} q_{i,j,s} + r_{i,s} = \sigma_{i,s} + r_{i,s-1} & i\in H, s\in T \label{TransshipmentMIP_Eq:HotStreamConservation} \\
& r_{i,k} = 0 & i\in H \label{TransshipmentMIP_Eq:HeatConservation} \\
& \sum_{i\in H} q_{i,j,t}  = \delta_{j,t} &  j\in C, t \in T \label{TransshipmentMIP_Eq:ColdStreamConservation} \\ 
& \sum_{t\in T} q_{i,j,t}\leq U_{i,j}\cdot y_{i,j} & i\in H, j\in C \label{TransshipmentMIP_Eq:BigM_Constraint} \\
& y_{i,j}\in \{0,1\},\; q_{i,j,t}, r_{i,s}\geq 0 & i\in H, j\in C, s,t\in T
\end{align}
}
Expression (\ref{TransshipmentMIP_Eq:ObjMinMatches}) minimizes the number of matches. 
Equations (\ref{TransshipmentMIP_Eq:HotStreamConservation})-(\ref{TransshipmentMIP_Eq:ColdStreamConservation}) enforce heat conservation.
Equation (\ref{TransshipmentMIP_Eq:BigM_Constraint}) allows positive heat exchange between hot stream $i\in H$ and cold stream $j\in C$ only if $(i,j)$ are matched.


\section{Heuristics with Performance Guarantees}
\label{sec:heuristics_performance_guarantees}

\subsection{Computational Complexity}
\label{sec:computational_complexity}

We briefly introduce $\mathcal{NP}$-completeness and basic computational complexity classes \citep{arora:2009,papadimitriou:1994}.
A \emph{polynomial algorithm} produces a solution for a computational problem with a running time polynomial to the size of the problem instance.
There exist problems which admit a polynomial-time algorithm and others which do not.
There is also the class of \emph{$\mathcal{NP}$-complete problems} for which we do not know whether they admit a polynomial algorithm or not.
The question of whether $\mathcal{NP}$-complete problems admit a polynomial algorithm is known as the $\mathcal{P}=\mathcal{NP}$ question.
In general, it is conjectured that $\mathcal{P}\neq \mathcal{NP}$, i.e.\ $\mathcal{NP}$-complete problems are not solvable in polynomial time.
An optimization problem is \emph{$\mathcal{NP}$-hard} if its decision version is $\mathcal{NP}$-complete.
A computational problem is \emph{strongly $\mathcal{NP}$-hard} if it remains $\mathcal{NP}$-hard when all parameters are bounded by a polynomial to the size of the instance.


The minimum number of matches problem is known to be strongly $\mathcal{NP}$-hard, even in the special case of a single temperature interval.
\citet{furman:2004} propose an $\mathcal{NP}$-hardness reduction from the well-known 3-Partition problem, i.e.\ they show that the minimum number of matches problem has difficulty equivalent to the 3-Partition problem.
\ref{App:NP_harndess} presents an alternative $\mathcal{NP}$-hardness reduction from the bin packing problem. This alternative setting of the minimum number of matches problem gives new insight into the packing nature of the problem.
A major contribution of this paper is to design efficient, greedy heuristics motivated by packing.

\begin{theorem}
\label{thm:NP_hardness}
There exists an $\mathcal{NP}$-hardness reduction from bin packing to the minimum number of matches problem with a single temperature interval.
\end{theorem}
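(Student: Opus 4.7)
The plan is to give a polynomial-time reduction from bin packing to the minimum number of matches problem restricted to a single temperature interval. Since heat conservation must hold in the matches instance, we first recall that bin packing remains strongly $\mathcal{NP}$-hard even when the item sizes sum to exactly $k$ times the bin capacity (otherwise, pad with dummy items of size equal to the residual slack; this keeps the decision version equivalent). So I may assume a bin packing instance with items $a_1,\ldots,a_n$, bin capacity $B$, target number of bins $k$, and $\sum_{i=1}^n a_i = kB$.

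The construction is as follows. Build a single temperature interval instance of the minimum number of matches problem with $n$ hot streams and $k$ cold streams, setting $h_i = a_i$ for each $i \in H$ and $c_j = B$ for each $j \in C$. Heat conservation is then automatic from $\sum a_i = kB$. I would then claim that the bin packing instance admits a feasible packing with $k$ bins if and only if the minimum number of matches equals $n$.

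The key structural observation, which I would state as an easy lemma, is that in any feasible solution to the matches instance, each hot stream must be matched with at least one cold stream in order to offload all of its heat, so the number of matches is always at least $n$. The forward direction is direct: given a feasible bin packing, match hot stream $i$ only to the cold stream corresponding to the bin containing item $i$; the cold stream demands are met because each bin is exactly full (total equals $kB$), yielding exactly $n$ matches. For the reverse direction, if the minimum number of matches is $n$, then by the lower bound each hot stream is matched to exactly one cold stream; interpreting the match of hot stream $i$ as the bin assignment of item $i$ produces a packing into the $k$ bins, and feasibility of the cold stream demands $c_j = B$ translates to each bin receiving total size at most (in fact, exactly) $B$.

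The only subtlety, and the part I expect to be the main obstacle, is the heat conservation requirement, since generic bin packing instances need not satisfy $\sum a_i = kB$. I would handle this either by invoking the known $\mathcal{NP}$-hardness of the exactly-filled variant, or, more robustly, by adding at most $k$ dummy items whose sizes sum to $kB - \sum a_i$ and fit in the residual slack of some feasible packing without affecting feasibility of the decision problem; these dummy items become additional hot streams in the constructed instance, and the equivalence between the two decision problems is preserved. The reduction is clearly polynomial in the input size, completing the argument.
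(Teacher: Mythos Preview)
Your reduction is correct and essentially identical to the paper's: items become hot streams with $h_i=a_i$, bins become cold streams with $c_j=B$, and a feasible $k$-bin packing exists if and only if the constructed instance admits a solution with exactly $n$ matches. You are in fact more careful than the paper about enforcing heat conservation $\sum_i h_i=\sum_j c_j$ (the paper's proof silently works with $\sum_i q_{i,j}\le c_j$ on the cold side); your padding argument is slightly loose as phrased---``at most $k$ dummy items'' need not pack unless one already knows the per-bin slacks of a feasible solution---but your first option (invoking the exact-fill variant) is valid, and alternatively, since bin packing is strongly $\mathcal{NP}$-hard, padding with $kB-\sum_i a_i$ unit-size items (polynomially many) gives a clean equivalent exact-fill instance.
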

\begin{omitted_proof}
See \ref{App:NP_harndess}.
\end{omitted_proof}


\subsection{Approximation Algorithms}
\label{sec:approximation_algorithms}

A heuristic with a performance guarantee is usually called an \emph{approximation algorithm} \citep{vazirani:2001,williamson:2011}.
Unless $\mathcal{P}=\mathcal{NP}$, there is no polynomial algorithm solving an $\mathcal{NP}$-hard problem.
An approximation algorithm is a polynomial algorithm producing a near-optimal solution to an optimization problem. 
Formally, consider an an optimization problem, without loss of generality minimization, and a polynomial Algorithm $A$ for solving it (not necessarily to global optimality).
For each problem instance $I$, let $C_A(I)$ and $C_{OPT}(I)$ be the algorithm's objective value and the optimal objective value, respectively.
Algorithm $A$ is $\rho$-approximate if, for every problem instance $I$, it holds that:
\begin{equation*}
C_A(I)\leq \rho\cdot C_{OPT}(I).
\end{equation*}
That is, a $\rho$-approximation algorithm computes, in polynomial time, a solution with an objective value at most $\rho$ times the optimal objective value.
The value $\rho$ is the \emph{approximation ratio} of Algorithm $A$. 
To prove a $\rho$-approximation ratio, we proceed as depicted in Figure \ref{Fig:ApproximationAlgorithm}.
For each problem instance, we compute analytically a lower bound $C_{LB}(I)$ of the optimal objective value, i.e.\ $C_{LB}(I) \leq C_{OPT}(I)$, and we show that the algorithm's objective value is at most $\rho$ times the lower bound, i.e.\ $C_A(I)\leq \rho \cdot C_{LB}(I)$.
The ratio of a $\rho$-approximation algorithm is \emph{tight} if the algorithm is not $\rho-\epsilon$ approximate for any $\epsilon>0$. 
An algorithm is $O(f(n))$-approximate and $\Omega(f(n))$-approximate, where $f(n)$ is a function of an input parameter $n$, if the algorithm does not have an approximation ratio asymptotically higher and lower, respectively, than $f(n)$.


\begin{figure}
\begin{center}
\includegraphics{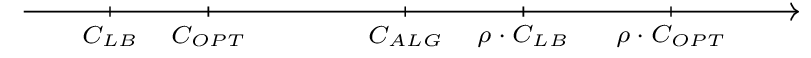}
\caption{Analysis of an Approximation Algorithm}
\label{Fig:ApproximationAlgorithm}
\end{center}
\end{figure}

Approximation algorithms have been developed for two problem classes relevant to process systems engineering: heat exchanger networks \citep{furman:2004} and pooling \citep{dey2015analysis}. Table \ref{Table:Heuristics_Performance_Guarantees} lists performance guarantees for the minimum number of matches problem; most are new to this manuscript.

\begin{table}[t]
\small
\begin{adjustbox}{center} 
\begin{tabular}{ | l l c c c | }
\hline
\textbf{Heuristic} & \textbf{Abbrev.} & \textbf{Section} & \textbf{Performance Guarantee} & \textbf{Running Time} \\ 
\hline
\multicolumn{5}{|l|} {\textbf{Single Temperature Interval Problem}} \\ 
Simple Greedy & SG & \ref{Sec:SingleTemperatureIntervalProblem-Approximation} & 2$^{\dagger}$ (tight) & $O(nm)$ \\
Improved Greedy & IG & \ref{Sec:SingleTemperatureIntervalProblem-Approximation} &  1.5 (tight) & $O(nm)$ \\
\hline
\multicolumn{5}{|l|} {\textbf{Relaxation Rounding Heuristics}} \\ 
Fractional LP Rounding & FLPR & \ref{Subsection:FLPR} & $O(k)^{\dagger}$, $O(U_{\max})$, $\Omega(n)$ & 1 LP \\
Lagrangian Relaxation Rounding & LRR & \ref{Subsection:LRR} & & 2 LPs \\
Covering Relaxation Rounding & CRR & \ref{Subsection:CRR} & & $O(nm)$ ILPs \\
\hline
\multicolumn{5}{|l|} {\textbf{Water Filling Heuristics}} \\ 
Water Filling MILP & WFM & \ref{sec:water_filling} \& \ref{Sec:SingleTemperatureIntervalProblem-MILP} & $O(k)^{\dagger}$, $\Omega(k)$ & $O(k)$ MILPs \\ 
Water Filling Greedy & WFG & \ref{sec:water_filling} \& \ref{Sec:SingleTemperatureIntervalProblem-Approximation} & $O(k)^{\dagger}$, $\Omega(k)$ & $O(n m k)$ \\
\hline
\multicolumn{5}{|l|} {\textbf{Greedy Packing Heuristics}} \\ 
Largest Heat Match LP-based & LHM-LP & \ref{Subsection:Largest_Heat_Match} & $O(\log n + \log (h_{\max}/\epsilon))$ & $O(n^2m^2)$ LPs \\
Largest Heat Match Greedy & LHM & \ref{Subsection:Largest_Heat_Match} & & $O(n^2 m^2 k)$ \\
Largest Fraction Match & LFM & \ref{Subsection:LFM} & & $O(n^2 m^2 k)$  \\
Shortest Stream & SS & \ref{Subsection:SS} & & $O(n m k)$  \\
\hline
\end{tabular}
\end{adjustbox} 
\caption{Performance guarantees for the minimum number of matches problem. The performance guarantees marked ${\dagger}$ are from \cite{furman:2004}; all others are new to this manuscript.}
\label{Table:Heuristics_Performance_Guarantees}
\end{table}



\section{Single Temperature Interval Problem}
\label{sec:single_temperature_interval}

This section proposes efficient algorithms for the single temperature interval problem.
Using graph theoretic properties, we obtain: (i) a novel, efficiently solvable MILP formulation without big-M constraints and (ii) an improved 3/2-approximation algorithm. 
{Of course, the single temperature interval problem is not immediately applicable to the minimum number of matches problem with multiple temperature intervals. But designing efficient approximation algorithms for the single temperature interval is the first, essential step before considering multiple temperature intervals. Additionally, the water filling heuristics introduced in Section \ref{sec:water_filling} repeatedly solve the single temperature interval problem.}

In the single temperature interval problem, a feasible solution can be represented as a bipartite graph $G=(H\cup C, M)$ in which there is a node for each hot stream $i\in H$, a node for each cold stream $j\in C$ and the set $M\subseteq H\times C$ specifies the matches. 
\ref{App:Single_Temperature_Interval} shows the existence of an optimal solution whose graph $G$ does not contain any cycle. 
A connected graph without cycles is a \emph{tree}, so $G$ is a forest consisting of trees.
\ref{App:Single_Temperature_Interval} also shows that the number $v$ of edges in $G$, i.e.\ the number of matches, is related to the number $\ell$ of trees with the equality $v=n+m-\ell$.
Since $n$ and $m$ are input parameters, minimizing the number of matches in a single temperature interval is equivalent to finding a solution whose graph consists of a maximal number $\ell$ of trees.

\subsection{Novel MILP Formulation}
\label{Sec:SingleTemperatureIntervalProblem-MILP}

We propose a novel MILP formulation for the single temperature interval problem.
In an optimal solution without cycles, there can be at most $\min\{n,m\}$ trees.
From a packing perspective, we assume that there are $\min\{n,m\}$ available bins and each stream is placed into exactly one bin.
If a bin is non-empty, then its content corresponds to a tree of the graph.
The objective is to find a feasible solution with a maximum number of bins. 

To formulate the problem as an MILP, we define the set $B=\{1,2,\ldots,\min\{n,m\}\}$ of available bins.
Binary variable $x_b$ is 0 if bin $b\in B$ is empty and 1, otherwise. 
A binary variable $w_{i,b}$ indicates whether hot stream $i\in H$ is placed into bin $b\in B$.
Similarly, a binary variable $z_{j,b}$ specifies whether cold stream $j\in C$ is placed into bin $b\in B$. 
Then, the minimum number of matches problem can be formulated:
{\allowdisplaybreaks
\begin{align}
\text{max} & \sum_{b\in B} x_b \label{Eq:SingleMIP_MaxBins}  \\
& x_b \leq \sum_{i\in H} w_{i,b} & b\in B \label{Eq:SingleMIP_HotBinUsage} \\ 
& x_b \leq \sum_{j\in C} z_{j,b} & b\in B \label{Eq:SingleMIP_ColdBinUsage} \\ 
& \sum_{b\in B} w_{i,b} = 1 & i\in H \label{Eq:SingleMIP_HotAssignment} \\
& \sum_{b\in B} z_{j,b} = 1 & j\in C \label{Eq:SingleMIP_ColdAssignment} \\
& \sum_{i\in H} w_{i,b} \cdot h_i = \sum_{j\in C} z_{j,b} \cdot c_j & b\in B \label{Eq:SingleMIP_BinHeatConservation} \\
& x_b, w_{i,b}, z_{j,b}\in\{0,1\} & b\in B, i\in H, j\in C \label{Eq:SingleMIP_Integrality}
\end{align}
}
Expression (\ref{Eq:SingleMIP_MaxBins}), the objective function, maximizes the number of bins. 
Equations (\ref{Eq:SingleMIP_HotBinUsage}) and (\ref{Eq:SingleMIP_ColdBinUsage}) ensure that a bin is used if there is at least one stream in it.
Equations (\ref{Eq:SingleMIP_HotAssignment}) and (\ref{Eq:SingleMIP_ColdAssignment}) enforce that each stream is assigned to exactly one bin.
Finally, Eqs.\ (\ref{Eq:SingleMIP_BinHeatConservation}) ensure the heat conservation of each bin. 
Note that, unlike the transportation and transshipment models, Eqs.\ (\ref{Eq:SingleMIP_MaxBins})-(\ref{Eq:SingleMIP_BinHeatConservation}) do not use a big-M parameter. {
\ref{App:Water_Filling} formulates the single temperature interval problem \emph{without} heat conservation. Eqs.\ (\ref{Eq:SingleMIPwithoutConservation_MaxBins})-(\ref{Eq:SingleMIPwithoutConservation_Integrality}) are similar to Eqs.\ (\ref{Eq:SingleMIP_MaxBins})-(\ref{Eq:SingleMIP_Integrality}) except (i) they drop constraints (\ref{Eq:SingleMIP_HotBinUsage}) and (ii) equalities (\ref{Eq:SingleMIP_HotAssignment}) \& (\ref{Eq:SingleMIP_BinHeatConservation}) become inequalities (\ref{Eq:SingleMIPwithoutConservation_HotAssignment}) \& (\ref{Eq:SingleMIPwithoutConservation_BinHeatConservation}).
}

\subsection{Improved Approximation Algorithm}
\label{Sec:SingleTemperatureIntervalProblem-Approximation}

\citet{furman:2004} propose a greedy 2-approxi\-mation algorithm for the minimum number of matches problem in a single temperature interval.
We show that their analysis is tight. We also propose an improved, tight $1.5$-approximation algorithm by prioritizing matches with equal heat loads and exploiting graph theoretic properties.

The simple greedy (SG) algorithm considers the hot and the cold streams in non-increasing heat load order \citep{furman:2004}.
Initially, the first hot stream is matched to the first cold stream and an amount $\min\{h_1, c_1\}$ of heat is transferred between them.
Without loss of generality $h_1 > c_1$, which implies that an amount $h_1 - c_1$ of heat load remains to be transferred from $h_1$ to the remaining cold streams.
Subsequently, the algorithm matches $h_1$ to $c_2$, by transferring $\min\{h_1 - c_1, c_2\}$ heat. 
The same procedure repeats with the other streams until all remaining heat load is transferred.



\begin{algorithm}[t] \nonumber
\caption[Simple Greedy (SG)]{Simple Greedy (SG), developed by \cite{furman:2004}, is applicable to one temperature interval only.}
\begin{algorithmic}[1]
\State Sort the streams so that $h_1\geq h_2\geq \ldots\geq h_n$ and $c_1\geq c_2\geq \ldots \geq c_m$.
\State Set $i = 1$ and $j = 1$.
\While {there is remaining heat load to be transferred}
\State Transfer $q_{i,j}=\min\{h_i, c_j\}$ 
\State Set $h_i = h_i - q_{i,j}$ and $c_j = c_j - q_{i,j}$
\State \textbf{if} $h_i = 0$, \textbf{then} set $i = i+1$
\State \textbf{if} $c_j = 0$, \textbf{then} set $j = j+1$
\EndWhile
\end{algorithmic}
\label{Alg:SimpleGreedy}
\end{algorithm}



\citet{furman:2004} show that Algorithm SG is 2-approximate for one temperature interval.
Our new result in Theorem \ref{thm:greedy} shows that this ratio is tight.

\begin{theorem}
\label{thm:greedy}
Algorithm SG achieves an approximation ratio of 2 for the single temperature interval problem and it is tight.
\end{theorem}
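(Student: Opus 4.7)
The plan is to handle the two assertions separately: (i) that Algorithm~SG is $2$-approximate, and (ii) that the factor~$2$ cannot be improved by any sharper analysis of SG. I would prove (i) by a pair of simple counting bounds, and (ii) by exhibiting an explicit family of instances on which $C_{\mathrm{SG}}/C_{\mathrm{OPT}}\to 2$.

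For the upper bound, I would observe that each iteration of the while-loop in Algorithm~\ref{Alg:SimpleGreedy} transfers $\min\{h_i,c_j\}$ and thereby exhausts at least one of the streams $h_i$ or $c_j$, advancing the pointer $i$ or $j$ (or, in the final iteration, both simultaneously). Since both pointers start at~$1$ and the loop terminates only when $i>n$ and $j>m$, the total number of iterations---hence the number of matches produced---is at most $n+m-1$. For the complementary lower bound, every hot and every cold stream must appear in at least one match, so $C_{\mathrm{OPT}}\geq\max\{n,m\}$. Dividing gives $C_{\mathrm{SG}}/C_{\mathrm{OPT}} \leq (n+m-1)/\max\{n,m\} \leq 2$.

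For tightness, I would construct a family $I_N$ parametrised by an integer $N\geq 2$. Fix any irrational $\epsilon\in(0,1)$ and set $n=m=N$, hot supplies $h_1=2+\epsilon$, $h_N=2-\epsilon$, and $h_i=2$ for $2\leq i\leq N-1$, with cold demands $c_j=2$ for every $j$. Heat conservation is satisfied since $\sum_i h_i = \sum_j c_j = 2N$. A direct trace of SG on the sorted data yields the matches $(1,1),(1,2),(2,2),(2,3),\ldots,(N-1,N-1),(N-1,N),(N,N)$, i.e.\ exactly $2N-1$ matches forming a single caterpillar tree, with the only simultaneous exhaustion occurring at the very last match. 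To analyse OPT I would invoke the forest characterisation of optimal single-interval solutions from \ref{App:Single_Temperature_Interval}: every tree of the solution graph has equal hot and cold sums. Because $\epsilon$ is irrational, any tree containing exactly one of $h_1,h_N$ would have an irrational hot sum, which cannot equal an integer cold subset sum; so $h_1$ and $h_N$ must share a tree. The best partition therefore consists of one four-node tree $\{h_1,h_N,c_{j_1},c_{j_2}\}$ (with hot sum $4$ = cold sum $4$) together with $N-2$ singleton matches pairing size-$2$ hots with size-$2$ colds, giving $\ell^*=N-1$ trees and $C_{\mathrm{OPT}}(I_N) = 2N-(N-1)=N+1$. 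Consequently $C_{\mathrm{SG}}(I_N)/C_{\mathrm{OPT}}(I_N)=(2N-1)/(N+1)\to 2$ as $N\to\infty$.

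The main obstacle is the tightness step: ruling out partitions with more than $N-1$ trees. The heart of this step is the subset-sum argument that forces $h_1$ and $h_N$ into a common tree; once this is granted, the counting is immediate. The upper-bound argument is comparatively routine once the two bounds $n+m-1$ and $\max\{n,m\}$ are in place.
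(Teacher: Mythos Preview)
Your upper-bound argument is identical to the paper's: both bound $C_{\mathrm{SG}}\le n+m-1$ by counting pointer advances and bound $C_{\mathrm{OPT}}\ge\max\{n,m\}$ trivially.

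For tightness you take a genuinely different route. The paper uses an integer ``staircase'' instance with $n$ hot streams $h_i=2n+1-i$ and $m=n+1$ cold streams $c_j=2n-j$; there SG zig-zags through all $2n=n+m-1$ possible matches while an optimal solution attains the lower bound $n+1=\max\{n,m\}$, giving ratio $2n/(n+1)\to 2$. Your instance instead uses nearly uniform loads with an irrational perturbation $\epsilon$, and you invoke the tree-balance property (hot sum $=$ cold sum in each tree of the forest) together with a subset-sum/irrationality argument to force $h_1$ and $h_N$ into a common tree, pinning $C_{\mathrm{OPT}}=N+1$. Both constructions are correct and yield the same asymptotic ratio. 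The paper's example is more elementary (all integers, no irrationality trick, and the optimal value is witnessed directly by a simple pairing rather than argued via a subset-sum obstruction); your example, on the other hand, makes explicit and exploits the forest structure of Lemma~\ref{lem:acyclic}, and shows transparently that the factor~$2$ is driven by SG's inability to isolate equal-load pairs---a theme the paper picks up when motivating Algorithm~IG.
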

\begin{omitted_proof}
See \ref{App:Single_Temperature_Interval}.
\end{omitted_proof}

\medskip

Algorithm IG improves Algorithm SG by: (i) matching the pairs of hot and cold streams with equal heat loads and (ii) using the acyclic property in the graph representation of an optimal solution. {
In practice, hot and cold process streams are unlikely to have equal supplies and demands of heat, so discussing equal heat loads is largely a thought experiment. But the updated analysis allows us to claim an improved performance bound on Algorithm SG. Additionally, the notion of matching roughly equivalent supplies and demands inspires the Section \ref{Subsection:LFM} \emph{Largest Fraction Match First} heuristic.
}


\begin{algorithm}[t] \nonumber
\caption[Improved Greedy (IG)]{Improved Greedy (IG) is applicable to one temperature interval only.}
\label{alg:impgreedy}
\begin{algorithmic}[1]
\For {each pair of hot stream $i$ and cold stream $j$ s.t. $h_i=c_j$}
\State Transfer $h_i$ amount of heat load (also equal to $c_j$) between them and remove them.
\EndFor
\State Run Algorithm SG with respect to the remaining streams. 
\end{algorithmic}
\label{Alg:ImprovedGreedy}
\end{algorithm}


\medskip

\begin{theorem}\label{thm:impgreedy}
Algorithm IG achieves an approximation ratio of 1.5 for the single temperature interval problem and it is tight.
\end{theorem}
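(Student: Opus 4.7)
The plan is to establish $\alg_{IG} \leq 1.5\,\opt$ by exploiting the forest representation from Appendix \ref{App:Single_Temperature_Interval} together with the fact that Step 1 of IG strips out every equal-heat pair before SG is invoked, and then to exhibit a parametrized family of instances whose ratio approaches $3/2$. Three preliminary facts are key: (i) every feasible solution admits an acyclic representation so the number of matches equals $v = n + m - \ell$ where $\ell$ is the number of trees, and minimizing $v$ is equivalent to maximizing $\ell$; (ii) the output of SG is itself a forest whose trees are constructed sequentially, each beginning with a fresh hot-cold pair at the heads of the sorted lists and closing at a simultaneous exhaustion; and (iii) the bipartite graph with edge set $\{(i,j) : h_i = c_j\}$ decomposes into a disjoint union of complete bipartite subgraphs (since $h_i = c_j = c_{j'}$ forces $c_j = c_{j'}$), so any maximal matching on it is maximum. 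Consequently IG's Step 1 realizes a maximum matching $p = p^\ast$ of equal-heat pairs, and no equal-heat pair survives in the residual instance.

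The heart of the upper bound is a structural lemma: when SG is applied to an instance free of equal-heat pairs, every tree of its output has at least three nodes. A two-node tree would arise from a single SG match whose fresh $h_i, c_j$ exhaust simultaneously, forcing $h_i = c_j$, contradicting the residual hypothesis. I then bound both sides. Since SG on the residual produces at least one tree, $\alg_{IG} \leq p + (n - p) + (m - p) - 1 = n + m - p - 1$. For the lower bound on $\opt$, I partition an optimal forest into $a$ two-node trees (each an equal-heat pair, so $a \leq p$) and $b$ trees of size at least three; then $2a + 3b \leq n + m$ forces $\ell^\ast \leq (a + n + m)/3$, giving $\opt \geq (2(n+m) - a)/3$. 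Combining produces the chain $\alg_{IG} \leq n + m - p - 1 \leq n + m - p/2 \leq (n + m) - a/2 \leq 1.5\,\opt$, where the second inequality is immediate and the third uses $a \leq p$.

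For tightness, I would construct a family indexed by $k$ with $n = m = 3k$ and no equal-heat pair, so that an optimal forest decomposes into $k$ trees of type $(1\text{h}, 2\text{c})$ and $k$ trees of type $(2\text{h}, 1\text{c})$, giving $\opt = 4k$, while SG produces a single caterpillar tree with $6k - 1$ matches; the ratio then tends to $3/2$ as $k \to \infty$. Concretely one assigns heats tree by tree using relations $h = c_a + c_b$ or $h_1 + h_2 = c$, placing the different trees on widely separated numerical scales so that, when all heats are sorted in non-increasing order, SG's cascade never simultaneously exhausts a hot and a cold stream until the terminal match. The main obstacle is verifying this cascade property: the scales must be tuned so that after each SG step the current leftover is compared to the next fresh stream in a way that yields a non-simultaneous exhaustion, which requires a careful induction on the sorted trace rather than a one-shot check.
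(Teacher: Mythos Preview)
Your upper-bound argument is correct but organised differently from the paper's. The paper invokes Lemma~\ref{Lem:EqualStreams} to reduce, without loss of generality, to an instance containing no equal-heat pair; on such an instance every tree of an optimal forest has at least three nodes, so $\ell^*\le (n+m)/3$, hence $v^*\ge \tfrac{2}{3}(n+m)$, while IG (which is then just SG) yields at most $n+m-1$ matches. You instead keep the original instance and track two parameters, $p$ (the equal pairs matched in Step~1) and $a$ (the two-node trees of a fixed optimal forest), bounding $\alg_{IG}\le n+m-p-1$ and $\opt\ge (2(n+m)-a)/3$ with $a\le p$. Both routes work. Yours avoids the separate reduction lemma but genuinely needs your observation~(iii) that Step~1 realises a \emph{maximum} equal-heat matching (so that $a\le p^\ast=p$); the paper's reduction only needs the residual to be free of equal pairs, i.e.\ maximality. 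Note also that your ``structural lemma'' (SG trees on the residual have $\ge 3$ nodes) is never used: the bound $\alg_{IG}\le n+m-p-1$ follows from the generic SG count of Theorem~\ref{thm:greedy} alone.

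For tightness you give only a sketch and explicitly flag the cascade verification as the remaining obstacle; as written this part is incomplete. The paper closes it with an explicit family: $n$ hot streams with $h_i=4n-2i$ and $2n$ cold streams with $c_j=4n-2j-1$ for $j\le n$ and $c_j=1$ for $j>n$. No equal pair exists (hot heats are even, cold heats are odd), so IG coincides with SG and produces a single tree with $3n-1$ matches, while matching each $h_i$ to $\{c_i,c_{n+i}\}$ gives an optimal solution with $2n$ matches; the ratio $(3n-1)/(2n)\to 3/2$. This concrete construction is considerably simpler than the two-sided $(1\text{h},2\text{c})/(2\text{h},1\text{c})$ scheme you outline and avoids the scale-tuning induction you anticipate.
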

\begin{omitted_proof}
See \ref{App:Single_Temperature_Interval}.
\end{omitted_proof}

\medskip



\section{Maximum Heat Computations with Match Restrictions}
\label{sec:max_heat}
This section discusses computing the maximum heat that can be feasibly exchanged in a minimum number of matches instance. Section \ref{Sec:MaxHeat_2Streams} discusses the specific instance of two streams and thereby reduces the value of big-M parameter $U_{i,j}$. 
Sections \ref{Sec:MaxHeat_SingleInterval} \& \ref{Sec:MaxHeat_MultipleIntervals} generalize Section \ref{Sec:MaxHeat_2Streams} from 2 streams to any number of the candidate matches. Section \ref{Sec:MaxHeat_SingleInterval} is limited to a restricted subset of matches in a single temperature interval. Section \ref{Sec:MaxHeat_MultipleIntervals} calculates the maximum heat that can be feasibly exchanged for the most general case of multiple temperature intervals.
These maximum heat computations are an essential ingredient of our heuristic methods and aim in using a match in the most profitable way. They also answer the feasibility of the minimum number of matches problem.

\subsection{Two Streams and Big-M Parameter Computation}
\label{Sec:MaxHeat_2Streams}

A common way of computing the big-M parameters is setting $U_{i,j}=\min\{h_i,c_j\}$ for each $i\in H$ and $j\in C$. \citet{gundersen:1997} propose a better method for calculating the big-M parameter.
Our novel Greedy Algorithm MHG (Maximum Heat Greedy) obtains tighter $U_{i,j}$ bounds than either the trivial bounds or the \citet{gundersen:1997} bounds by exploiting the transshipment model structure.

Given hot stream $i$ and cold stream $j$, Algorithm MHG computes the maximum amount of heat that can be feasibly exchanged between $i$ and $j$ in any feasible solution.
Algorithm MHG is tight in the sense that there is always a feasible solution where streams $i$ and $j$ exchange exactly $U_{i,j}$ units of heat.
Note that, in addition to $U_{i,j}$, the algorithm computes a value $q_{i,s,j,t}$ of the heat exchanged between each hot stream $i\in H$ in temperature interval $s\in T$ and each cold stream $j\in C$ in temperature interval $t\in T$, so that $\sum_{s,t\in T}q_{i,s,j,t}=U_{i,j}$.
These $q_{i,s,j,t}$ values are required by greedy packing heuristics in Section \ref{sec:greedy_packing}.

Algorithm \ref{Alg:MaximumHeat} is a pseudocode of Algorithm MHG.
The correctness, i.e.\ the maximality of the heat exchanged between $i$ and $j$, is a corollary of the well known maximum flow - minimum cut theorem.
Initially, the procedure transfers the maximum amount of heat across the same temperature interval; $q_{i,u,s,u}=\min\{\sigma_{i,u},\delta_{j,u}\}$ for each $u\in T$.
The remaining heat is transferred greedily in a top down manner, with respect to the temperature intervals, by accounting heat residual capacities.
For each temperature interval $u\in T$, the heat residual capacity $R_u=\sum_{i=1}^n\sum_{s=1}^u\sigma_{i,s} - \sum_{j=1}^m\sum_{t=1}^u\delta_{j,t}$ imposes an upper bound on the amount of heat that may descend from temperature intervals $1,2,\ldots,u$ to temperature intervals $u+1,u+2,\ldots,k$.

\begin{algorithm}[t] \nonumber
\caption{Maximum Heat Greedy (MHG)}
\textbf{Input:} Hot stream $i\in H$ and cold stream $j\in C$ 
\begin{algorithmic}[1]
\State $\vec{q} \leftarrow \vec{0}$
\For {{$u=1,2,\ldots,k-1$}}
\State {$R_u=\sum_{i=1}^n\sum_{s=1}^u\sigma_{i,s} - \sum_{j=1}^m\sum_{t=1}^u\delta_{j,t}$}
\EndFor
\For {$u=1,2,\ldots,k$}
\State $q_{i,u,j,u}\leftarrow \min\{\sigma_{i,u},\delta_{j,u}\}$
\State $\sigma_{i,u}\leftarrow \sigma_{i,u}-q_{i,u,j,u}$
\State $\delta_{j,u}\leftarrow \delta_{j,u}-q_{i,u,j,u}$
\EndFor
\For {$s=1,2,\ldots,k-1$}
\For {$t=s+1,s+2,\ldots,k$}
\State $q_{i,s,j,t}=\min\{\sigma_{i,s},\delta_{j,t},\min_{s\leq u\leq t-1}\{R_u\}\}$
\State $\sigma_{i,s}\leftarrow \sigma_{i,s}-q_{i,s,j,t}$
\State $\delta_{j,t}\leftarrow \delta_{j,t}-q_{i,s,j,t}$
\For {$u=s,s+1,s+2,\ldots,t-1$}
\State $R_u\leftarrow R_u-q_{i,s,j,t}$
\EndFor
\EndFor
\EndFor
\State Return $\vec{q}$
\end{algorithmic}
\label{Alg:MaximumHeat}
\end{algorithm}

\subsection{Single Temperature Interval}
\label{Sec:MaxHeat_SingleInterval}

Given an instance of the single temperature interval problem and a subset $M$ of matches, the maximum amount of heat that can be feasibly exchanged between the streams using only the matches in $M$ can be computed by solving \ref{EquationSet:SingleMaxHeatLP_initial_stattement}.
{Like the single temperature interval algorithms of Section \ref{sec:single_temperature_interval}, \ref{EquationSet:SingleMaxHeatLP_initial_stattement} is not directly applicable to a minimum number of matches problem with multiple temperature intervals. But \ref{EquationSet:SingleMaxHeatLP_initial_stattement} is an important part of our water filling heuristics.}
For simplicity, \ref{EquationSet:SingleMaxHeatLP_initial_stattement} drops
temperature interval indices for variables $q_{i,j}$.
{\allowdisplaybreaks
\begin{align}
\tag{MaxHeatLP}
\label{EquationSet:SingleMaxHeatLP_initial_stattement}
\begin{aligned}
\text{max} & \sum_{(i,j)\in M} q_{i,j} \\
& \sum_{j\in C} q_{i,j} \leq h_i & i\in H \\
& \sum_{i\in H} q_{i,j} \leq c_j & j\in C \\
& q_{i,j} \geq 0 & i\in H, j\in C
\end{aligned}
\end{align}
}

\subsection{Multiple Temperature Intervals}
\label{Sec:MaxHeat_MultipleIntervals}

Maximizing the heat exchanged through a subset of matches across multiple temperature intervals can solved with an LP that generalizes \ref{EquationSet:SingleMaxHeatLP_initial_stattement}. 
The generalized LP must satisfy the additional requirement that, after removing a maximum heat exchange, the remaining instance is feasible.
Feasibility is achieved using residual capacity constraints which are essential for the efficiency of greedy packing heuristics (see Section \ref{Subsection:MonotonicGreedyHeuristics}).

Given a set $M$ of matches, let $A(M)$ be the set of quadruples $(i,s,j,t)$ such that a positive amount of heat can be feasibly transferred via the transportation arc with endpoints the nodes $(i,s)$ and $(j,t)$.
The set $A(M)$ does not contain any quadruple $(i,s,j,t)$ with: (i) $s>t$, (ii) $\sigma_{i,s}=0$, (iii) $\delta_{j,t}=0$, or (iv) $(i,j)\not\in M$. 
Let $V^H(M)$ and $V^C(M)$ be the set of transportation vertices $(i,s)$ and $(j,t)$, respectively, that appear in $A(M)$.
Similarly, given two fixed vertices $(i,s)\in V^H(M)$ and $(j,t)\in V^C(M)$, we define the sets $V_{i,s}^C(M)$ and $V_{j,t}^H(M)$ of their respective neighbors in $A(M)$.

Consider a temperature interval $u\in T$.
We define by $A_u(M)\subseteq A(M)$ the subset of quadruples with $s\leq u< t$, for $u\in T$. 
The total heat transferred via the arcs in $A_u(M)$ must be upper bounded by $R_u=\sum_{i=1}^n\sum_{s=1}^u\sigma_{i,s} - \sum_{j=1}^m\sum_{t=1}^u\delta_{j,t}$.
Furthermore, $A(M)$ eliminates any quadruple $(i,s,j,t)$ with $R_u=0$, for some $s\leq u<t$.  
Finally, we denote by $T(M)$ the subset of temperature intervals affected by the matches in $M$, i.e.\ if $u\in T(M)$, then there exists a quadruple $(i,s,j,t)\in A(M)$, with $s\leq u<t$.
The procedure $MHLP(M)$ is based on solving the following LP:
{\allowdisplaybreaks
\begin{align}
\max & \sum_{(i,s,j,t) \in A(M)} q_{i,s,j,t} \label{Eq:MaxHeatLP_Objective} \\ 
& \sum_{(j,t)\in V_{i,s}^C(M)} q_{i,s,j,t} \leq \sigma_{i,s} & (i,s)\in V^H(M) \label{Eq:MaxHeatLP_HotStreamSupply}\\
& \sum_{(i,s)\in V_{j,t}^H(M)} q_{i,s,j,t} \leq \delta_{j,t} & (j,t)\in V^C(M) \label{Eq:MaxHeatLP_ColdStreamDemand}\\
& \sum_{(i,s,j,t)\in A_u(M)} q_{i,s,j,t}\leq R_u & u\in T(M) \label{Eq:MaxHeatLP_ResidualCapacity}\\
& q_{i,s,j,t}\geq 0 & (i,s,j,t) \in A(M) \label{Eq:MaxHeatLP_Positiveness}
\end{align}
}
Expression (\ref{Eq:MaxHeatLP_Objective}) maximizes the total exchanged heat by using only the matches in $M$.
Constraints (\ref{Eq:MaxHeatLP_HotStreamSupply}) and (\ref{Eq:MaxHeatLP_ColdStreamDemand}) ensure that each stream uses only part of its available heat.
Constraints (\ref{Eq:MaxHeatLP_ResidualCapacity}) enforce the heat residual capacities.


\section{Relaxation Rounding Heuristics}
\label{sec:relaxation_rounding}
This section investigates relaxation rounding heuristics for the minimum number of matches problem. 
These heuristics begin by optimizing an efficiently-solvable relaxation of the original MILP. 
The efficiently-solvable relaxation allows violation of certain constraints, so that the optimal solution(s) is (are) typically infeasible in the original MILP.
The resulting infeasible solutions are subsequently rounded to feasible solutions for the original MILP.
We consider 3 types of relaxations.
Section \ref{Subsection:FLPR} relaxes the integrality constraints and proposes fractional LP rounding.
Section \ref{Subsection:LRR} relaxes the big-M constraints, i.e.\ Eqs.\ (\ref{TransportationMIP_Eq:BigM_Constraint}), 
and uses Lagrangian relaxation rounding.
Section \ref{Subsection:CRR} relaxes the heat conservation equations, i.e.\ Eqs.\ (\ref{TransportationMIP_Eq:HotStreamConservation})-(\ref{TransportationMIP_Eq:ColdStreamConservation}), 
and takes an approach based on covering relaxations.
{Figure \ref{Figure:RR_Ingredients} shows the main components of relaxation rounding heuristics.}


\begin{figure}
\begin{center}
\includegraphics{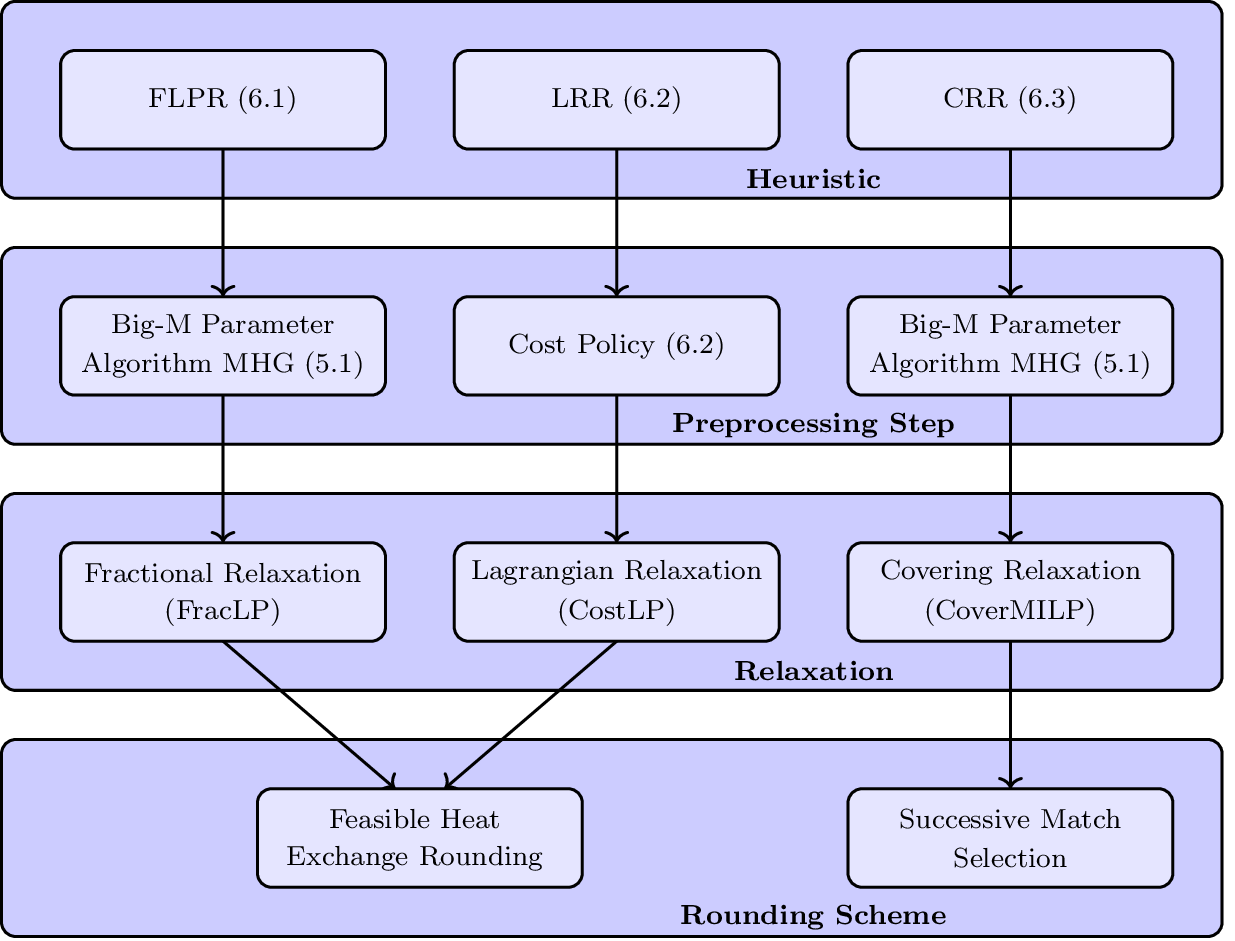}
\caption{
The main components of relaxation rounding heuristics are (i) a preprocessing step, (ii) a relaxation, and (iii) a rounding scheme.
The preprocessing step constructs the relaxation.
Fractional relaxation and covering relaxation require big-M parameter computations, while Lagrangian relaxation minimum cost LP requires cost calculations.
FLPR and LRR compute a feasible heat exchange between all streams, i.e.\ values to variables $q_{i,s,j,t}$, by solving their respective relaxations and round the relaxed solutions according to Algorithm \ref{Alg:FLPR}.
Heuristic CRR adds matches incrementally until it ends up with a feasible solution.
Feasibility is determined using the maximum heat LP in Section \ref{Sec:MaxHeat_MultipleIntervals}.}
\label{Figure:RR_Ingredients}
\end{center}
\end{figure}

\subsection{Fractional LP Rounding}
\label{Subsection:FLPR}


The LP rounding heuristic, originally proposed by \citet{furman:2004},
transforms an optimal fractional solution for the transportation MILP to a feasible integral solution.
We show that the fractional LP can be solved efficiently via network flow techniques. 
We observe that, in the worst case, the heuristic produces a weak solution if it starts with an arbitrary optimal solution of the fractional LP. 
We derive a novel performance guarantee showing that the heuristic is efficient when 
the heat of each chosen match $(i,j)$ is close to big-M parameter $U_{i,j}$, in the optimal fractional solution.

Consider the fractional LP obtained by replacing the integrality constraints $y_{i,j}\in\{0,1\}$ of the transportation MILP, i.e.\ Eqs.\ (\ref{TransportationMIP_Eq:ObjMinMatches})-(\ref{TransportationMIP_Eq:IntegralityConstraints}), with the constraints $0\leq y_{i,j}\leq 1$, for each $i\in H$ and $j\in C$:
%
\begin{align}
\tag{FracLP}
\label{EquationSet:FracLP}
\begin{aligned}
\min & \sum_{i \in H}\sum_{j \in C} y_{i,j} & \\ 
& \sum_{j\in C}\sum_{t\in T} q_{i,s,j,t} = \sigma_{i,s} & i\in H, s\in T \\ 
& \sum_{i\in H}\sum_{s\in T} q_{i,s,j,t} = \delta_{j,t} & j\in C, t\in T  \\ 
& \sum_{s,t\in T} q_{i,s,j,t}\leq U_{i,j}\cdot y_{i,j} & i\in H, j\in C  \\ 
& q_{i,s,j,t} = 0 & \;\; i\in H, j\in C, s,t\in T: s\leq t \\ 
& 0\leq y_{i,j}\leq 1,\; q_{i,s,j,t}\geq 0 & i\in H, j\in C,\; s,t\in T
\end{aligned}
\end{align}


\ref{EquationSet:FracLP} can be solved via minimum cost flow methods.
Figure \ref{Fig:min_cost_flow_network} illustrates a network $N$, i.e.\ a minimum cost flow problem instance, such that finding a minimum cost flow in $N$ is equivalent to optimizing the fractional LP.
Network $N$ is a layered graph with six layers of nodes: (i) a source node $S$, (ii) a node for each hot stream $i\in H$, (iii) a node for each pair $(i,s)$ of hot stream $i\in H$ and temperature interval $s\in T$, (iv) a node for each pair $(j,t)$ for each cold stream $j\in C$ and temperature interval $t\in T$, (v) a node for each cold stream $j\in C$, and (vi) a destination node $D$.
We add: (i) the arc $(S,i)$ with capacity $h_i$ for each $i\in H$, (ii) the arc $(i,(i,s))$ with capacity $\sigma_{i,s}$ for each $i\in H$ and $s\in T$, (iii) the arc $((i,s),(j,t))$ with infinite capacity for each $i\in H$, $j\in C$ and $s,t\in T$, (iv) the arc $((j,t),j)$ with capacity $\delta_{j,t}$ for each $j\in H$ and $t\in T$, and (v) the arc $(j,D)$ with capacity $c_j$ for each $j\in C$.  
Each arc $((i,s),(j,t))$ has cost $1/U_{i,j}$ for $i\in H$, $j\in C$ and $s,t\in T$. 
Every other arc has zero cost.
Any flow of cost $\sum_i h_i$ on network $N$ is equivalent to a feasible solution for \ref{EquationSet:FracLP} with the same cost and vice versa.

\begin{figure}[t]

\begin{center}
\includegraphics{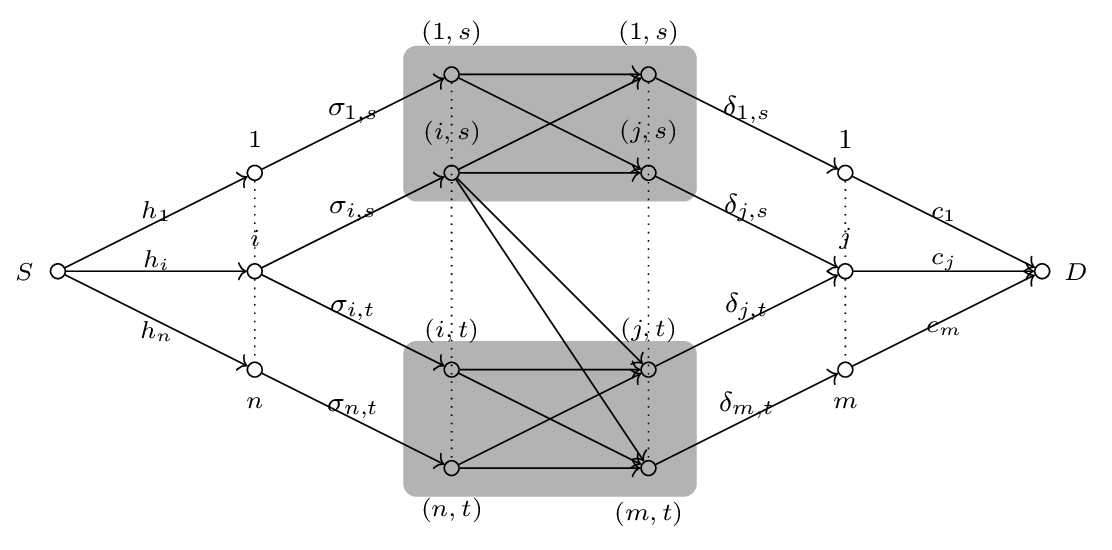}
\caption{Minimum cost network flow formulation of \ref{EquationSet:FracLP}.
The heat is modeled as flow transferred from a source node $S$ to a destination node $D$.
All finite capacities are labelled above the corresponding arcs.
The cost is incurred in each arc between node $(i,s)\in H\times T$ and node $(j,t)\in C\times T$ under the condition that heat flows to the same or a lower temperature interval.
}
\label{Fig:min_cost_flow_network}
\end{center}

\end{figure}

\cite{furman:2004} observe that any feasible solution of \ref{EquationSet:FracLP} can be rounded to a feasible solution of the original problem via Algorithm \ref{Alg:FLPR}, a simple greedy procedure that we call FLPR. 
Given a problem instance $I$, the procedure $FractionalLP(I)$ computes an optimal solution of \ref{EquationSet:FracLP}.
We denote by $(\vec{y}^f,\vec{q}^f)$ the optimal fractional solution.

\begin{algorithm}[t] \nonumber
\caption{Fractional LP Rounding (FLPR) \citep{furman:2004}}
\begin{algorithmic}[1]
\State $(\vec{y}^f,\vec{q}^f)\leftarrow FractionalLP(I)$
\Comment{{(\ref{EquationSet:FracLP}) solving, Section \ref{Subsection:FLPR}}}
\State $\vec{q} \leftarrow \vec{q}^f$
\For {each $i\in H$ and $j\in C$}
\If {$\sum_{s,t\in T} q_{i,s,j,t}>0$}
\State $y_{i,j}\leftarrow 1$
\Else
\State $y_{i,j}\leftarrow 0$
\EndIf
\EndFor
\State Return $(\vec{y},\vec{q})$
\end{algorithmic}
\label{Alg:FLPR}
\end{algorithm}

An inherent drawback of the \citet{furman:2004} approach is the existence of optimal fractional solutions with unnecessary matches.
Theorem \ref{Thm:FLPR_negative} shows that Algorithm FLPR performance is bad in the worst case, even for instances with a single temperature interval. 
The proof, given in \ref{App:Relaxation_Rounding}, can be extended so that unnecessary matches occur across multiple temperature intervals.
\begin{theorem}
\label{Thm:FLPR_negative}
Algorithm FLPR is $\Omega(n)$-approximate.
\end{theorem}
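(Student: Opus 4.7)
The plan is to exhibit a family of single-temperature-interval instances on which \ref{EquationSet:FracLP} admits an optimal fractional solution whose support contains $\Omega(nm)$ pairs, even though the integer optimum needs only $O(n)$ matches. Because Algorithm FLPR sets $y_{i,j}=1$ for every pair with positive flow in the fractional solution it receives, handing it such a ``spread-out'' optimum forces a solution whose objective is an $\Omega(n)$ factor above the optimum.

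Concretely, I would take $n$ hot streams and $n$ cold streams, all living in a single temperature interval, with $h_i = 1$ for each $i\in H$ and $c_j = 1$ for each $j\in C$. Then $U_{i,j}=\min\{h_i,c_j\}=1$ for every pair. The integer optimum is clearly $n$: each hot stream must appear in at least one match, and the identity pairing $q_{i,i}=1$, $y_{i,i}=1$ realizes this bound. For the fractional side, consider $q^f_{i,j}=1/n$ and $y^f_{i,j}=1/n$ for all $(i,j)\in H\times C$. Heat conservation (\ref{TransportationMIP_Eq:HotStreamConservation})--(\ref{TransportationMIP_Eq:ColdStreamConservation}) holds since $\sum_j q^f_{i,j}=1=h_i$ and $\sum_i q^f_{i,j}=1=c_j$; the big-M constraint (\ref{TransportationMIP_Eq:BigM_Constraint}) holds with equality, $q^f_{i,j}=U_{i,j}\cdot y^f_{i,j}$; and the objective value is $\sum_{i,j}y^f_{i,j}=n^2\cdot \tfrac{1}{n}=n$. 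Since the LP relaxation can never beat the integer optimum, this fractional solution is optimal.

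Now I would invoke the fact that \ref{EquationSet:FracLP} has multiple optima in this instance, and the specification of Algorithm FLPR leaves the choice of fractional optimum to the LP solver. Feeding it $(\vec{y}^f,\vec{q}^f)$ defined above, the rounding loop in Algorithm \ref{Alg:FLPR} encounters $\sum_{s,t}q^f_{i,s,j,t}=1/n>0$ for every pair $(i,j)$ and therefore sets $y_{i,j}=1$ for all $n^2$ pairs. Hence the algorithm returns a solution with $C_{\mathrm{FLPR}}(I)=n^2$ matches, while $C_{\mathrm{OPT}}(I)=n$, so
\begin{equation*}
\frac{C_{\mathrm{FLPR}}(I)}{C_{\mathrm{OPT}}(I)} \;=\; \frac{n^2}{n} \;=\; n,
\end{equation*}
which establishes the claimed $\Omega(n)$ lower bound on the approximation ratio. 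As noted in the theorem's discussion, the construction extends to multiple temperature intervals by embedding the same bipartite heat pattern inside one interval and padding the remaining intervals with matched dummy supply/demand that does not interfere with the uniform fractional optimum.

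The main obstacle is the multiplicity of fractional optima: one must argue that Algorithm FLPR is legitimately allowed to receive the pathological uniform solution rather than the identity-matching one. I would address this by recalling that the worst-case ratio of a heuristic is defined over the set of possible executions, and any off-the-shelf LP solver can return a vertex or interior solution producing the uniform pattern when ties are broken adversarially; the statement of Algorithm \ref{Alg:FLPR} does not disambiguate which LP optimum is used, so the bound is intrinsic to the procedure as specified.
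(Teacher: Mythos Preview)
Your construction is essentially the paper's own: the paper takes $h_i=c_j=n$ with $q^f_{i,j}=1$, which is just your instance rescaled by a factor of $n$, and then observes that FLPR rounds all $n^2$ pairs up while the integer optimum is $n$.

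One correction is needed in your optimality argument for the uniform fractional solution. You write that ``the LP relaxation can never beat the integer optimum'' and conclude that the value-$n$ fractional point is optimal, but for a minimization relaxation this inequality goes the wrong way: one has LP opt $\leq$ IP opt $= n$, so a feasible LP point with value $n$ is not automatically optimal. The correct lower bound comes directly from the big-M constraints: with $U_{i,j}=1$ one has $y_{i,j}\geq q_{i,j}$ for every pair, hence
\[
\sum_{i,j} y_{i,j} \;\geq\; \sum_{i,j} q_{i,j} \;=\; \sum_i h_i \;=\; n,
\]
so LP opt $\geq n$; together with your feasible point at $n$ this gives LP opt $= n$. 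With this one-line fix the proof is complete and matches the paper's.
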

\begin{omitted_proof}
See \ref{App:Relaxation_Rounding}.
\end{omitted_proof}

\medskip

Consider an optimal fractional solution to \ref{EquationSet:FracLP} and suppose that $M\subseteq H\times C$ is the set of pairs of streams exchanging a positive amount of heat.
For each $(i,j)\in M$, denote by $L_{i,j}$ the heat exchanged between hot stream $i$ and cold stream $j$.
We define: 
\begin{equation*}
\phi(M) = \min_{(i,j)\in M} \left\{ \frac{L_{i,j}}{U_{i,j}} \right\}
\end{equation*}
as the \emph{filling ratio}, which corresponds to the minimum portion of an upper bound $U_{i,j}$ filled with the heat $L_{i,j}$, for some match $(i,j)$.
Given an optimal fractional solution with filling ratio $\phi(M)$, Theorem \ref{Thm:FLPR_positive} obtains a $1/\phi(M)$-approximation ratio for FLPR.

\begin{theorem}
\label{Thm:FLPR_positive}
Given an optimal fractional solution with a set $M$ of matches and filling ratio $\phi(M)$, FLPR produces a $\left(1/\phi(M) \right)$-approximate integral solution.
\end{theorem}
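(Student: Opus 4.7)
The plan is to establish two things: first, that the integral solution returned by FLPR has exactly $|M|$ matches; second, that $|M|$ is bounded above by $(1/\phi(M))\cdot \opt$, where $\opt$ denotes the optimal objective of the transportation MILP.

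The first claim is immediate from inspection of Algorithm \ref{Alg:FLPR}. The routine keeps $\vec{q}=\vec{q}^f$ unchanged and sets $y_{i,j}=1$ exactly on the pairs $(i,j)\in M$ (those with $\sum_{s,t}q^f_{i,s,j,t}>0$) and $y_{i,j}=0$ elsewhere. Feasibility in the transportation MILP is preserved because rounding $y_{i,j}$ upward only relaxes the big-M constraints (\ref{TransportationMIP_Eq:BigM_Constraint}), while the heat-conservation and thermodynamic constraints are unaffected. Therefore the output is feasible with objective value exactly $|M|$.

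For the second claim, I would exploit that \ref{EquationSet:FracLP} is a continuous relaxation of the transportation MILP, so its optimum $\opt_{FracLP}$ lower-bounds the integer optimum $\opt$. The heart of the argument is to lower-bound $\opt_{FracLP}$ in terms of $\phi(M)$ and $|M|$. For every $(i,j)\in M$, constraint (\ref{TransportationMIP_Eq:BigM_Constraint}) forces $y^f_{i,j}\geq L_{i,j}/U_{i,j}\geq \phi(M)$, by definition of the filling ratio. For every $(i,j)\notin M$ we have $L_{i,j}=0$, and optimality of the fractional solution ensures we may take $y^f_{i,j}=0$ (otherwise reducing $y^f_{i,j}$ strictly improves the objective without violating any constraint). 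Summing over all pairs gives
\begin{equation*}
\opt_{FracLP} \;=\; \sum_{(i,j)\in M} y^f_{i,j} \;\geq\; \sum_{(i,j)\in M} \frac{L_{i,j}}{U_{i,j}} \;\geq\; \phi(M)\cdot |M|.
\end{equation*}
Combining with $\opt_{FracLP}\leq \opt$ yields $|M|\leq \opt/\phi(M)$, which is the desired approximation bound.

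There is no real obstacle here; the only subtlety worth flagging in the write-up is the WLOG step that at an optimal fractional solution $y^f_{i,j}$ equals $L_{i,j}/U_{i,j}$ (in particular, vanishes off $M$), which justifies reading off $\opt_{FracLP}$ as $\sum_{(i,j)\in M} y^f_{i,j}$. Everything else is a one-line chain of inequalities driven by the definition of $\phi(M)$ and LP relaxation.
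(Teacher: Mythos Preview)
Your proof is correct and follows essentially the same approach as the paper's own proof. Both arguments bound the FLPR output $|M|$ by using the big-M constraint to get $y^f_{i,j}\ge L_{i,j}/U_{i,j}\ge\phi(M)$ on each match in $M$, then invoke that the fractional LP optimum lower-bounds the integer optimum; your write-up is if anything slightly cleaner, since you apply the per-match bound $y^f_{i,j}\ge\phi(M)$ directly rather than routing through the algebraic identity $1=\tfrac{U_{i,j}}{L_{i,j}}\cdot\tfrac{L_{i,j}}{U_{i,j}}$ as the paper does.
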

\begin{omitted_proof}
See \ref{App:Relaxation_Rounding}.
\end{omitted_proof}

\medskip

In the case where all heat supplies and demands are integers, the integrality of the minimum cost flow polytope and Theorem \ref{Thm:FLPR_positive} imply that FLPR is $U_{\max}$-approximate, where $U_{\max}=\max_{(i,j)\in H\times C}\{U_{i,j}\}$ is the biggest big-M parameter. 
{A corollary of the $L_{i,j} / U_{i,j}$ ratio is that a fractional solution transferring heat $L_{i,j}$ close to capacity $U_{i,j}$ corresponds to a good integral solution. For example, if the optimal fractional solution satisfies $L_{i,j}>0.5 \cdot U_{i,j}$, for every used match $(i,j)$ such that $L_{i,j} \neq 0$, then FLPR gives a 2-approximate integral solution. 
Finally, branch-and-cut repeatedly solves the fractional problem, so our new bound proves the big-M parameter's relevance for exact methods.}
Because performance guarantee of FLPR scales with the big-M parameters $U_{i,j}$, we improve the heuristic performance by computing a small big-M parameter $U_{i,j}$ using Algorithm MHG in Section \ref{Sec:MaxHeat_2Streams}.

\subsection{Lagrangian Relaxation Rounding}
\label{Subsection:LRR}

\cite{furman:2004} design efficient heuristics for the minimum number of matches problem by applying the method of Lagrangian relaxation and relaxing the big-M constraints.
This approach generalizes Algorithm FLPR by approximating the fractional cost of every possible match $(i,j)\in H\times C$ and solving an appropriate LP using these costs.
We present the LP and revisit different ways of approximating the fractional match costs.

In a feasible solution, the fractional cost $\lambda_{i,j}$ of a match $(i,j)$ is the cost incurred per unit of heat transferred via $(i,j)$.
In particular, 
\begin{equation*}
\lambda_{i,j}=
\left\{
	\begin{array}{ll}
		1/L_{i,j},  & \mbox{if $L_{i,j}>0$, and} \\
		0, & \mbox{if $L_{i,j}=0$}
	\end{array}
\right.
\end{equation*}
where $L_{i,j}$ is the heat exchanged via $(i,j)$.
Then, the number of matches can be expressed as $\sum_{i,s,j,t}\lambda_{i,j}\cdot q_{i,s,j,t}$.
\citet{furman:2004} propose a collection of heuristics computing a single cost value for each match $(i,j)$ and constructing a minimum cost solution.
This solution is rounded to a feasible integral solution equivalently to FLPR.

Given a cost vector $\vec{\lambda}$ of the matches, a minimum cost solution is obtained by solving:
\begin{align}
\tag{CostLP}
\label{EquationSet:CostLP}
\begin{aligned}
\text{min} & \sum_{i \in H} \sum_{j \in C} \sum_{s,t\in T} \lambda_{i,j} \cdot q_{i,s,j,t} \\ 
& \sum_{j\in C}\sum_{t\in T} q_{i,s,j,t} = \sigma_{i,s} & i\in H, s\in T \\
& \sum_{i\in H}\sum_{s\in T} q_{i,s,j,t} = \delta_{j,t} & j\in C, t\in T \\
& q_{i,s,j,t}\geq 0 & i\in H, j\in C,\; s,t\in T 
\end{aligned}
\end{align}

A challenge in Lagrangian relaxation rounding is computing a cost $\lambda_{i,j}$ for each hot stream $i\in H$ and cold stream $j\in C$.
We revisit and generalize policies for selecting costs.

\paragraph{Cost Policy 1 (Maximum Heat)} 
Matches that exchange large amounts of heat incur low fractional cost.
This observation motivates selecting $\lambda_{i,j}= 1/U_{i,j}$, for each $(i,j)\in H\times C$, where $U_{i,j}$ is an upper bound on the heat that can be feasibly exchanged between $i$ and $j$.
In this case, Lagrangian relaxation rounding is equivalent to FLPR (Algorithm \ref{Alg:FLPR}).

\paragraph{Cost Policy 2 (Bounds on the Number of Matches)}
This cost selection policy uses lower bounds $\alpha_i$ and $\beta_j$ on the number of matches of hot stream $i\in H$ and cold stream $j\in C$, respectively, in an optimal solution.
Given such lower bounds, at least $\alpha_i$ cost is incurred for the $h_i$ heat units of $i$ and at least $\beta_j$ cost is incurred for the $c_j$ units of $j$.
On average, each heat unit of $i$ is exchanged with cost at least ${\alpha_i}/{h_i}$ and each heat unit of $j$ is exchanged with cost at least ${\beta_j}/{c_j}$.
So, the fractional cost of each match $(i,j)\in H\times C$ can be approximated by setting $\lambda_{i,j}={\alpha_i}/{h_i}$, $\lambda_{i,j}={\beta_j}/{c_j}$ or $\lambda_{i,j}=\frac{1}{2}(\frac{\alpha_i}{h_i} + \frac{\beta_j}{c_j})$.

\citet{furman:2004} use lower bounds $\alpha_i=1$ and $\beta_j=1$, for each $i\in H$ and $j\in C$.
We show that, for any choice of lower bounds $\alpha_i$ and $\beta_j$, this cost policy for selecting $\lambda_{i,j}$ is not effective. Even when $\alpha_i$ and $\beta_j$ are tighter than 1, all feasible solutions of \ref{EquationSet:CostLP} attain the same cost. 
Consider any feasible solution $(\vec{y},\vec{q})$ and the fractional cost $\lambda_{i,j}=\alpha_i / h_i$ for each $(i,j)\in H\times C$.
Then the cost of $(\vec{y},\vec{q})$ in \ref{EquationSet:CostLP} is:
\begin{equation*}
\sum_{i\in H} \sum_{j\in C} \sum_{s,t\in T} \lambda_{i,j} \cdot q_{i,s,j,t} = 
\sum_{i\in H} \sum_{j\in C} \sum_{s,t\in T} \frac{\alpha_i}{h_i} \cdot q_{i,s,j,t} = 
\sum_{i\in H} \alpha_i.
\end{equation*}
Since every feasible solution in (\ref{EquationSet:CostLP}) has cost $\sum_{i\in H} \alpha_i$, Lagrangian relaxation rounding returns an arbitrary solution.
Similarly, if $\lambda_{i,j}={\beta_j}/{c_j}$ for $(i,j)\in H\times C$, every feasible solution has cost $\sum_{j\in C}\beta_j$.
If $\lambda_{i,j}=\frac{1}{2}(\frac{\alpha_i}{h_i} + \frac{\beta_j}{c_j})$, all feasible solutions have the same cost $1/2 \cdot (\sum_{i\in H} \alpha_i + \sum_{j\in C}\beta_j)$.


\paragraph{Cost Policy 3 (Existing Solution)}
This method of computing costs uses an existing solution.
The main idea is to use the actual fractional costs for the solution's matches and a non-zero cost for every unmatched streams pair. 
A minimum cost solution with respect to these costs may improve the initial solution.
Suppose that $M$ is the set of matches in the initial solution and let $L_{i,j}$ be the heat exchanged via $(i,j) \in M$.
Furthermore, let $U_{i,j}$ be an upper bound on the heat exchanged between $i$ and $j$ in any feasible solution.
Then, a possible selection of costs is $\lambda_{i,j}= 1/L_{i,j}$ if $(i,j)\in M$, and $\lambda_{i,j}=1/U_{i,j}$ otherwise.


\subsection{Covering Relaxation Rounding}
\label{Subsection:CRR}

This section proposes a novel covering relaxation rounding heuristic for the minimum number of matches problem.
The efficiency of Algorithm FLPR depends on lower bounding the unitary cost of the heat transferred via each match. 
The goal of the covering relaxation is to use these costs and lower bound the number of matches in a stream-to-stream to basis by relaxing heat conservation.
The heuristic constructs a feasible integral solution by solving successively instances of the covering relaxation.

Consider a feasible MILP solution and suppose that $M$ is the set of matches.
For each hot stream $i\in H$ and cold stream $j\in C$, denote by $C_i(M)$ and $H_j(M)$ the subsets of cold and hot streams matched with $i$ and $j$, respectively, in $M$.
Moreover, let $U_{i,j}$ be an upper bound on the heat that can be feasibly exchanged between $i\in H$ and $j\in C$.
Since the solution is feasible, it must be true that $\sum_{j\in C_i(M)}U_{i,j}\geq h_i$ and $\sum_{i\in H_j(M)}U_{i,j}\geq c_j$.
These inequalities are necessary, though not sufficient, feasibility conditions. 
By minimizing the number of matches while ensuring these conditions, we obtain a covering relaxation:
\begin{align}
\tag{CoverMILP}
\label{EquationSet:CoverMILP}
\begin{aligned}
\text{min} & \sum_{i \in H}\sum_{j \in C} y_{i,j} \\ 
& \sum_{j\in C} y_{i,j}\cdot U_{i,j} \geq h_i & i\in H \\
& \sum_{i\in H} y_{i,j}\cdot U_{i,j} \geq c_j & j\in C \\
& y_{i,j}\in\{0,1\} & i\in H, j\in C
\end{aligned}
\end{align}
In certain cases, the matches of an optimal solution to \ref{EquationSet:CoverMILP} overlap well with the matches in a near-optimal solution for the original problem. 
Our new Covering Relaxation Rounding (CRR) heuristic for the minimum number of matches problem successively solves instances of the covering relaxation \ref{EquationSet:CoverMILP}.
The heuristic chooses new matches iteratively until it terminates with a feasible set $M$ of matches. 
In the first iteration, Algorithm CRR constructs a feasible solution for the covering relaxation and adds the chosen matches in $M$.
Then, Algorithm CRR computes the maximum heat that can be feasibly exchanged using the matches in $M$ and stores the computed heat exchanges in $\vec{q}$.
In the second iteration, the heuristic performs same steps with respect to the smaller updated instance $(\vec{\sigma}',\vec{\delta}')$, where $\sigma_{i,s}'=\sigma_{i,s}-\sum_{j,t}q_{i,s,j,t}$ and $\delta_{j,t}'=\delta_{j,t}-\sum_{i,s}q_{i,s,j,t}$.
The heuristic terminates when all heat is exchanged.

Algorithm \ref{Alg:CRR} is a pseudocode of heuristic CRR.
Procedure $CoveringRelaxation(\vec{\sigma},\vec{\delta})$ produces an optimal subset of matches for the instance of the covering relaxation in which the heat supplies and demands are specified by the vectors $\vec{\sigma}$ and $\vec{\delta}$, respectively.
Procedure $MHLP(\vec{\sigma},\vec{\delta},M)$ (LP-based Maximum Heat) computes the maximum amount of heat that can be feasibly exchanged by using only the matches in $M$ and is based on solving the LP in Section \ref{Sec:MaxHeat_MultipleIntervals}.

\begin{algorithm}[t] \nonumber
\caption{Covering Relaxation Rounding (CRR)}
\begin{algorithmic}[1]
\State $M\leftarrow \emptyset$
\State $\vec{q} \leftarrow \vec{0}$
\State $r\leftarrow \sum_{i\in H}h_i$
\While {$r>0$}
\State For each $i\in H$ and $s\in T$, set $\sigma_{i,s}' \leftarrow \sigma_{i,s} - \sum_{j\in C} \sum_{t\in T} q_{i,s,j,t}$
\State For each $j\in C$ and $t\in T$, set $\delta_{j,t}' \leftarrow \delta_{j,t} - \sum_{i\in H} \sum_{s\in T} q_{i,s,j,t}$
\State $M'\leftarrow CoveringRelaxation(\vec{\sigma}',\vec{\delta}')$
\Comment{{(\ref{EquationSet:CoverMILP}) solving, Section \ref{Subsection:CRR}}}
\State $M\leftarrow M\cup M'$
\State $\vec{q}\leftarrow MHLP(\vec{\sigma},\vec{\delta},M')$
{\Comment{Equations (\ref{Eq:MaxHeatLP_Objective}) - (\ref{Eq:MaxHeatLP_Positiveness}) LP solving, Section \ref{Sec:MaxHeat_MultipleIntervals}}}
\State $r\leftarrow \sum_{i\in H}h_i-\sum_{i\in H}\sum_{j\in C}\sum_{s,t\in T} q_{i,s,j,t}$
\EndWhile
\end{algorithmic}
\label{Alg:CRR}
\end{algorithm}


\section{Water Filling Heuristics}
\label{sec:water_filling}

This section introduces \emph{water filling heuristics} for the minimum number of matches problem.
These heuristics produce a solution iteratively by exchanging the heat in each temperature interval, in a top down manner. 
The water filling heuristics use, in each iteration,
an efficient algorithm for the single temperature interval problem 
(see Section \ref{sec:single_temperature_interval}).


Figure \ref{Fig:Water_Filling} shows the main idea of a \emph{water filling heuristic} for the minimum number of matches problem with multiple temperature intervals.
The problem is solved iteratively in a top-down manner, from the highest to the lowest temperature interval.
Each iteration produces a solution for one temperature interval.
The main components of a water filling heuristic are: (i) a maximum heat procedure which reuses matches from previous iterations and (ii) an efficient single temperature interval algorithm. 

\begin{figure*}[t!]
    \centering

    \begin{subfigure}[t]{0.5\textwidth}
    \centering
	\includegraphics{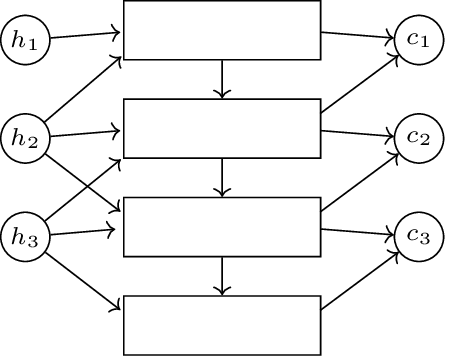}
	\caption{Top Down Temperature Interval Structure}
	\label{Fig:Water_Filling_Temperature_Intervals}
	\end{subfigure}
	\begin{subfigure}[t]{0.45\textwidth}
	\centering
	\includegraphics{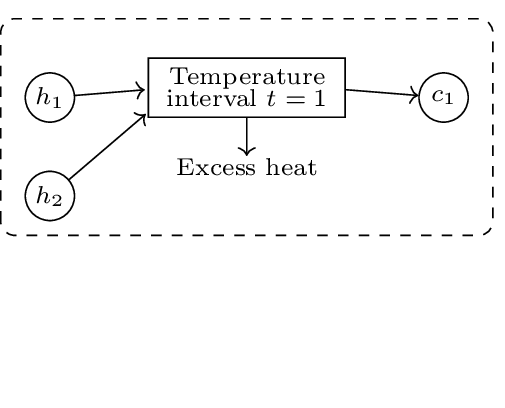}
	\caption{Excess Heat Descending}
	\label{Fig:Water_Filling_Excess_Heat}
	\end{subfigure}
\caption{A water filling heuristic computes a solution by exploiting the top down temperature interval structure and moving from the higher to the lower temperature interval.
In each temperature interval $t$, the heuristic isolates the streams with positive heat at $t$, it matches them and descends the excess heat to the next interval which is sequentially solved. 
\label{Fig:Water_Filling}}

\end{figure*}

Given a set $M$ of matches and an instance $(\vec{\sigma_t},\vec{\delta}_t)$ of the problem in the single temperature interval $t$, the procedure $MHS(\vec{\sigma}_t,\vec{\delta}_t,M)$ (Maximum Heat for Single temperature interval) computes the maximum heat that can be exchanged between the streams in $t$ using only the matches in $M$.
At a given temperature interval $t$, the $MHS$ procedure solves the LP in Section \ref{Sec:MaxHeat_SingleInterval}.
The procedure $SingleTemperatureInterval(\vec{\sigma}_t,\vec{\delta}_t)$ produces an efficient solution for the single temperature interval problem with a minimum number of matches and total heat to satisfy one cold stream. $SingleTemperatureInterval(\vec{\sigma}_t,\vec{\delta}_t)$ either: (i) solves the MILP exactly (Water Filling MILP-based or WFM) or (ii) applies the improved greedy approximation Algorithm IG in Section \ref{sec:single_temperature_interval} (Water Filling Greedy or WFG). 
Both water filling heuristics solve instances of the single temperature interval problem in which there is no heat conservation, i.e.\ the heat supplied by the hot streams is greater or equal than the heat demanded by the cold streams. 
The exact WFM uses the MILP proposed in Eqs.\ (\ref{Eq:SingleMIPwithoutConservation_MaxBins}) -  (\ref{Eq:SingleMIPwithoutConservation_Integrality}) of \ref{App:Water_Filling}.
The greedy heuristic WFG adapts Algorithm IG by terminating when the entire heat demanded by the cold streams has been transferred.
After addressing the single temperature interval, the excess heat descends to the next temperature interval.
Algorithm \ref{Alg:Water_Filling} represents our water filling approach in pseudocode.
{Figure \ref{Figure:Water_Filling_Ingredients} shows the main components of water filling heuristics.}

\begin{figure}
\begin{center}
\includegraphics{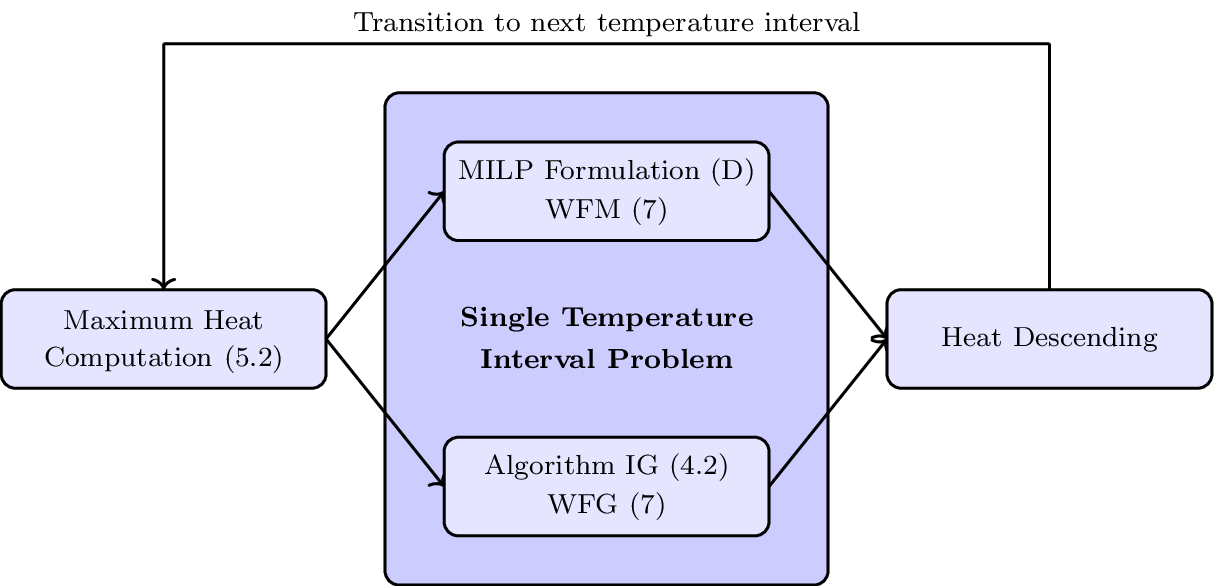}
\caption{
Water filling heuristics solve the temperature intervals serially in a top-down manner and keep composition feasible. 
The main components are (i) a maximum heat computation re-using higher temperature interval matches, (ii) a single temperature interval problem algorithm, and (iii) excess heat descending between consecutive temperature intervals. 
Heuristic WFM uses the \ref{App:Water_Filling} MILP formulation for solving the single temperature interval problem, while heuristic WFG uses the Section \ref{Sec:SingleTemperatureIntervalProblem-Approximation} Algorithm IG.}
\label{Figure:Water_Filling_Ingredients}
\end{center}
\end{figure}

\begin{algorithm}[t] \nonumber
\caption{Water Filling (WF)}\label{alg:water_filling}
\begin{algorithmic}[1]
\State $M \leftarrow \emptyset$
\State $\vec{q} \leftarrow \vec{0}$ 
\For {$t=1,2,\ldots,k$}
\If {$t\neq 1$}
\State $\vec{q}\;' \leftarrow MHS(\vec{\sigma}_t,\vec{\delta}_t,M)$
{\Comment{(\ref{EquationSet:SingleMaxHeatLP_initial_stattement}) solve, Section \ref{Sec:MaxHeat_SingleInterval}}}
\State $\vec{q} \leftarrow \vec{q} + \vec{q}\;'$
\State For each $i\in H$, set $\sigma_{i,t} \leftarrow \sigma_{i,t} - \sum_{j\in C} \sum_{t\in T} q_{i,j,t}'$
\State For each $j\in C$, set $\delta_{j,t} \leftarrow \delta_{j,t} - \sum_{i\in H} \sum_{s\in T} q_{i,j,t}'$
\EndIf
\State $(M',\vec{q}\;') \leftarrow SingleTemperatureInterval(\vec{\sigma}_t,\vec{\delta}_t)$
{\Comment{Eqs (\ref{Eq:SingleMIPwithoutConservation_MaxBins} - \ref{Eq:SingleMIPwithoutConservation_Integrality}) or Alg IG, Sec \ref{Sec:SingleTemperatureIntervalProblem-Approximation}}}
\State $M \leftarrow M\cup M'$
\State $\vec{q} \leftarrow \vec{q} + \vec{q}\;'$
\If {$t\neq k$}
\For {$i\in H$}
\State $\vec{\sigma}_{i,t+1} \leftarrow \vec{\sigma}_{i,t+1} + (\vec{\sigma}_{i,t} - \sum_jq_{i,j,t})$ (excess heat descending)
\EndFor
\EndIf
\EndFor
\end{algorithmic}
\label{Alg:Water_Filling}
\end{algorithm}

%

%

\begin{theorem}
\label{Thm:WaterFillingRatio}
Algorithms WFG and WFM are $\Omega(k)$-approximate.
\end{theorem}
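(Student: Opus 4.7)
The plan is to establish the lower bound by exhibiting a parameterized family of instances $\{I_k\}_{k\geq 2}$ with $k$ temperature intervals for which both WFG and WFM produce $\Omega(k)\cdot\mathrm{OPT}(I_k)$ matches. Since any feasible solution has $\mathrm{OPT}\geq \max\{n,m\}$ while $\mathrm{WF}\leq nm$, any $\Omega(k)$ gap forces $\min\{n,m\}=\Omega(k)$, so I would take $I_k$ to have $n=m=\Theta(k)$ streams and arrange the data so that $\mathrm{OPT}(I_k)=O(k)$ while $\mathrm{WF}(I_k)=\Omega(k^2)$.

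The construction targets two structural weaknesses of water filling: (i) the single-temperature-interval subroutine sees only the residual instance at the current interval and cannot anticipate future demands, and (ii) the procedure $MHS$ of Section \ref{Sec:MaxHeat_SingleInterval} at interval $t$ can only reuse a match $(i,j)\in M$ when hot stream $i$ still has residual heat and cold stream $j$ still has residual demand at $t$. I would calibrate the supplies $\sigma_{i,s}$ and demands $\delta_{j,t}$ so that a global optimum uses a small set of ``persistent'' matches between streams whose supply or demand spans every interval, yielding an $O(k)$-match assignment that I can write down explicitly, whereas at every temperature interval the locally optimal matching returned by Algorithm IG of Section \ref{Sec:SingleTemperatureIntervalProblem-Approximation} (and, crucially, by the MILP of Section \ref{Sec:SingleTemperatureIntervalProblem-MILP}) couples persistent streams with interval-local ``disruptor'' streams. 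These cross-pairings become useless at subsequent intervals because the disruptors have no residual heat or demand; \ref{EquationSet:SingleMaxHeatLP_initial_stattement} therefore cannot reuse them, and $SingleTemperatureInterval$ is forced to add $\Omega(k)$ new matches at each of the $k$ intervals, giving $\Omega(k^2)$ matches overall and ratio $\Omega(k)$.

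The proof then reduces to three steps that I would carry out in order: first, verify heat balance and feasibility of $I_k$; second, exhibit the $O(k)$-match assignment bounding $\mathrm{OPT}(I_k)$ from above; and third, trace Algorithm \ref{Alg:Water_Filling} interval by interval, maintaining the invariant that after the $MHS$ step at interval $t$ the residual instance still requires $\Omega(k)$ new matches from $SingleTemperatureInterval$. The main obstacle will be ensuring that WFM, which solves the single-interval subproblem to optimality, also falls into the trap that WFG does: I would handle this by designing each per-interval subproblem to admit multiple single-interval optima of equal cardinality, for instance by equalizing the heats of disruptor and persistent streams at every interval, so that both the IG algorithm and any optimal solution to Eqs.\ (\ref{Eq:SingleMIP_MaxBins})-(\ref{Eq:SingleMIP_Integrality}) are free to return the adversarial matching. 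A consistent tie-breaking argument then delivers the $\Omega(k)$ bound uniformly for WFG and WFM.
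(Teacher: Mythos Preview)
Your target is right: construct $I_k$ with $n=m=\Theta(k)$, $\mathrm{OPT}(I_k)=\Theta(k)$ and $\mathrm{WF}(I_k)=\Omega(k^2)$, and you correctly flag the tie-breaking issue for WFM. But the ``persistent versus interval-local disruptor'' scheme has a counting contradiction that blocks the plan as stated.

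If each call to $SingleTemperatureInterval$ is to add $\Omega(k)$ \emph{new} matches and those matches couple persistent streams with interval-local disruptors, then each interval needs $\Omega(k)$ distinct disruptor streams on at least one side. Over $k$ intervals this yields $\Omega(k^2)$ disruptor streams, so $n$ or $m$ is $\Omega(k^2)$, contradicting $n=m=\Theta(k)$. Worse, every stream needs at least one match in any feasible solution, so $\mathrm{OPT}\ge \Omega(k^2)$ as well, and the ratio collapses to $O(1)$. Conversely, if you keep $n=m=\Theta(k)$ by allowing only $O(1)$ disruptors per interval, then all but $O(1)$ of the persistent streams must be paired with other persistent streams at each interval; those persistent--persistent matches are reusable at later intervals by $MHS$, so only $O(1)$ new matches are added per interval and the ratio is again $O(1)$. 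Either way the disruptor mechanism cannot produce the $\Omega(k)$ gap.

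The paper's construction avoids this entirely: it uses \emph{no} interval-local streams. With $n=m=k$, hot stream $i$ has unit supply on intervals $1,\ldots,k-i+1$ and cold stream $j$ has unit demand on intervals $j,\ldots,k$ (staggered supports). The optimum matches $i$ with $j=i$ for $k$ matches, while under adversarial tie-breaking WFG and WFM match hot stream $i$ with every cold stream $j\le k-i+1$, giving $\Theta(k^2)$ matches. The failure mode is not that a partner vanishes after one interval, but that the set of streams present shifts from interval to interval, so $MHS$ cannot reuse enough prior matches and the per-interval subroutine repeatedly pairs streams that will not coexist later. You should rebuild the argument around a staggered-support instance of this type rather than disruptors.
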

\begin{omitted_proof}
See \ref{App:Water_Filling}.
\end{omitted_proof}


\section{Greedy Packing Heuristics}
\label{sec:greedy_packing}

This section proposes greedy heuristics motivated by the packing nature of the minimum number of matches problem.
Each greedy packing heuristic starts from an infeasible solution with zero heat transferred between the streams and iterates towards feasibility by greedily selecting matches.
The two main ingredients of such a heuristic are: (i) a match selection policy and (ii) a heat exchange policy for transferring heat via the matches.
Section \ref{Subsection:MonotonicGreedyHeuristics} observes that a greedy heuristic has a poor worst-case performance if heat residual capacities are not considered.
Sections \ref{Subsection:Largest_Heat_Match} - \ref{Subsection:SS} define formally the greedy heuristics: (i) Largest Heat Match First, (ii) Largest Fraction Match First, and (iii) Smallest Stream First. 
{Figure \ref{Figure:Greedy_Packing_Ingredients} shows the main components of greedy packing heuristics.}

\begin{figure}
\begin{center}
\includegraphics{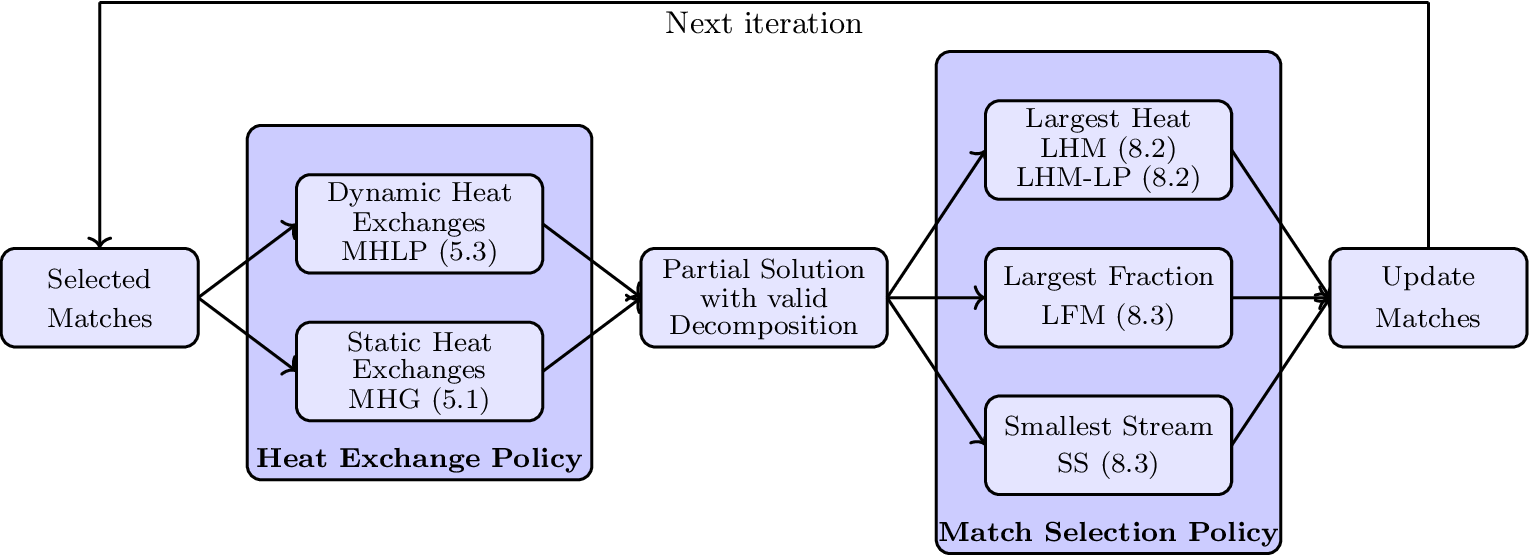}
\caption{
Greedy packing heuristics select matches iteratively one by one. 
The main components of greedy packing heuristics are (i) a heat exchange policy, and (ii) a match selection policy.
Greedy packing heuristics apply these policies with respect to all unmatched stream pairs, in each iteration.
Options for the heat exchange policy include \emph{dynamic heat exchange}, which solves the Section \ref{Sec:MaxHeat_MultipleIntervals} maximum heat LP, and \emph{static heat exchange}, which uses the Section \ref{Sec:MaxHeat_2Streams} greedy algorithm.
Once the heat exchange policy has been applied for every unmatched pair of streams, a match selection policy chooses the new match, e.g.\
(i) with the largest heat (LHM), (ii) with the largest fraction (LFM), or (iii) of the shortest stream (SS).}
\label{Figure:Greedy_Packing_Ingredients}
\end{center}
\end{figure}

\subsection{A Pathological Example and Heat Residual Capacities}
\label{Subsection:MonotonicGreedyHeuristics}

A greedy match selection heuristic is efficient if it performs a small number of iterations and chooses matches exchanging large heat load in each iteration.
Our greedy heuristics perform large moves towards feasibility by choosing good matches in terms of: (i) heat and (ii) stream fraction. 
An efficient greedy heuristic should also be monotonic in the sense that every chosen match achieves a strictly positive increase on the covered instance size.

The Figure \ref{Fig:non_monotonic_heuristic} example shows a pathological behavior of greedy non-monotonic heuristics.
The instance consists of 3 hot streams, 3 cold streams and 3 temperature intervals.
Hot stream $i\in H$ has heat supply $\sigma_{i,s}=1$ for $s=i$ and no supply in any other temperature interval.
Cold stream $j\in C$ has heat demand $\delta_{j,t}=1$ for $t=j$ and no demand in any other temperature interval.
Consider the heuristic which selects a match that may exchange the maximum amount of heat in each iteration.
The matches $(h_1,c_2)$ and $(h_2,c_3)$ consist the initial selections.
In the subsequent iteration, no match increases the heat that can be feasibly exchanged between the streams and the heuristic chooses unnecessary matches.

\begin{figure}[t]

\begin{center}
\includegraphics{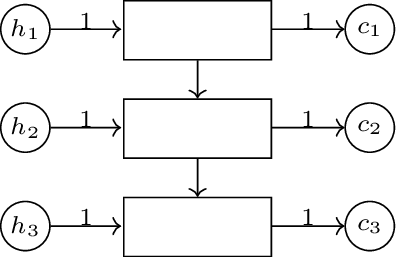}
\caption{A bad example of a non monotonic heuristic. If a heuristic begins by matching $h_1$ with $c_2$ and $h_2$ with $c_3$, then many unnecessary matches might be required to end up with a feasible solution.}
\label{Fig:non_monotonic_heuristic}
\end{center}

\end{figure}

A sufficient condition enforcing strictly monotonic behavior and avoiding the above pathology, is for each algorithm iteration to satisfy the heat residual capacities.
As depicted in Figure \ref{Fig:greedy_decomposition}, a greedy heuristic maintains a set $M$ of selected matches together with a decomposition of the original instance $I$ into two instances $I^A$ and $I^B$.
If $I=(H,C,T,\vec{\sigma},\vec{\delta})$, then it holds that $I^A=(H,C,T,\vec{\sigma}^A,\vec{\delta}^A)$ and $I^B=(H,C,T,\vec{\sigma}^B,\vec{\delta}^B)$, where $\mathbf{\sigma}=\vec{\sigma}^A+\vec{\sigma}^B$ and $\vec{\delta}=\vec{\delta}^A+\vec{\delta}^B$. 
The set $M$ corresponds to a feasible solution for $I^A$ and the instance $I^B$ remains to be solved.
In particular, $I^A$ is obtained by computing a maximal amount of heat exchanged by using the matches in $M$ and $I^B$ is the remaining part of $I$.
Initially, $I^A$ is empty and $I^B$ is exactly the original instance $I$.
A selection of a match increases the total heat exchanged in $I^A$ and reduces it in $I^B$.
\ref{App:Greedy_Packing} observes that a greedy heuristic is monotonic if $I^B$ is feasible in each iteration. 
Furthermore, $I^B$ is feasible if and only if $I^A$ satisfies the heat residual capacities $R_u = \sum_{i\in H}\sum_{s=1}^u\sigma_{i,s} - \sum_{j\in C}\sum_{t=1}^u\delta_{j,t}$, for $u\in T$.

\begin{figure}[t]

\begin{center}
\includegraphics{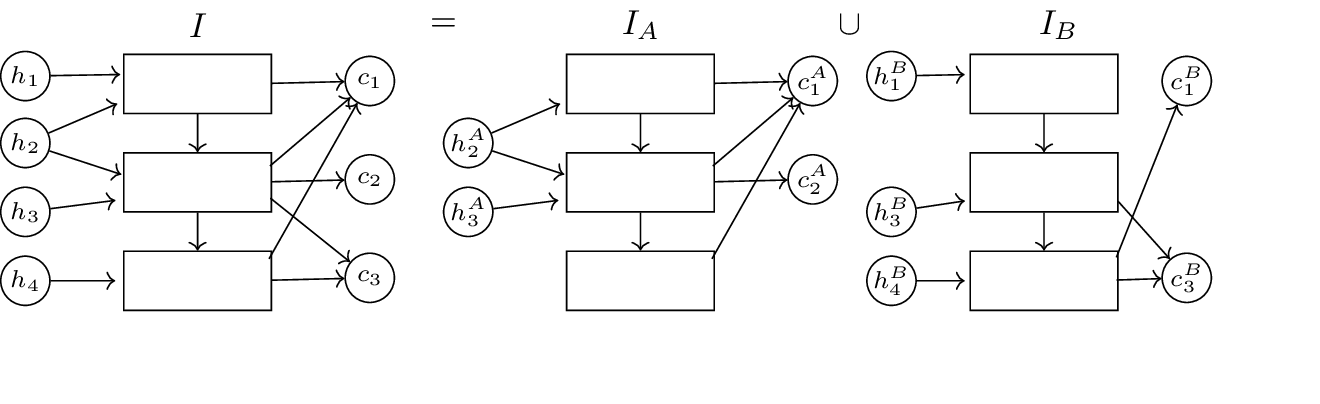}
\vspace{-20pt}
\caption{Decomposition of a greedy packing heuristic. The problem instance $I$ is the union of the instance $I_A$ already solved by the heuristic and the instance $I_B$ that remains to be solved.}
\label{Fig:greedy_decomposition}
\end{center}

\end{figure}

\subsection{Largest Heat Match First}
\label{Subsection:Largest_Heat_Match}

Our Largest Heat Match First heuristics arise from the idea that the matches should individually carry large amounts of heat in a near optimal solution.
Suppose that $Q_v$ is the maximum heat that may be transferred between the streams using only a number $v$ of matches.
Then, minimizing the number of matches is expressed as $\min\{v:Q_v\geq \sum_{i=1}^nh_i\}$.
This observation motivates the greedy packing heuristic which selects matches iteratively until it ends up with a feasible set $M$ of matches exchanging $\sum_{i=1}^nh_i$ units of heat.
In each iteration, the heuristic chooses a match maximizing the additional heat exchanged. 
Our two variants of largest heat matches heuristics are: (i) LP-based Largest Heat Match (LHM-LP) and (ii) Greedy Largest Heat Match (LHM).

Heuristic LHM-LP uses the $MHLP(M)$ (LP-based Maximum Heat) procedure to compute the maximum heat that can be transferred between the streams using only the matches in the set $M$.
This procedure is repeated $O(nm)$ times in each iteration, once for every candidate match, and solves an LP incorporating the proposed heat residual capacities.
Algorithm \ref{Alg:LHM-LP} is an LHM-LP heuristic using the LP in Section \ref{Sec:MaxHeat_MultipleIntervals}.
The algorithm maintains a set $M$ of chosen matches and selects a new match $(i',j')$ to maximize $MHLP(M\cup(i',j'))$.

\begin{algorithm}[t] \nonumber
\caption{Largest Heat Match First LP-based (LHM-LP)}
\begin{algorithmic}[1]
\State $M\leftarrow\emptyset$
\State $r\leftarrow \sum_{i\in H}h_i$
\While {$r>0$}
\State $(i',j')\leftarrow\arg\max_{(i,j)\in H\times C\setminus M} \{MHLP(M\cup\{(i,j)\})\}$
{\Comment{Eqs (\ref{Eq:MaxHeatLP_Objective} - \ref{Eq:MaxHeatLP_Positiveness}), Sec \ref{Sec:MaxHeat_MultipleIntervals}}}
\State $M\leftarrow M\cup \{(i',j')\}$
\State $r\leftarrow\sum_{i\in H}h_i-MHLP(M)$
\EndWhile
\State Return $M$
\end{algorithmic}
\label{Alg:LHM-LP}
\end{algorithm}

\begin{theorem}
\label{Thm:GreedyPackingRatio}
Algorithm LHM-LP is $O(\log n + \log \frac{h_{\max}}{\epsilon})$-approximate, where $\epsilon$ is the required precision.
\end{theorem}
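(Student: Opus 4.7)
The plan is to cast the theorem as a \emph{submodular cover} problem and invoke the standard greedy analysis. Define $f: 2^{H\times C} \to \mathbb{R}_{\ge 0}$ by $f(M) = MHLP(M)$, i.e., the maximum heat exchangeable using only the matches in $M$ (with the residual capacity constraints of Section \ref{Sec:MaxHeat_MultipleIntervals}). Let $H^{*} = \sum_{i\in H} h_i$ denote the target heat. Both the greedy algorithm and any optimal solution terminate with a match set whose $f$-value equals $H^{*}$, so the problem is: find a minimum cardinality $M$ with $f(M) = H^{*}$.

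The first step is to establish that $f$ is monotone and submodular, with $f(\emptyset)=0$. Monotonicity is immediate: enlarging $M$ only relaxes the constraint $q_{i,s,j,t}=0$ for $(i,j)\notin M$, so the LP value weakly increases. Submodularity, i.e., $f(M\cup\{e\}) - f(M) \ge f(M'\cup\{e\}) - f(M')$ whenever $M\subseteq M'$ and $e\notin M'$, follows from the max-flow interpretation of $MHLP$: one builds an extended network on the nodes $V^H \cup V^C$ with a super-source, super-sink, and residual-capacity gadgets enforcing Eqs.~(\ref{Eq:MaxHeatLP_HotStreamSupply})-(\ref{Eq:MaxHeatLP_ResidualCapacity}), in which each match $e=(i,j)$ controls a block of arcs. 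The function mapping an arc-subset to its max-flow value is submodular, so the restriction to match-blocks is submodular too. I would prove this directly by an exchange argument on optimal flows, since the residual capacity constraints make a black-box appeal to classical max-flow results slightly awkward.

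Given monotonicity and submodularity, the greedy analysis proceeds as follows. Let $OPT$ be the optimal number of matches and let $M^{*}$ be an optimal set, so $f(M^{*}) = H^{*}$ and $|M^{*}| \le OPT$. Let $M_k$ denote the greedy set after $k$ iterations. Submodularity yields
\begin{equation*}
H^{*} - f(M_k) \le f(M_k \cup M^{*}) - f(M_k) \le \sum_{e\in M^{*}\setminus M_k} \bigl[ f(M_k\cup\{e\}) - f(M_k) \bigr],
\end{equation*}
so some $e\in M^{*}$ has marginal value at least $(H^{*} - f(M_k))/OPT$. Since LHM-LP picks the match maximizing the marginal gain,
\begin{equation*}
H^{*} - f(M_{k+1}) \le \left(1 - \tfrac{1}{OPT}\right) \bigl( H^{*} - f(M_k) \bigr),
\end{equation*}
and inductively $H^{*} - f(M_k) \le H^{*} \cdot e^{-k/OPT}$.

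It remains to convert this geometric decay into the stated iteration count. The algorithm terminates when the remaining heat $r = H^{*} - f(M_k)$ falls below the target precision $\epsilon$. Solving $H^{*} e^{-k/OPT} < \epsilon$ gives $k \ge OPT \cdot \ln(H^{*}/\epsilon)$. Since $H^{*} = \sum_{i\in H} h_i \le n \cdot h_{\max}$, we obtain $k = O\bigl( OPT \cdot (\log n + \log(h_{\max}/\epsilon)) \bigr)$, which is the desired $O(\log n + \log(h_{\max}/\epsilon))$-approximation ratio. The main technical obstacle is the careful verification that $f$ is submodular in the presence of residual capacity constraints; everything else is a direct instantiation of the classical greedy submodular cover bound of Wolsey.
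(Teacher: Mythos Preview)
Your approach is essentially the paper's, recast in the standard submodular-cover framework. The paper obtains the same key inequality---each greedy step covers at least a $1/v^*$ fraction of the remaining heat---via what it calls a ``simple average argument,'' and then converts it into the stated bound by a harmonic-series summation over $\epsilon$-discretized heat units rather than your geometric-decay estimate $(1-1/OPT)^k$; the two bookkeeping methods are equivalent and yield the same ratio.

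Two small points. First, the paper's $\epsilon$ is the \emph{granularity of the input} (all heats are multiples of $\epsilon$, so every nonzero marginal gain is at least $\epsilon$), not a termination threshold; the algorithm stops at $r=0$, and your last step should be phrased accordingly. Second, you are right that the submodularity of $M\mapsto MHLP(M)$ is the crux: the paper's averaging argument uses exactly this property (the sum of single-match marginals over $M^*$ dominates $f(M\cup M^*)-f(M)$) but neither states nor proves it. Your instinct that the residual-capacity constraints~(\ref{Eq:MaxHeatLP_ResidualCapacity}) complicate a black-box reduction to max-flow submodularity is well placed---those are bundle constraints, not simple arc capacities, so the network construction needs care. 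In short, you have identified the one genuinely nontrivial step that both proofs leave as an exercise.
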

\begin{omitted_proof}
See \ref{App:Greedy_Packing}.
\end{omitted_proof}

\medskip

LHM-LP heuristic is polynomial-time in the worst case.
The $i$-th iteration solves $nm-i+1$ LP instances which sums to solving a total of $\sum_{i=1}^{nm}(nm-i+1)=O(n^2m^2)$ LP instances in the worst case.
However, for large instances, the algorithm is time consuming because of this iterative LP solving.
So, we also propose an alternative, time-efficient greedy approach. 
The new heuristic version builds a solution by selecting matches and deciding the heat exchanges, without modifying them in subsequent iterations.

The new approach for implementing the heuristic, that we call LHM, requires the $MHG(\vec{\sigma},\vec{\delta},i,j)$ procedure. 
Given an instance $(\vec{\sigma},\vec{\delta})$ of the problem, it computes the maximum heat that can be feasibly exchanged between hot stream $i\in H$ and cold stream $j\in C$, as defined in Section \ref{Sec:MaxHeat_2Streams}.
The procedure also computes a corresponding value $q_{i,s,j,t}$ of heat exchanged between $i\in H$ in temperature interval $s\in T$ and $j\in C$ in temperature interval $t\in T$.
LHM maintains a set $M$ of currently chosen matches together with their respective vector $\vec{q}$ of heat exchanges.
In each iteration, it selects the match $(i',j')$ and heat exchanges $q'$ between $i'$ and $j'$ so that the value $MHG(\vec{\sigma},\vec{\delta},i',j')$ is maximum.
Algorithm \ref{Alg:LHM} is a pseudocode of this heuristic.

%

\begin{algorithm}[t] \nonumber
\caption{Largest Heat Match First Greedy (LHM)}
\begin{algorithmic}[1]
\State $M \leftarrow \emptyset$
\State $\vec{q} \leftarrow \vec{0}$
\State $r \leftarrow \sum_{i\in H}h_i$
\While {$r > 0$}
\State $(i', j', \vec{q}\;') \leftarrow \arg \max_{(i,j)\in H\times C\setminus M} \{MHG(\vec{\sigma},\vec{\delta},i,j)\}$
{\Comment{Algorithm MHG, Section \ref{Sec:MaxHeat_2Streams}}}
\State $M \leftarrow M \cup \{(i',j')\}$
\State $\vec{q} \leftarrow \vec{q} + \vec{q}\;'$
\State For each $s\in T$, set $\sigma_{i',s} \leftarrow \sigma_{i',s} - \sum_{t\in T} q_{i',s,j',t}'$
\State For each $t\in T$, set $\delta_{j',t} \leftarrow \delta_{j',t} - \sum_{s\in T} q_{i',s,j',t}'$
\State $r \leftarrow r - \sum_{s,t\in T}q_{i',s,j',t}'$
\EndWhile
\State Return $M$
\end{algorithmic}
\label{Alg:LHM}
\end{algorithm}

\subsection{Largest Fraction Match First}
\label{Subsection:LFM}

The heuristic \emph{Largest Fraction Match First} (LFM) exploits the bipartite nature of the problem by employing matches which exchange large fractions of the stream heats.
Consider a feasible solution with a set $M$ of matches.
Every match $(i,j)\in M$ covers a fraction $\sum_{s,t\in T}\frac{q_{i,s,j,t}}{h_i}$ of hot stream $i\in H$ and a fraction $\sum_{s,t\in T}\frac{q_{i,s,j,t}}{c_j}$ of cold stream $j\in C$.
The total covered fraction of all streams is equal to $\sum_{(i,j)\in M} \sum_{s,t\in T} \left( \frac{q_{i,s,j,t}}{h_i} + \frac{q_{i,s,j,t}}{c_j}\right)=n+m$.
Suppose that $F_v$ is the maximum amount of total stream fraction that can be covered using no more than $v$ matches.
Then, minimizing the number of matches is expressed as $\min\{v:F_v\geq n+m\}$.
Based on this observation, the main idea of LFM heuristic is to construct iteratively a feasible set of matches, by selecting the match covering the largest fraction of streams, in each iteration.
That is, LFM prioritizes proportional matches in a way that high heat hot streams are matched with high heat cold streams and low heat hot streams with low heat cold streams. 
In this sense, it generalizes the idea of Algorithm IG for the single temperature interval problem (see Section \ref{sec:single_temperature_interval}), according to which it is beneficial to match streams of (roughly) equal heat.

An alternative that would be similar to LHM-LP is an
LFM heuristic 
with an $MFLP(M)$ (LP-based Maximum Fraction) procedure computing the maximum fraction of streams that can be covered using only a given set $M$ of matches.
Like the LHM-LP heuristic, this procedure would be based on solving an LP (see \ref{App:Greedy_Packing}), except that the objective function maximizes the total stream fraction.
The LFM heuristic can be also modified to attain more efficient running times using Algorithm $MHG$, as defined in Section \ref{Sec:MaxHeat_2Streams}. 
In each iteration, the heuristic selects the match $(i,j)$ with the highest value $\frac{U_{i,j}'}{h_i}+\frac{U_{i,j}'}{c_j}$, where $U_{i,j}'$ is the maximum heat that can be feasibly exchanged between $i$ and $j$ in the remaining instance.


\subsection{Smallest Stream Heuristic}
\label{Subsection:SS}

Subsequently, we propose \emph{Smallest Stream First} (SS) heuristic based on greedy match selection, which also incorporates stream priorities so that a stream is involved in a small number of matches.
Let $\alpha_i$ and $\beta_j$ be the number of matches of hot stream $i\in H$ and cold stream $j\in C$, respectively.
Minimizing the number of matches problem is expressed as $\min\{\sum_{i\in H}\alpha_i\}$, or equivalently $\min\{\sum_{j\in C}\beta_j\}$.
Based on this observation, we investigate heuristics that specify a certain order of the hot streams and match them one by one, using individually a small number of matches.
Such a heuristic requires: (i) a stream ordering strategy and (ii) a match selection strategy.
To reduce the number of matches of small hot streams, heuristic SS uses the order $h_1\leq h_2\leq \ldots\leq h_n$.  

In each iteration, the next stream is matched with a low number of cold streams using a greedy match selection strategy; we use greedy LHM heuristic.
Observe that SS heuristic is more efficient in terms of running time compared to the other greedy packing heuristics, because it solves a subproblem with only one hot stream in each iteration. 
Algorithm \ref{Alg:ShortestStream} is a pseudocode of SS heuristic. 
Note that other variants of ordered stream heuristics may be obtained in a similar way.
The heuristic uses the $MHG$ algorithm in Section \ref{Sec:MaxHeat_2Streams}.

\begin{algorithm}[t] \nonumber
\caption{Smallest Steam First (SS)}
\begin{algorithmic}[1]
\State Sort the hot streams in non-decreasing order of their heat loads, i.e.\ $h_1\leq h_2\leq\ldots\leq h_n$.
\State $M \leftarrow \emptyset$
\State $\vec{q}\leftarrow \vec{0}$
\For {$i\in H$}
\State $r\leftarrow h_i$
\While {$r>0$}
\State $(i, j', \vec{q}\;') \leftarrow \arg \max_{j\in C} \{MHG(\vec{\sigma},\vec{\delta},i,j)\}$
{\Comment{Algorithm MHG, Section \ref{Sec:MaxHeat_2Streams}}}
\State $M\leftarrow M\cup \{(i,j')\}$
\State $\vec{q} \leftarrow \vec{q} + \vec{q}\;'$
\State For each $s\in T$, set $\sigma_{i,s} \leftarrow \sigma_{i,s} - \sum_{t\in T} q_{i,s,j',t}'$
\State For each $t\in T$, set $\delta_{j',t} \leftarrow \delta_{j',t} - \sum_{s\in T} q_{i,s,j',t}'$
\State $r \leftarrow r - \sum_{s,t\in T}q_{i',s,j',t}'$
\EndWhile
\EndFor
\State Return $M$
\end{algorithmic}
\label{Alg:ShortestStream}
\end{algorithm}



\section{Numerical Results}
\label{sec:results}

This section evaluates the proposed heuristics on three test sets. 
Section \ref{Section:Benchmark_Instances} provides information on system specifications and benchmark instances.
Section \ref{Section:Exact_Experiments} presents computational results of exact methods and shows that commercial, state-of-the-art approaches have difficult solving moderately-sized instances 
to global optimality.
Section \ref{Section:Heuristic_Experiments} evaluates experimentally the heuristic methods and compares the obtained results with those reported by \citet{furman:2004}.
All result tables are provided in \ref{App:Experimental_Results}.
{\cite{source_code} provide test cases and source code for the paper's computational experiments.}

\subsection{System Specification and Benchmark Instances}
\label{Section:Benchmark_Instances}

All computations are run on an Intel Core i7-4790 CPU 3.60GHz with 15.6 GB RAM running 64-bit Ubuntu 14.04.
CPLEX 12.6.3 and Gurobi 6.5.2 solve the minimum number of matches problem exactly. 
The mathematical optimization models and heuristics are implemented in Python 2.7.6 and Pyomo 4.4.1 \citep{hart:2011, hart:2012}. 

We use problem instances from two existing test sets \citep{furman:2004, chen:2015}. {We also generate two collections of larger test cases. The smaller of the two sets uses work of \citet{grossman:2017}. The larger of the two sets was created using our own random generation method.}
An instance of general heat exchanger network design consists of streams and utilities with inlet, outlet temperatures, flow rate heat capacities and other parameters.
\ref{App:Minimum_Utility_Cost} shows how a minimum number of matches instances arises from the original instance of general heat exchanger network design.
 
The \emph{\cite{furman:2000} test set} consists of test cases from the engineering literature. 
Table \ref{Table:Problem_Sizes} reports bibliographic information on the origin of these test cases. 
We manually digitize this data set and make it publicly available for the first time \citep{source_code}.
Table \ref{Table:Problem_Sizes} lists the 26 problem instance names and information on their sizes.
The total number streams and temperature intervals varies from 6 to 38 and from 5 to 32, respectively. 
Table \ref{Table:Problem_Sizes} also lists the number of binary and continuous variables as well as the number of constraints in the transshipment MILP formulation.

The \emph{\cite{minlp,chen:2015} test set} consists of 10 problem instances.
These instances are classified into two categories depending on whether they consist of balanced or unbalanced streams. 
Test cases with balanced streams have flowrate heat capacities in the same order of magnitude, 
while test cases with unbalanced streams have dissimilar flowrate heat capacities spanning several orders of magnitude. 
The sizes of these instances range from 10 to 42 streams and from 12 to 35 temperature intervals. 
Table \ref{Table:Problem_Sizes} reports more information on the size of each test case.

\emph{The \cite{grossman:2017} test set} is generated randomly.
The inlet, outlet temperatures of these instances are fixed while the values of flowrate heat capacities are generated randomly with fixed seeds.
This test set contains 12 moderately challenging problems (see Table \ref{Table:Problem_Sizes}) with a classification into balanced and unbalanced instances, similarly to the \cite{minlp,chen:2015} test set. 
The smallest problem involves 27 streams and 23 temperature intervals while the largest one consists of 43 streams and 37 temperature intervals.

{\emph{The Large Scale test set} is generated randomly. These instances have 80 hot streams, 80 cold streams, 1 hot utility and 1 cold utility.
For each hot stream $i\in HS$, the inlet temperature $T_{\text{in},i}^{HS}$ is chosen uniformly at random in the interval $(30,400]$. 
Then, the outlet temperature $T_{\text{out},i}^{HS}$ is selected uniformly at random in the interval $[30, T_{\text{in},i}^{HS})$.
Analogously, for each cold stream $j\in CS$, the outlet temperature $T_{\text{out},j}^{CS}$ is chosen uniformly at random in the interval $(20,400]$.
Next, the inlet temperature $T_{\text{in},j}^{CS}$ is chosen uniformly at random in the interval $[20,T_{\text{out},j}^{CS})$.
The flow rate heat capacities $FCp_i$ and $FCp_j$ of hot stream $i$ and cold stream $j$ are chosen as floating numbers with two decimal digits in the interval $[0,15]$.
The hot utility has inlet temperature $T_{\text{in}}^{HU}=500$, outlet temperature $T_{\text{out}}^{HS}=499$, and cost $\kappa^{HU}=80$.
The cold utility has inlet temperature $T_{\text{in}}^{CU}=20$, outlet temperature $T_{\text{out}}^{CU}=21$, and cost $\kappa^{CU}=20$.
The minimum heat recovery approach temperature is $\Delta T_{\min}=10$.}

\subsection{Exact Methods} 
\label{Section:Exact_Experiments}

We evaluate exact methods using state-of-the-art commercial approaches. 
For each problem instance, CPLEX and Gurobi solve the Section \ref{sec:preliminaries} transportation and transshipment models.
Based on the difficulty of each test set, we set a time limit for each solver run as follows: (i) 1800 seconds for the \cite{furman:2000} test set, (ii) 7200 seconds for the \cite{minlp,chen:2015} test set, and (iii) 14400 seconds for the \cite{grossman:2017} {and large scale} test sets. 
In each solver run, we set absolute gap 0.99, relative gap $4\%$, and maximum number of threads 1.

Table \ref{Table:Exact_Methods} reports the best found objective value, CPU time and relative gap, for each solver run.
Observe that state-of-the-art approaches cannot, in general, solve moderately-sized problems with 30-40 streams to global optimality.
For example, none of the test cases in the \cite{grossman:2017} {or large scale} test sets is solved to global optimality within the specified time limit. 
Table \ref{Table:Comparisons} contains the results reported by \citet{furman:2004} using CPLEX 7.0 with 7 hour time limit. 
CPLEX 7.0 fails to solve 4 instances to global optimality.
Interestingly, CPLEX 12.6.3 still cannot solve 3 of these 4 instances with a 1.5 hour timeout.

Theoretically, the transshipment MILP is better than the transportation MILP because the former has asymptotically fewer variables.
This observation is validated experimentally with the exception of very few instances, e.g.\ \texttt{balanced10}, in which the transportation model computes a better solution within the time limit.
CPLEX and Gurobi are comparable and neither dominates the other.
Instances with balanced streams are harder to solve, which highlights the difficulty introduced by symmetry, see \cite{kouyialis:2016}. 
{
The preceding numerical analysis refers to the extended transportation MILP.
Table \ref{Table:Transportation_Models_Comparison} compares solver performance to the reduced transportation MILP, i.e.\ a formulation removing redundant variables $q_{i,s,j,t}$ with $s> t$ and Equations (\ref{TransportationMIP_Eq:ThermoConstraint}). Note that modern versions of CPLEX and Gurobi show effectively no difference between the two formulations.
}
  
\subsection{Heuristic Methods}  
\label{Section:Heuristic_Experiments}

{
We implement the proposed heuristics using Python and develop the LP models with Pyomo \citep{hart:2011,hart:2012}.
We use CPLEX 12.6.3 with default settings to solve all LP models within the heuristic methods. \cite{source_code} make the source code available. The following discussion covers the 48 problems with 43 streams or fewer. Section \ref{Section:Large_Scale} discusses the 3 examples with 160 streams each.}

The difficulty of solving the minimum number of matches problem to global optimality motivates the design of heuristic methods and approximation algorithms with proven performance guarantees.
Tables \ref{Table:Heuristic_Upper_Bounds} and \ref{Table:Heuristic_CPU_Times} contain the computed objective value and CPU times, respectively, of the heuristics for all test cases. 
For the challenging \cite{minlp,chen:2015} and \cite{grossman:2017} test sets, heuristic LHM-LP always produces the best solution.
The LHM-LP running time is significantly higher compared to all heuristics due to the iterative LP solving, despite the fact that it is guaranteed to be polynomial in the worst case.
Alternatively, heuristic SS produces the second best heuristic result with very efficient running times in the \cite{minlp,chen:2015} and \cite{grossman:2017} test sets. 
Figure \ref{Fig:Boxplot_Performance_Ratios} depicts the performance ratio of the proposed heuristics using a box and whisker plot, where the computed objective value is normalized with the one found by CPLEX for the transshipment MILP.
Figure \ref{Fig:Boxplot_CPU_Times} shows a box and whisker plot of the CPU times of all heuristics in $\log_{10}$ scale normalized by the minimum CPU time for each test case.
Figure \ref{Fig:Line_Chart} shows a line chart verifying that our greedy packing approach produces better solutions than the relaxation rounding and water filling ones. 

\begin{figure}[!ht]
\begin{center}
\includegraphics[scale=0.6]{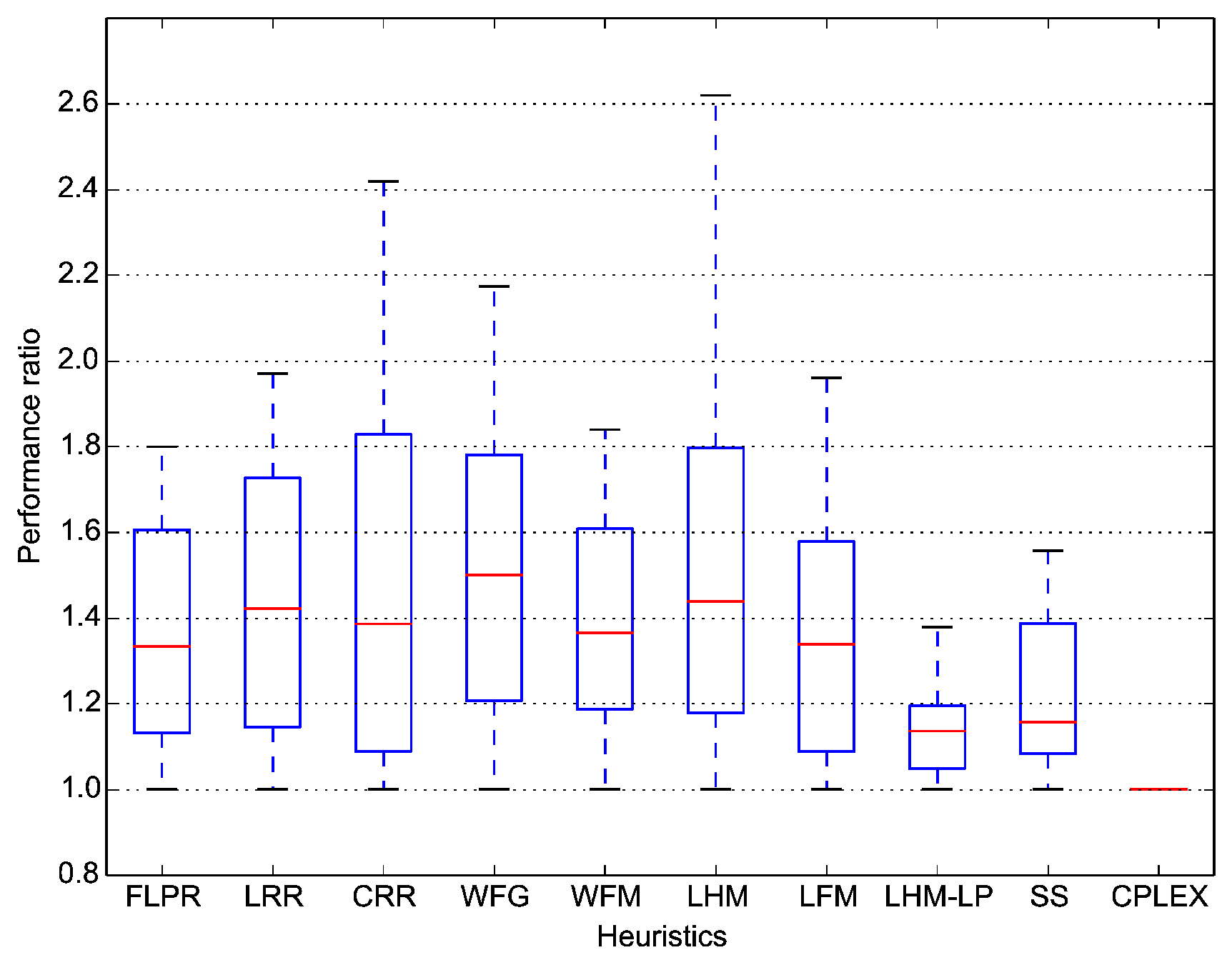}
\end{center}
\vspace{-20pt}
\caption{Box and whisker diagram of {48} heuristic performance ratios, i.e.\ computed solution / best known solution {for the problems with 43 streams or fewer.}}
\label{Fig:Boxplot_Performance_Ratios}
\end{figure}

\begin{figure}[!ht]
\begin{center}
\includegraphics[scale=0.6]{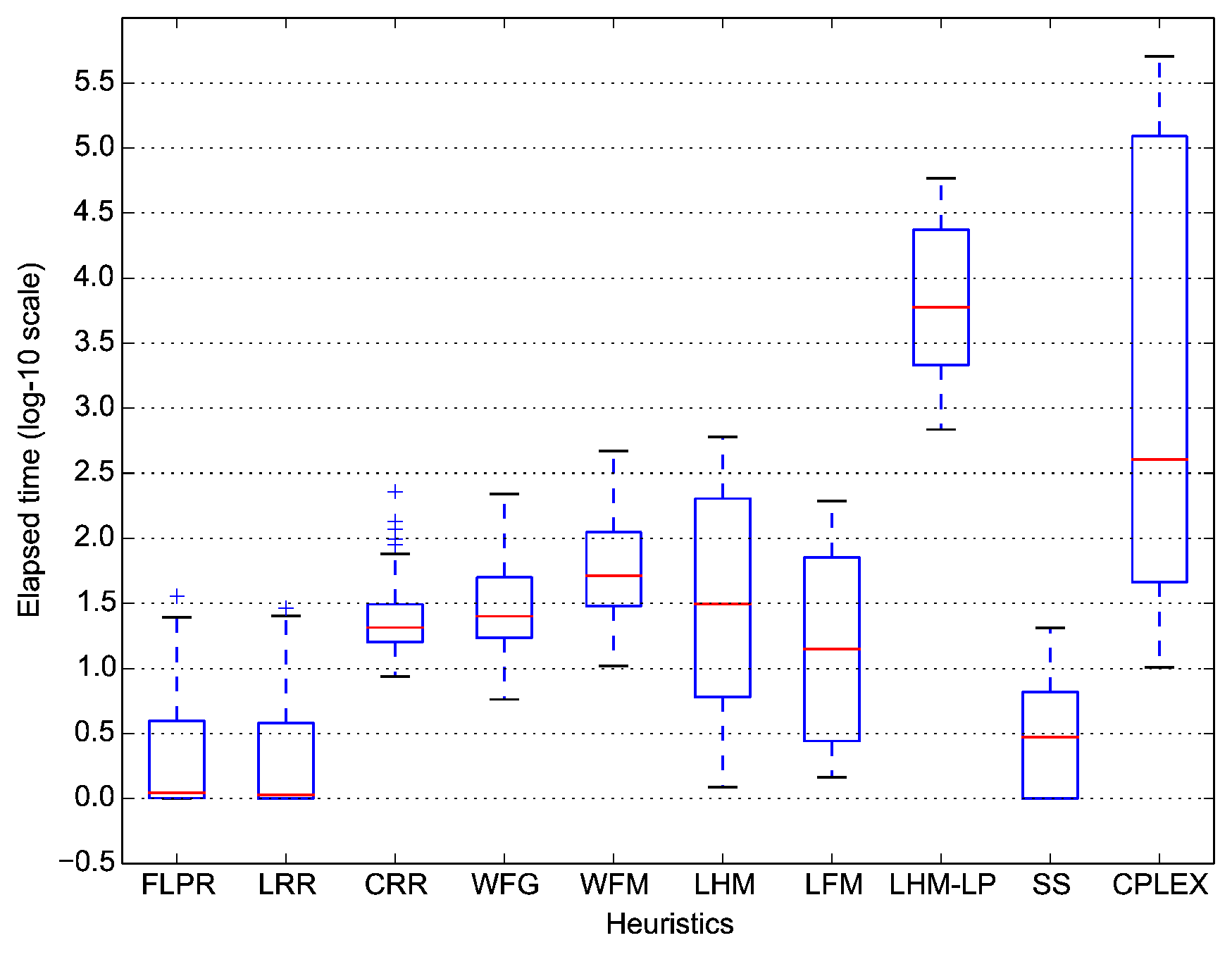}
\end{center}
\vspace{-20pt}
\caption{Box and whisker diagram of {48} CPU times ($\log_{10}$ scale) normalized by the minimum CPU time for each test case.}
\label{Fig:Boxplot_CPU_Times}
\end{figure}

\begin{figure}[!ht] 
\begin{center}
\includegraphics[scale=0.6]{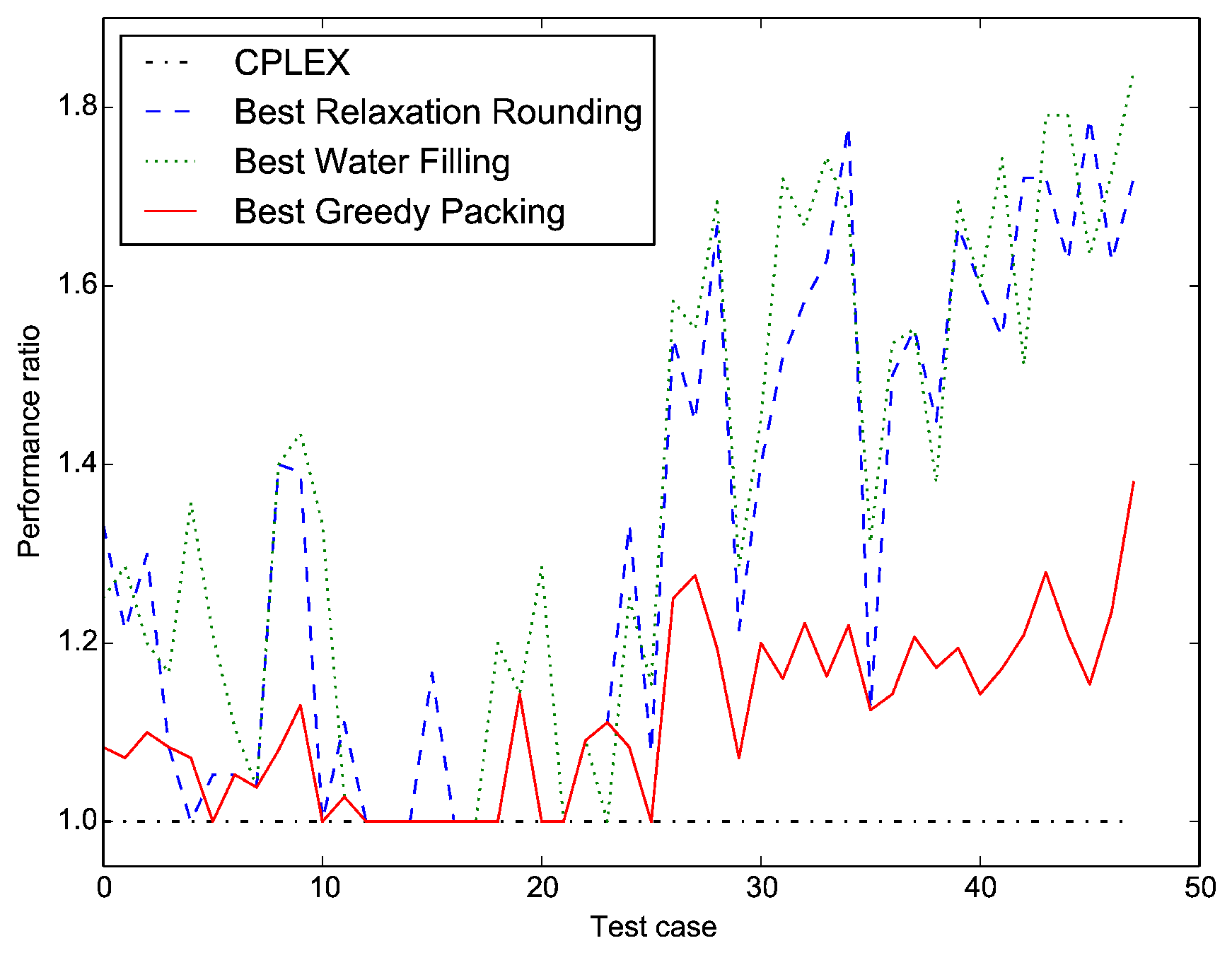}
\end{center}
\vspace{-20pt}
\caption{Line chart comparing the performance ratio, i.e.\ computed solution / best known solution, of the best computed result by heuristic methods: relaxation rounding, water filling, and greedy packing. {This graph applies to the 48 problems with 43 streams or fewer.}}
\label{Fig:Line_Chart}
\end{figure}

Table \ref{Table:Comparisons} contains the heuristic results reported by \citet{furman:2004} and the ones obtained with our improved version of the FLPR, LRR, and WFG heuristics of \cite{furman:2004}.
Our versions of FLPR, LRR, and WFG perform better for the \cite{furman:2004} test set because of our new Algorithm MHG for tightening the big-M parameters.
For example, out of the 26 instances, our version of FLPR performs strictly better than the \cite{furman:2004} version 20 times and worse only once (\texttt{10sp1}). To further explore the effect of the big-M parameter, Table \ref{Table:BigM_Comparisons} shows how different computations for the big-M parameter change the FLPR and LRR performance. Table \ref{Table:BigM_Comparisons} also demonstrates the importance of the big-M parameter on the transportation MILP fractional relaxation quality. 

In particular, Table \ref{Table:BigM_Comparisons} compares the three big-M computation methods discussed in Section \ref{Sec:MaxHeat_2Streams}: (i) the trivial bounds, (ii) the \citet{gundersen:1997} method, and (iii) our greedy Algorithm MHG.
Our greedy maximum heat algorithm dominates the other approaches for computing the big-M parameters. Algorithm MHG also outperforms the other two big-M computation methods by finding smaller feasible solutions via both Fractional LP Rounding and Lagrangian Relaxation Rounding.  In the 48 test cases, Algorithm MHG produces the best FLPR and LRR feasible solutions in 46 and 43 test cases, respectively. Algorithm MHG is strictly best for 33 FLPR and 32 LRR test cases. Finally, Algorithm MHG achieves the tightest fractional MILP relaxation for all test instances.

Figure \ref{Fig:Boxplot_Performance_Ratios} and Table \ref{Table:Heuristic_Upper_Bounds} show that our
new CRR heuristic is competitive with the other relaxation rounding heuristics, performing as well or better than FLPR or LRR in 19 of the 48 test cases and strictly outperforming both FLPR and LRR in 8 test cases.
%
%
Although CRR solves a sequence of MILPs, Figure \ref{Fig:Boxplot_CPU_Times} and Table \ref{Table:Heuristic_CPU_Times} show that its running time is efficient compared to the other relaxation rounding heuristics.

Our water filling heuristics are equivalent to or better than \citet{furman:2004} for 25 of their 26 test set instances (all except \texttt{7sp2}).  
In particular, our Algorithm WFG is strictly better than their WFG in 18 of 26 instances and is worse in just one. This improvement stems from the new 1.5-approximation algorithm for the single temperature interval problem (see Section \ref{Sec:SingleTemperatureIntervalProblem-Approximation}).
The novel Algorithm WFM is competitive with Algorithm WFG and produces equivalent or better feasible solutions for 37 of the 48 test cases. 
In particular, WFM has a better performance ratio than WFG (see Figure \ref{Fig:Boxplot_Performance_Ratios}) and WFM is strictly better than WFG in all but 1 of the \citet{grossman:2017} instances.
The strength of WFM highlights the importance of our new MILP formulation in Eqs.\ (\ref{Eq:SingleMIP_MaxBins})-(\ref{Eq:SingleMIP_Integrality}).
At each iteration, WFM solves an MILP without big-M constraints and therefore has a running time in the same order of magnitude as its greedy counterpart WFG (see Figure \ref{Fig:Boxplot_CPU_Times}).

In summary, our heuristics obtained via the relaxation rounding and water filling methods improve the corresponding ones proposed by \citet{furman:2004}.
Furthermore, greedy packing heuristics achieve even better results in more than $90\%$ of the test cases. 

{

\subsection{Larger Scale Instances}
\label{Section:Large_Scale}

Although CPLEX and Gurobi do not converge to global optimality for many of the 
\cite{furman:2000}, \cite{minlp,chen:2015}, and \cite{grossman:2017} instances,
the solvers produce the best heuristic solutions in all test cases.
But the literature instances are only moderately sized. We expect that the heuristic performance improves relative to the exact approaches as the problem sizes increase.
Towards a more complete numerical analysis, we randomly generate 3 larger scale instances with 160 streams each.

For larger problems, the running time may be important to a design engineer \citep{hindmarsh:1983}.
We apply the least time consuming heuristic of each type for solving the larger scale instances, i.e.\ apply relaxation rounding heuristic FLPR, water filling heuristic WFG, and greedy packing heuristic SS.
We also solve the transshipment model using CPLEX 12.6.3 with a 4h timeout.  The results are in Table \ref{Table:Large_Scale_Results}.

For these instances, greedy packing SS computes a better solution than the relaxation rounding FLPR heuristic or the water filling WFG heuristic, but SS has larger running time.
In instance \texttt{large-scale1}, greedy packing SS computes 218, a better solution than the CPLEX value 219. Moreover, the CPLEX heuristic spent the first 1hr of computation time at solution 257 (18\% worse than the solution SS obtains in 10 minutes) and the next 2hr of computation time at solution 235 (8\% worse than the solution SS obtains in 10 minutes). Any design engineer wishing to interact with the results would be frustrated by these times.

In instance \texttt{large-scale2}, CPLEX computes a slightly better solution (239) than the SS heuristic (242).
But the good CPLEX solution is computed slightly before the 4h timeout. For more than 3.5hr, the best CPLEX heuristic is 273 (13\% worse than the solution SS obtains in 10 minutes).
Finally, in instance \texttt{large-scale0}, CPLEX computes a significantly better solution (175) than the SS heuristic (233).
But CPLEX computes the good solution after 2h and the incumbent is similar to the greedy packing SS solution for the first 2 hours.
These findings demonstrate that greedy packing approaches are particularly useful when transitioning to larger scale instances.

Note that we could additionally study approaches to improve the heuristic performance of CPLEX, e.g.\ by changing CPLEX parameters or using a parallel version of CPLEX. But the point of this paper is to develop a deep understanding of a very important problem that consistently arises in process systems engineering \citep{floudas:2012}.
}

\section{Discussion of Manuscript Contributions}
\label{sec:discussion}

This section reflects on this paper's contributions and situates the work with respect to existing literature. We begin in Section \ref{sec:single_temperature_interval} by designing efficient heuristics for the minimum number of matches problem with the special case of a single temperature interval.
Initially, we show that the 2 performance guarantee by \citet{furman:2004} is tight.
Using graph theoretic properties, we propose a new MILP formulation for the single temperature interval problem which does not contain any big-M constraints. We also develop an improved, tight, greedy 1.5-approximation algorithm which prioritizes stream matches with equal heat loads. Apart from the its independent interest, solving the single temperature interval problem is a major ingredient of water filling heuristics.


The multiple temperature interval problem requires big-M parameters. We reduce these parameters in Section \ref{sec:max_heat} by
computing the maximum amount of heat transfer with match restrictions.
Initially, we present a greedy algorithm for exchanging the maximum amount of heat between two streams.
This algorithm computes tighter big-M parameters than \citet{gundersen:1997}.
We also propose LP-based ways for computing the maximum exchanged heat using only a subset of the available matches.
Maximum heat computations are fundamental ingredients of our heuristic methods and detect the overall problem feasibility. This paper emphasizes how tighter big-M parameters improve heuristics with performance guarantees, but notice that improving the big-M parameters will also tend to improve exact methods.

Section \ref{sec:relaxation_rounding} further investigates the \emph{relaxation rounding heuristics} of \cite{furman:2004}. 
\citet{furman:2004} propose a heuristic for the minimum number of matches problem based on rounding the LP relaxation of the transportation MILP formulation
(\emph{Fractional LP Rounding (FLPR)}).
Initially, we formulate the LP relaxation as a minimum cost flow problem showing that it can be solved with network flow techniques which are more efficient than generic linear programming.
We derive a negative performance guarantee showing that FLPR has poor performance in the worst case.
We also prove a new positive performance guarantee for FLPR indicating that its worst-case performance may be improved with tighter big-M parameters.
Experimental evaluation shows that the performance of FLPR improves with our tighter algorithm for computing big-M parameters.
Motivated by the method of Lagrangian Relaxation, \citet{furman:2004} proposed an approach generalizing FLPR by approximating the cost of the heat transferred via each match.
We revisit possible policies for approximating the cost of each match.
Interestingly, we show that this approach can be used as a generic method for potentially improving a solution of the minimum number of matches problem.
Heuristic \emph{Lagrangian Relaxation Rounding (LRR)} aims to improve the solution of FLPR in this way.
Finally, we propose a new heuristic, namely \emph{Covering Relaxation Rounding (CRR)}, that successively solves instances of a new covering relaxation which also requires big-M parameters.

Section \ref{sec:water_filling} defines \emph{water filling heuristics} as a class of heuristics solving the minimum number of matches problem in a top-down manner, i.e.\ from highest to lowest temperature interval.
\citet{cerdaetwesterberg:1983} and \citet{furman:2004} have solution methods based on water filling.
We improve these heuristics by developing novel, efficient ways for solving the single temperature interval problem.
For example, heuristics \emph{MILP-based Water Filling (WFM)} and \emph{Greedy Water Filling (WFG)} incorporate the new MILP formulation (Eqs.\ \ref{Eq:SingleMIP_MaxBins}-\ref{Eq:SingleMIP_Integrality}) and greedy Algorithm IG, respectively.
With appropriate LP, we further improve water filling heuristics by reusing in each iteration matches selected in previous iterations.
\citet{furman:2004} showed a performance guarantee scaling with the number of temperature intervals. 
We show that this performance guarantee is asymptotically tight for water filling heuristics. 

Section \ref{sec:greedy_packing} develops a new \emph{greedy packing approach} for designing efficient heuristics for the minimum the number of matches problem motivated by the packing nature of the problem.
Greedy packing requires feasibility conditions which may be interpreted as a decomposition method analogous to pinch point decomposition, see
\cite{hindmarsh:1983}.
Similarly to \cite{cerdaetwesterberg:1983}, stream ordering affects the efficiency of greedy packing heuristics.
Based on the feasibility conditions, the LP in Eqs.\ (\ref{Eq:MaxHeatLP_Objective})-(\ref{Eq:MaxHeatLP_Positiveness}) selects matches carrying a large amount of heat and incurring low unitary cost for exchanging heat.
Heuristic \emph{LP-based Largest Heat Match (LHM-LM)} selects matches greedily by solving instances of this LP.
Using a standard packing argument, we obtain a new logarithmic performance guarantee.
LHM-LP has a polynomial worst-case running time but is experimentally time-consuming due to the repeated LP solving.
We propose three other greedy packing heuristic variants which improve the running time at the price of solution quality.
These other variants are based on different time-efficient strategies for selecting good matches.
Heuristic \emph{Largest Heat Match (LHM)} selects matches exchanging high heat in a pairwise manner.
Heuristic \emph{Largest Fraction Match (LFM)} is inspired by the idea of our greedy approximation algorithm for the single temperature interval problem which prioritizes roughly equal matches.
Heuristic \emph{Smallest Stream First (SS)} is inspired by the idea of the tick-off heuristic \citep{hindmarsh:1983} and produces matches in a stream to stream basis, where a hot stream is ticked-off by being matched with a small number of cold streams. 

Finally, Section \ref{sec:results} 
shows numerically that our new way of computing the big-M parameters, our improved algorithms for the single temperature interval, and the other enhancements improve the performance of relaxation rounding and water-filling heuristics.
The numerical results also show that our novel greedy packing heuristics {typically find better feasible solutions than} relaxation rounding and water-filling ones. {But the tradeoff is that the relaxation rounding and water filling algorithms achieve very efficient run times.}

\section{Conclusion}
\label{sec:conclusion}

In his PhD thesis, Professor Floudas showed that, given a solution to the minimum number of matches problem, he could solve a nonlinear optimization problem designing effective heat recovery networks. But the sequential HENS method cannot guarantee that promising minimum number of matches solutions will be optimal (or even feasible!) to Professor Floudas' nonlinear optimization problem. Since the nonlinear optimization problem is relatively easy to solve, we propose generating many good candidate solutions to the minimum number of matches problem. This manuscript develops nine heuristics with performance guarantees to the minimum number of matches problem. Each of the nine heuristics is either novel or provably the best in its class. Beyond approximation algorithms, our work has interesting implications for solving the minimum number of matches problem exactly, e.g.\ the analysis into reducing big-M parameters {or the possibility of quickly generating good primal feasible solutions}. \\[-4pt]

\noindent
\textbf{Acknowledgments} \\[2pt]
\noindent
We gratefully acknowledge support from EPSRC EP/P008739/1, an EPSRC DTP to G.K., and a Royal Academy of Engineering Research Fellowship to R.M.


\bibliographystyle{ijocv081}
\bibliography{refs}


\newpage
\appendix

\section{$\mathcal{NP}$-hardness Reduction}
\label{App:NP_harndess}

\begingroup
\def\thetheorem{\ref{thm:NP_hardness}}
\begin{theorem}
There exists an $\mathcal{NP}$-hardness reduction from bin packing to the minimum number of matches problem with a single temperature interval.
\end{theorem}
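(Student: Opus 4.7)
The plan is to exhibit a polynomial-time many-one reduction from bin packing to single-temperature-interval MNM by exploiting the forest structure of optimal solutions. First, I would invoke the graph-theoretic characterization from Section~\ref{sec:single_temperature_interval} (proved in \ref{App:Single_Temperature_Interval}): an optimal single-interval solution has a bipartite forest representation $G = (H \cup C, M)$ with $\ell$ connected components, and the number of matches satisfies $v = n + m - \ell$. Because heat is conserved inside each connected component, every tree must be ``heat-balanced'': the sum of hot stream loads in the tree equals the sum of cold stream loads, and each tree contains at least one stream of each type. Consequently, minimizing $v$ reduces to partitioning the $n+m$ streams into the maximum possible number of heat-balanced subsets, each nonempty in both hot and cold streams.

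Next, I would set up the reduction. Take a bin packing decision instance consisting of items $a_1,\ldots,a_n$, bin capacity $B$, and integer $k$, asking whether the items fit into $k$ bins. I can assume without loss of generality that $\sum_{i=1}^n a_i = kB$ (perfect bin packing); this restricted version remains strongly $\mathcal{NP}$-hard, as it already contains 3-Partition. From this, construct the MNM instance with $n$ hot streams of loads $h_i = a_i$ and $k$ cold streams of loads $c_j = B$, so that $\sum h_i = \sum c_j = kB$ and heat conservation holds. The construction is clearly polynomial in the input size.

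Then I would verify the equivalence MNM $= n$ iff the bin packing instance is a YES-instance. For the forward direction, a packing of the items into $k$ bins of capacity $B$ must fill each bin exactly (since totals coincide); assigning the item group of each bin as a subtree attached to the corresponding cold stream produces $\ell = k$ trees, hence $v = n + k - k = n$ matches. Conversely, if an MNM solution achieves $\ell = k$ trees, then since each tree contains at least one cold stream and there are only $k$ cold streams, every tree has exactly one cold stream, of load $B$; by balance, the hot streams within each tree sum to $B$, yielding a valid partition into $k$ bins. Any solution with $\ell < k$ gives $v \geq n+1$, so MNM $= n$ is achieved if and only if bin packing is YES.

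The main obstacle is the heat-conservation constraint, which forces the reduction to treat a perfect-packing variant of bin packing rather than general bin packing. I would handle this by citing that perfect bin packing (equivalently, 3-Partition) is itself strongly $\mathcal{NP}$-hard, so the reduction suffices to establish strong $\mathcal{NP}$-hardness of single-interval MNM. A secondary point worth mentioning is that the reduction actually reveals the combinatorial core of the single-interval problem: MNM in one temperature interval is essentially equivalent to a balanced-partition/packing problem, which is the structural insight that motivates the greedy packing heuristics developed later in the paper.
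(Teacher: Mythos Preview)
Your proposal is correct and follows essentially the same reduction as the paper: hot streams encode items, cold streams encode bins, and the question is whether exactly $n$ matches suffice. The paper argues the backward direction more directly---with only $n$ matches and $n$ hot streams of positive load, each hot stream is matched to exactly one cold stream, which immediately yields the bin assignment---whereas you route through the forest characterization $v = n + m - \ell$ to conclude $\ell = k$ and then that each tree contains a single cold stream; both arguments reach the same partition. Your explicit restriction to perfect bin packing (with $\sum_i a_i = kB$) to satisfy heat conservation is actually cleaner than the paper's treatment, which writes the cold-stream constraint as an inequality and leaves the conservation issue implicit.
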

\addtocounter{theorem}{-1}
\endgroup

\begin{proof}
Initially, define the decision version of bin packing.
A bin packing instance consists of a set $B=\{1,2,\ldots,m\}$ of bins, each bin of capacity $K$, and a set $O=\{1,2,\ldots,n\}$ of objects, where object $i\in O$ has size $s_i\in(0,K]$.
The goal is to determine whether there exist a feasible packing $O_1,O_2,\ldots,O_m$ of the objects into the bins, where $O_j\subseteq O$ is the subset of objects packed in bin $j\in B$. 
Each object is placed in exactly one bin, i.e.\ $\cup_{j=1}^mO_j=O$ and $O_j\cap O_{j'}=\emptyset$ for each $1\leq j<j'\leq m$, and the total size of the objects in a bin do not exceed its capacity, i.e.\ $\sum_{i\in O_j}s_i\leq K$, for $j\in B$.

Consider an instance $(O,n,B,m)$ of bin packing.
Construct an instance of the minimum number of matches problem with a single temperature interval by setting $H=O$, $h_i=s_i$ for $i=1,\ldots,n$, $C=B$ and $c_j=K$ for $j=1,\ldots,m$.
We claim that bin packing has a feasible solution if and only if the constructed minimum number of matches instance is feasible using exactly $n$ matches.

To the first direction, consider a feasible packing $O_1,\ldots,O_m$.
For each $i\in H$ and $j\in C$, we obtain a solution for the minimum number of matches instance by setting $q_{i,j}=h_i$ if $i\in O_j$, and $q_{i,j}=0$, otherwise. 
By the constraints $\cup_{j=1}^m O_j=O$ and $O_j\cap O_{j'}=\emptyset$ for each $1\leq j<j'\leq m$, there is exactly one $j\in B$ such that $i\in O_j$.
Hence, the number of matches is $|\{(i,j)\in H\times C:q_{i,j}>0\}|=n$ and $\sum_{j\in C}q_{i,j}=h_i$ for every $i\in H$.
Since the capacity of bin $j\in B$ is not exceeded, we have that $\sum_{i\in O_j}s_i\leq K$, or equivalently $\sum_{i\in H}q_{i,j}\leq c_j$ for all $j\in C$.
Thus, the obtained solution is feasible.

To the other direction, consider a feasible solution for the minimum number of matches instance.
Obtain a feasible packing by placing object $i\in O$ in the bin $j$ if and only if $q_{i,j}>0$.
Since the solution contains at most $n$ matches and $h_i>0$, for each $i\in H$, each hot stream $i\in H$ matches with exactly one cold stream $j\in C$ and it holds that $q_{i,j}=h_i$.
That is, each object is placed in exactly one bin.
Given that $\sum_{i\in  H}q_{i,j}\leq c_j=K$, the bin capacity constraints are also satisfied.
\end{proof}

\section{Single Temperature Interval Problem}
\label{App:Single_Temperature_Interval}

Lemma \ref{lem:acyclic} concerns the structure of an optimal solution for the single temperature interval problem.
It shows that the corresponding graph is acyclic and that the number of matches is related to the number of graph's connected components (trees), if arc directions are ignored.

\begin{lemma}
\label{lem:acyclic}
Consider an instance $H$, $C$ of the single temperature interval problem. 
For each optimal solution $(\vec{y}^*,\vec{q}^*)$, there exists an integer $\ell^*\in[1,\min\{n,m\}]$ s.t. 
\begin{itemize}
    \item if arc directions are ignored, the corresponding graph $G(\vec{y}^*,\vec{q}^*)$ is a forest consisting of $\ell^*$ trees, i.e.\ there are no cycles, and
    \item $(\vec{y}^*,\vec{q}^*)$ contains $v^*=m+n-\ell^*$ matches.
\end{itemize}
\end{lemma}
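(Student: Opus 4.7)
The plan is to prove the two bullets in order. The second bullet is a purely combinatorial corollary of the first: in any forest on a vertex set of size $n+m$ with $\ell^*$ connected components, the number of edges equals $(n+m)-\ell^*$. Since the matches of $(\vec{y}^*,\vec{q}^*)$ are precisely the edges of $G(\vec{y}^*,\vec{q}^*)$, this yields $v^* = n+m-\ell^*$ once acyclicity is established.

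For the first bullet I would argue by contradiction via a standard cycle-cancellation. Assume $G(\vec{y}^*,\vec{q}^*)$, viewed as an undirected bipartite graph, contains a cycle. Because $G$ is bipartite (hot nodes vs.\ cold nodes), any cycle has even length and alternates between $H$ and $C$; write it as $i_1, j_1, i_2, j_2, \ldots, i_p, j_p, i_1$, where each consecutive pair corresponds to a match with strictly positive heat flow $q_{i,j}>0$. Orient the cycle, label the edges alternately as ``plus'' and ``minus,'' and define a perturbation that adds $\epsilon$ to the plus edges and subtracts $\epsilon$ from the minus edges. At every vertex of the cycle, exactly one incident edge is plus and one is minus, so heat supplied/demanded is preserved; all non-cycle flows are unchanged. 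Choose $\epsilon$ equal to the minimum of $q_{i,j}$ over the minus edges. The resulting $\vec{q}$ is still non-negative, still satisfies heat conservation, and zeroes out at least one edge, yielding a feasible solution with strictly fewer matches. This contradicts optimality of $(\vec{y}^*,\vec{q}^*)$, hence $G(\vec{y}^*,\vec{q}^*)$ is acyclic, i.e.\ a forest.

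It remains to justify the range $\ell^* \in [1,\min\{n,m\}]$. The lower bound $\ell^* \geq 1$ is immediate since heat conservation forces at least one match, so the vertex set with its edges has at least one connected component. For the upper bound, restrict heat conservation to a single connected component (tree) $T$ of $G$: summing the supply equations over the hot nodes in $T$ and the demand equations over the cold nodes in $T$ gives $\sum_{i\in T\cap H} h_i = \sum_{j\in T\cap C} c_j$, which is strictly positive. Hence $T$ must contain at least one hot and at least one cold stream. Summing across all $\ell^*$ trees gives $\ell^* \leq n$ and $\ell^* \leq m$, so $\ell^* \leq \min\{n,m\}$.

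The only nontrivial step is the cycle-cancellation argument, and the main thing to be careful about is the ``two incident edges, one plus and one minus'' bookkeeping that guarantees flow conservation is preserved after the perturbation. Once that is checked, the feasibility and strict decrease in match count follow, and the forest edge-count identity and the per-component heat balance close out the proof cleanly.
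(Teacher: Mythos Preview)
Your proposal is correct and follows essentially the same approach as the paper: a cycle-cancellation argument to establish acyclicity, followed by the forest edge-count identity $v^* = (n+m)-\ell^*$. In fact you supply more than the paper does---the paper's proof never justifies the range $\ell^*\in[1,\min\{n,m\}]$, whereas your per-component heat-balance argument (each tree must contain at least one hot and one cold stream) closes that gap; your justification of $\ell^*\ge 1$ is slightly overstated, since that bound is immediate from the vertex set being nonempty, but this is harmless.
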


\begin{proof}
Assume that $G(\vec{y}^*,\vec{q}^*)$ contains a cycle, after removing arc directions.
Moreover, let $M=\{(i_1,j_1),(i_2,j_1),(i_2,j_2),(i_3,j_2),\ldots, (i_g,j_{g-1}),(i_g,j_g),(i_1,j_g)\}$ be a subset of matches forming a cycle.
Denote by $q_{\min}^*=\min\{q_{i,j}^*:(i,j)\in M\}$ the minimum amount of heat transferred via a match in $M$. 
Without loss of generality, assume that $q_{i_1,j_1}^*=q_{\min}^*$.
Starting from $(\vec{y}^*,\vec{q}^*)$, produce a feasible solution $(\vec{y},\vec{q})$ as follows.
Set $q_{i_1,j_1}=0$, $q_{i_e,j_e}=q_{i_e,j_e}^*-q_{\min}^*$ and $q_{i_e,j_{e-1}}=q_{i_e,j_{e-1}}^*+q_{\min}^*$, for $e=2,\ldots,g$, as well as $q_{i_1,j_g}=q_{i_1,j_g}^*+q_{\min}^*$.
The new solution $(\vec{y},\vec{q})$ is feasible and has a strictly smaller number of matches compared to $(\vec{y}^*,\vec{q}^*)$, which is a contradiction. 

%

Since $G(\vec{y}^*,\vec{q}^*)$ does not contain a cycle, it must be a forest consisting of $\ell^*$ trees (which we call \emph{bins} from a packing perspective).
Let $B=\{1,\ldots,\ell^*\}$ be the set of these trees and $M_b$ the subset of matches in tree $b\in B$.
By definition, tree $b\in B$ contains $|M_b|$ matches (edges) and, therefore, $|M_b|+1$ streams (nodes).
Furthermore, each stream appears in exactly one tree implying that $\sum_{b=1}^{\ell^*}|M_b|=n+m-\ell^*$.
Thus, it holds that the number of matches in $(\vec{y}^*,\vec{q}^*)$ is equal to:
\begin{equation*}
v =\sum_{b=1}^{\ell^*}|M_b| = n+m-\ell^*.
\vspace{-24pt}
\end{equation*}
\end{proof}

Theorem \ref{thm:greedy} that Algorithm SG, developed by \citet{furman:2004}, is tight.

\begingroup
\def\thetheorem{\ref{thm:greedy}}
\begin{theorem}
Algorithm SG achieves an approximation ratio of 2 for the single temperature interval problem and it is tight.
\end{theorem}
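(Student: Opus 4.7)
The plan has two parts: an upper bound showing Algorithm SG is $2$-approximate, and a family of instances witnessing that this ratio is the best achievable constant. For the upper bound, I would count matches by tracking stream exhaustion. Every iteration sets $\min\{h_i, c_j\}$ to zero, thereby advancing at least one of the pointers $i$ or $j$; let $d$ be the number of iterations in which both sides are exhausted simultaneously. Since over the whole run $i$ advances exactly $n$ times and $j$ advances exactly $m$ times, the number of iterations (equivalently, matches) equals $n + m - d$. Heat conservation forces the final iteration to exhaust both sides at once, so $d \geq 1$, and SG produces at most $n + m - 1$ matches. Combined with Lemma \ref{lem:acyclic}, which gives an optimum of $v^* = n + m - \ell^* \geq n + m - \min\{n,m\} = \max\{n,m\}$, this yields
\[
\frac{\text{SG}}{\text{OPT}} \;\leq\; \frac{n + m - 1}{\max\{n,m\}} \;=\; 1 + \frac{\min\{n,m\} - 1}{\max\{n,m\}} \;<\; 2,
\]
proving the $2$-approximation ratio.

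For tightness, I plan to exhibit a family of instances parameterized by $p \geq 2$ for which the ratio attains $(2p-1)/(p+1) \to 2$. Concretely, take $n = p$ hot streams with heats $(p, p-1, \ldots, 1)$ and $m = p+1$ cold streams with sorted heats $(p-1, p-1, p-2, p-3, \ldots, 2, 1, 1)$; a direct calculation shows both sides sum to $p(p+1)/2$. For the OPT bound I would exhibit an explicit decomposition into $p - 1$ one-to-one pairs $(h_i, c_{\pi(i)})$ of equal heat (for $i = 2, \ldots, p$) together with a single three-node tree $\{h_1, c_2, c_{p+1}\}$ realizing the identity $p = (p-1) + 1$; this gives $p + 1$ matches, which is optimal since $\max\{n,m\} = p + 1$. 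For the SG count I would trace the algorithm on the sorted sequences and show inductively that each hot stream $h_i$ with $i \leq p - 1$ contributes exactly two matches (one nearly filling $c_i$ and a single unit of residue carried into $c_{i+1}$), while $h_p$ contributes a single match, yielding $2(p-1) + 1 = 2p - 1$ matches in total.

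The main obstacle is constructing this tightness family. Pushing the ratio toward $2$ requires OPT to be essentially $\max\{n,m\}$, which in turn forces the hot and cold multisets to be almost identical. But once the multisets coincide, the sorting step of SG aligns them pairwise and SG also finds a near-optimal solution, collapsing the ratio to $1$. The construction above sidesteps this collapse by duplicating exactly one cold value so that the sorted alignment is offset by a single unit throughout the sequence; this ``off-by-one'' cascade forces SG to split every interior hot stream into two matches while still permitting OPT to pair all but one hot stream one-to-one. Verifying algebraically that the cascade persists through every stage, in particular maintaining the invariant that each $c_{i+1}$ inherits precisely one unit of residue from $h_i$, is the main technical step.
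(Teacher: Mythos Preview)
Your proposal is correct and follows essentially the same approach as the paper. The upper bound argument is identical (count iterations via stream exhaustion to get at most $n+m-1$ matches, then compare to the trivial lower bound $\max\{n,m\}$), and your tightness family, while using different specific heat values, realizes exactly the same ``off-by-one cascade'' mechanism as the paper's construction: the sorted hot and cold sequences are offset by a single unit at every position, forcing SG to split each hot stream into two matches, while an explicit pairing shows OPT attains $\max\{n,m\}$.
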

\addtocounter{theorem}{-1}
\endgroup

\begin{proof}
In the algorithm's solution, the number $v$ of matches is equal to the number of steps that the algorithm performs.
For each pair of streams $i\in H$ and $j\in C$ matched by the algorithm, at least one has zero remaining heat load exactly after they have been matched.
Therefore, the number of steps is at most $v\leq n+m-1$. 
The optimal solution contains at least $v^*\geq\max\{n,m\}$.
Hence, the algorithm is 2-approximate.

Consider a set of $n$ hot streams with heat loads $h_i=2n+1-i$ for $1\leq i\leq n$ and $m=n+1$ cold streams with  $c_j=2n-j$, for $1\leq j\leq m$. 
As shown in Figure \ref{Fig:SimpleGreedyApp} for the special case $n=5$, the algorithm uses $2n$ matches while the optimal solution has $n+1$ matches. 
Hence, the 2 approximation ratio of Algorithm SG is asymptotically tight.
%
%
%
\end{proof}

\begin{figure}
\begin{subfigure}[t]{0.45\textwidth}
\centering
\includegraphics{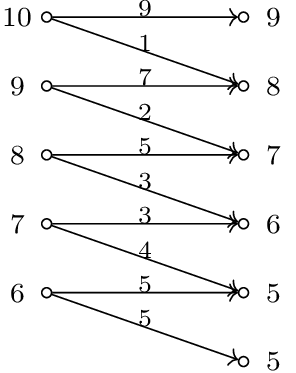}
\caption{Algorithm SG: 10}
\label{Fig:SimpleGreedyAlg}
\end{subfigure}
\begin{subfigure}[t]{0.45\textwidth}
\centering
\includegraphics{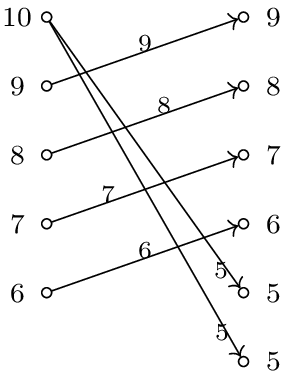}
\caption{Optimal solution: 6}
\label{Fig:SimpleGreedyOpt}
\end{subfigure}

\caption{An instance showing the tightness of the 2 performance guarantee for Algorithm SG.\label{Fig:SimpleGreedyApp}}
\end{figure}

Lemma \ref{Lem:EqualStreams} formalizes the benefit of matching stream pairs with equal heat loads and indicates the way of manipulating these matches in the analysis of Algorithm IG and the proof of Theorem \ref{thm:impgreedy}.

\begin{lemma}
\label{Lem:EqualStreams}
Consider an instance $(H,C)$ of the single temperature interval problem and suppose that there exists a pair of streams $i\in H$ and $j\in C$ such that $h_i=c_j$.
Then, 
\begin{itemize}
    \item there exists an optimal solution $(\vec{y}^*,\vec{q}^*)$ s.t. $q_{i,j}^*=h_i$, i.e.\ $i$ and $j$ are matched together,
    \item any $\rho$-approximate solution for $(H\setminus\{i\},C\setminus\{j\})$ is also $\rho$-approximate for $(H,C)$ with the addition of match $(i,j)$.
\end{itemize}
\end{lemma}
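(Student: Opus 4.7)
The plan is to rely on the acyclic structure provided by Lemma \ref{lem:acyclic} and argue, via a local exchange, that any optimal solution can be transformed into one in which $i$ and $j$ are matched exclusively with each other, without increasing the number of matches. Part (ii) then follows by comparing optimal values on the two instances.

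For the first bullet, I would start from an optimal solution $(\vec{y}^*,\vec{q}^*)$ whose graph $G^*$ is a forest (Lemma \ref{lem:acyclic}), with $v^* = n+m-\ell^*$ matches. If $q_{i,j}^* = h_i$ already, there is nothing to do, so assume not. Let $C_i^*\subseteq C$ and $H_j^*\subseteq H$ denote the cold streams matched with $i$ and the hot streams matched with $j$ in $G^*$, and split into two cases. In the first case $i$ and $j$ lie in the same tree of $G^*$: then adding the edge $(i,j)$ to $G^*$ closes a cycle, and the standard cycle-swap used in the proof of Lemma \ref{lem:acyclic} lets me shift $\min$-heat around the cycle to annihilate at least one edge, repeating until all of $h_i=c_j$ units flow directly through $(i,j)$ and the acyclicity is restored. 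In the second case $i$ and $j$ lie in different trees: I delete all $|C_i^*|$ edges incident to $i$ and all $|H_j^*|$ edges incident to $j$, insert the edge $(i,j)$ carrying $h_i$ units, and re-balance the residual supplies $q^*_{i_r,j}$ on $H_j^*$ against the residual demands $q^*_{i,j_l}$ on $C_i^*$, whose totals both equal $h_i$. The rebalancing can be realized on an acyclic bipartite graph over $H_j^*\cup C_i^*$ with at most $|H_j^*|+|C_i^*|-1$ edges (this is exactly the single-temperature-interval feasibility fact behind Lemma \ref{lem:acyclic}). The net edge count therefore changes by at most $-(|C_i^*|+|H_j^*|) + 1 + (|H_j^*|+|C_i^*|-1) = 0$, so the modified solution is still optimal and contains $(i,j)$ with $q_{i,j}=h_i$.

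For the second bullet, the argument is purely an optimal-value inequality. Let $v^*(H,C)$ and $v^*(H\setminus\{i\},C\setminus\{j\})$ denote the two optima. By the first bullet there is an optimal solution on $(H,C)$ in which $(i,j)$ carries the full heat $h_i=c_j$; deleting that single match and the two streams yields a feasible solution on $(H\setminus\{i\},C\setminus\{j\})$ with $v^*(H,C)-1$ matches, hence $v^*(H\setminus\{i\},C\setminus\{j\}) \le v^*(H,C)-1$. Now take any $\rho$-approximate solution on the reduced instance with $v'$ matches, and add $(i,j)$ to obtain a feasible solution on $(H,C)$ with $v'+1$ matches. Then
\begin{equation*}
v'+1 \;\le\; \rho\cdot v^*(H\setminus\{i\},C\setminus\{j\}) + 1 \;\le\; \rho\bigl(v^*(H,C)-1\bigr) + 1 \;\le\; \rho\cdot v^*(H,C),
\end{equation*}
where the last inequality uses $\rho\ge 1$.

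The main obstacle is the bookkeeping in the second case of the exchange argument: one must verify that the rebalancing on $H_j^*\cup C_i^*$ truly can be realized with at most $|H_j^*|+|C_i^*|-1$ matches and that this does not secretly create a cycle with edges outside $H_j^*\cup C_i^*$. Using Lemma \ref{lem:acyclic} applied to the sub-instance with supplies $\{q^*_{i_r,j}\}_{i_r\in H_j^*}$ and demands $\{q^*_{i,j_l}\}_{j_l\in C_i^*}$ (whose totals agree), the realizability is immediate, and cycle-freeness is preserved because the removed edges disconnected $i$ from its tree neighbors and $j$ from its tree neighbors before the rebalancing is installed, leaving any new cycle confined to the bipartite sub-instance, which is itself acyclic by construction.
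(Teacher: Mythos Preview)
Your argument is correct and takes the same local-exchange route as the paper; Part~(ii) is verbatim the paper's reasoning. One simplification: the case split in Part~(i) is unnecessary, since your Case~2 rebalancing already works uniformly (this is exactly what the paper does, under the WLOG assumption $q_{i,j}^*=0$)---the edge-count inequality $-(|C_i^*|+|H_j^*|)+1+(|H_j^*|+|C_i^*|-1)\le 0$ does not depend on whether $i$ and $j$ lie in the same tree, and acyclicity of the result is not needed for that count. Your Case~1 is also slightly imprecise as written: after the first cycle-swap $(i,j)$ is an edge of a tree, so ``adding $(i,j)$ to close a cycle'' no longer applies and the ``repeating'' step needs a different mechanism; dropping Case~1 in favor of the uniform Case~2 argument avoids this entirely.
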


\begin{proof}
Consider an optimal solution $(\vec{y}^*,\vec{q}^*)$ in which $i$ and $j$ are not matched solely to each other.
Suppose that $i$ is matched with $j_1,j_2,\ldots,j_{m'}$ while $j$ is matched with $i_1,i_2,\ldots,i_{n'}$.
Without loss of generality, $q_{i,j}^*=0$; the case $0<q_{i,j}^*<h_i$ is treated similarly.
Starting from $(\vec{y}^*,\vec{q}^*)$, we obtain the slightly modified solution $(\vec{y},\vec{q})$ in which $i$ is matched only with $j$. 
The $c_j$ units of heat of $i_1,i_2,\ldots,i_{n'}$ originally transferred to $j$ are now exchanged with $j_1,j_2,\ldots,j_{m'}$, which are no longer matched with $i$.
The remaining solution is not modified.
Analogously to the proof of Theorem \ref{thm:greedy}, we show that there can be at most $n'+m'-1$ new matches between the $n'$ hot streams (i.e.\ $i_1,i_2,\ldots,i_{n'}$) and the $m'$ cold streams (i.e.\ $j_1,j_2,\ldots,j_{m'}$) in $(\vec{y},\vec{q})$.
By also taking into account the new match $(i,j)$, we conclude that there exists always a solution in which $i$ is only matched with $j$ and has no more matches than $(\vec{y}^*,\vec{q}^*)$.

%
%

Consider an optimal solution $(\vec{y}^*,\vec{q}^*)$ for $(H,C)$, in which there are $v^*$ matches and $i$ is matched only with $j$.
An optimal solution for $(H\setminus\{i\},C\setminus\{j\})$ contains $v^*-1$ matches.
Suppose that $(\vec{y},\vec{q})$ is the union of a $\rho$-approximate solution for $(H\setminus\{i\},C\setminus\{j\})$ and the match $(i,j)$.
Let $v$ be the number of matches in $(\vec{y},\vec{q})$.
Clearly, $v-1 \leq \rho\cdot(v^*-1)$ which implies that $v\leq \rho\cdot v^*$, as $\rho\geq 1$.
\end{proof}

The following theorem shows a tight analysis for Algorithm IG.

\begingroup
\def\thetheorem{\ref{thm:impgreedy}}
\begin{theorem}
Algorithm IG achieves an approximation ratio of 1.5 for the single temperature interval problem and it is tight.
\end{theorem}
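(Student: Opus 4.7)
The plan is to prove the $1.5$ approximation ratio in two reduction steps and then construct a matching family of worst-case instances. First I would invoke Lemma \ref{Lem:EqualStreams} to argue that it suffices to analyze what happens \emph{after} line~1 of Algorithm IG has removed all pairs $(i,j)$ with $h_i = c_j$: if the tail of the algorithm (i.e., Algorithm SG on the reduced instance) is $\rho$-approximate there, then the whole algorithm is $\rho$-approximate on the original instance, because each removed pair contributes exactly one match to both the algorithm and an appropriately chosen optimum. So I would let $(H',C')$ denote the post-preprocessing instance with $n'$ hot and $m'$ cold streams and work only with $(H',C')$.

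The key structural input is Lemma \ref{lem:acyclic}: any optimal solution corresponds to a forest of $\ell^*$ trees with $v^* = n' + m' - \ell^*$ matches. Since the matching graph is bipartite and every connected component must be internally heat-balanced, a tree of size~$2$ consists of exactly one hot stream matched to exactly one cold stream with $h_i = c_j$; but no such pair exists in $(H',C')$. Hence every tree in every optimal solution has at least $3$ nodes, giving $\ell^* \le (n'+m')/3$, and therefore
\begin{equation*}
  v^* \;\ge\; n' + m' - \frac{n' + m'}{3} \;=\; \frac{2(n' + m')}{3}.
\end{equation*}
Combining this with the bound $v_{\mathrm{SG}} \le n' + m' - 1$ already established in the proof of Theorem~\ref{thm:greedy} yields
\begin{equation*}
  \frac{v_{\mathrm{SG}}}{v^*} \;\le\; \frac{3(n' + m' - 1)}{2(n' + m')} \;<\; \frac{3}{2},
\end{equation*}
which proves the upper bound.

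For tightness I would exhibit, for each $k\ge 1$, an instance with $n=2k$ hot streams of loads $h_i = i$ for $i=1,\ldots,2k$ and $m=k$ cold streams each of load $c_j = 2k+1$ (heat conservation is immediate). No hot--cold pair has equal load (hot loads are at most $2k$, cold loads are $2k+1$), so the preprocessing step of IG does nothing. An optimal solution pairs $\{h_i, h_{2k+1-i}, c_i\}$ into $k$ size-$3$ trees, achieving $v^* = 2k$ matches. I would then trace Algorithm SG on the sorted sequences to show that at every step except the last one the smaller side becomes zero strictly before the larger side (there is never a simultaneous tie until the very end), so the total number of matches produced is $v_{\mathrm{IG}} = n + m - 1 = 3k - 1$. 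The resulting ratio $(3k-1)/(2k) \to 3/2$ as $k \to \infty$ matches the upper bound asymptotically.

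The main technical obstacle I expect is the bookkeeping in the tightness verification: namely, showing that throughout the execution of SG on the construction, the residual hot load never coincides with the residual cold load except at termination, so that exactly one stream (not two) is consumed at each of the first $3k-2$ steps. I would handle this by a direct induction on the step index, tracking the residual of the currently active cold stream modulo the arithmetic progression of hot loads; the reduction to "no equal pair" in the first paragraph is what ensures this inductive invariant is non-degenerate.
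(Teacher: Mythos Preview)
Your proposal is correct and follows essentially the same route as the paper: reduce via Lemma~\ref{Lem:EqualStreams} to an instance with no equal hot--cold pair, use Lemma~\ref{lem:acyclic} to deduce that every optimal tree has at least three nodes so $v^*\ge \tfrac{2}{3}(n'+m')$, and combine with the SG bound $v\le n'+m'-1$ to get the $3/2$ factor. The only difference is the tightness family: the paper uses $n$ hot streams with even loads $h_i=4n-2i$ and $2n$ cold streams with odd loads $c_j=4n-2j-1$ (for $j\le n$) and $c_j=1$ (for $j>n$), where parity prevents equal pairs and SG attains $3n-1$ matches against an optimum of $2n$; your family ($h_i=i$ for $i\le 2k$, $c_j=2k+1$) is arguably cleaner and yields the same ratio $(3k-1)/(2k)\to 3/2$. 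Both constructions reach $3/2$ only asymptotically, so the statement ``tight'' is to be read in that sense in either proof.
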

\addtocounter{theorem}{-1}
\endgroup

\begin{proof}
By Theorem \ref{thm:greedy}, Algorithm IG produces a solution $(\vec{y},\vec{q})$ with $v\leq n+m$ matches.
Consider an optimal solution $(\vec{y}^*,\vec{q}^*)$.
By Lemma \ref{lem:acyclic}, $(\vec{y}^*,\vec{q}^*)$ consists of $\ell^*$ trees and has $v^* = n+m-\ell^*$ matches.
Due Lemma \ref{Lem:EqualStreams}, we may assume that instance does not contain a pair of equal hot and cold streams.
Hence, each tree in the optimal solution contains at least 3 streams, i.e.\ $\ell^*\leq (n+m)/3$. 
Thus, $v^*\geq (2/3)(n+m)$ and we conclude that $v\leq(3/2)v^*$.


For the tightness of our analysis, consider an instance of the problem with $n$ hot streams, where $h_i=4n-2i$ for $i=1,\ldots,n$, and $m=2n$ cold streams such that $c_j=4n-2j-1$ for $j=1,\ldots,n$ and $c_j=1$ for $j=n+1,\ldots,2n$.
Algorithm IG uses $3n$ matches, while the optimal solution uses $2n$ matches.
Hence the $3/2$ approximation ratio of the algorithm is tight.
Figures \ref{Fig:ImprovedGreedyAlg} and \ref{Fig:ImprovedGreedyOpt} show the special case with $n=4$.
\end{proof}

\begin{figure}
\begin{subfigure}[t]{0.45\textwidth}
\centering
\includegraphics{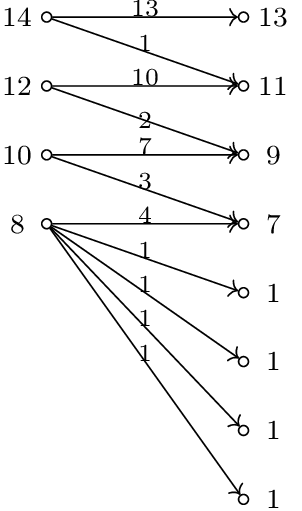}
\caption{Algorithm IG: 11}
\label{Fig:ImprovedGreedyAlg}
\end{subfigure}
\begin{subfigure}[t]{0.45\textwidth}
\centering
\includegraphics{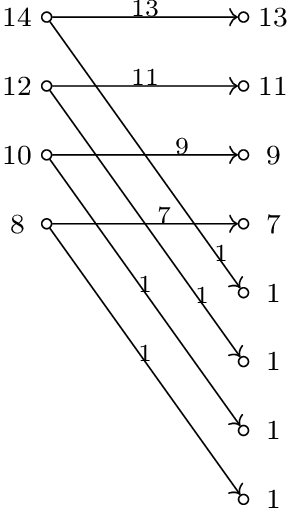}
\caption{Optimal solution: 8}
\label{Fig:ImprovedGreedyOpt}
\end{subfigure}

\caption{An instance showing the tightness of the 1.5 performance guarantee for Algorithm IG.}
\end{figure}

\section{Relaxation Rounding Heuristics}
\label{App:Relaxation_Rounding}

\begingroup
\def\thetheorem{\ref{Thm:FLPR_negative}}
\begin{theorem}
Algorithm FLPR is $\Omega(n)$-approximate.
\end{theorem}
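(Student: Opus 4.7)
The plan is to construct a one-parameter family of single-temperature-interval instances on which FLPR can legitimately return a solution that is a factor $\Omega(n)$ worse than optimal. The instance I would use has a single temperature interval ($k=1$), $n$ hot streams with $h_i=1$ for all $i\in H$, and $n$ cold streams with $c_j=1$ for all $j\in C$, with upper bounds $U_{i,j}=1$ for every pair. The idea is that the fractional relaxation admits a highly ``diffused'' optimum that is no better in LP value than a perfect matching, but after rounding forces every possible edge to be selected.

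First I would compute the integer optimum: any feasible integer solution must match each hot stream to at least one cold stream, and since $h_i=c_j=1=U_{i,j}$, a perfect matching $y_{i,\pi(i)}=1$, $q_{i,\pi(i)}=1$ for any bijection $\pi$ is feasible with exactly $n$ matches, so the integer optimum is $n$. Next I would exhibit the pathological fractional optimum given by $q_{i,j}=1/n$ and $y_{i,j}=1/n$ for every $(i,j)\in H\times C$. Routine verification shows: (i) hot-stream conservation $\sum_j q_{i,j}=1=h_i$; (ii) cold-stream conservation $\sum_i q_{i,j}=1=c_j$; (iii) the big-M constraint $q_{i,j}=1/n=U_{i,j}\cdot y_{i,j}$. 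The objective value of this fractional solution is $\sum_{i,j} y_{i,j}=n$, which equals the integer optimum, so this point is an optimal solution of \ref{EquationSet:FracLP}.

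Now I would apply Algorithm \ref{Alg:FLPR} to this particular fractional optimum. Since $q_{i,j}=1/n>0$ for every pair $(i,j)$, the rounding step sets $y_{i,j}\leftarrow 1$ for all $n^2$ pairs, producing a feasible integer solution of value $n^2$. Therefore on this instance FLPR can produce a solution whose cost is $n^2$ while the optimum is $n$, giving a performance ratio of $n^2/n=n=\Omega(n)$. Taking $n\to\infty$ proves the claim.

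The main subtlety is not a calculation but a modeling one: Algorithm FLPR is defined with respect to \emph{an} optimal fractional solution, and an LP solver may well return the perfect-matching extreme point rather than the diffuse one. The statement $\Omega(n)$-approximate must therefore be read in the worst case over optimal LP solutions, since the algorithm as stated does not distinguish among them; this is exactly the drawback the paper highlights just before the theorem. I would make this explicit in the write-up, noting that the uniform fractional point is an optimum (being an LP-feasible point attaining the integer optimum value) and hence a valid input to the rounding step. A brief remark would also observe that the same construction generalizes to multiple temperature intervals by placing all streams in a single interval and padding, so the lower bound is not an artifact of the single-interval case.
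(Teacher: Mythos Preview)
Your proposal is correct and takes essentially the same approach as the paper: a single temperature interval with $n$ hot and $n$ cold streams of equal heat, where the uniform ``diffuse'' fractional point is LP-optimal but rounds to all $n^2$ matches while the integer optimum is $n$. The only difference is a harmless scaling (you use $h_i=c_j=1$, $q_{i,j}=1/n$; the paper uses $h_i=c_j=n$, $q_{i,j}=1$), and your explicit remark that the $\Omega(n)$ bound is with respect to the worst optimal LP solution is a welcome clarification that the paper leaves implicit.
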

\addtocounter{theorem}{-1}
\endgroup

\begin{proof}
We construct a minimum number of matches instance for which Algorithm FLPR produces a solution $\Omega(n)$ times far from the optimal solution.
This instance consists of a single temperature interval and an equal number of hot and cold streams, i.e.\ $n=m$, with the same heat load $h_i=n$ and $c_j=n$, for each $i\in H$ and $j\in C$.
Because of the single temperature interval, we ignore the temperature interval indices of the variables $\vec{q}$.
In the optimal solution, each hot stream is matched with exactly one cold stream and there are $v^*=n$ matches in total.
Given that there exist feasible solutions such that $q_{i,j}=n$, for every possible $i\in H$ and $j\in C$, the algorithm computes the upper bound $U_{i,j}=n$.
In an optimal fractional solution, it holds that $q_{i,j}^f=1$, for each $i\in H$ and $j\in C$.
In this case, Algorithm FLPR sets $y_{i,j}=1$ for each pair of streams $i\in H$, $j\in C$ and uses a total number of matches equal to $v = \sum_{i\in H} \sum_{j\in C} y_{i,j} = \Omega(n^2)$.
Therefore, it is $\Omega(n)$-approximate.
\end{proof}

\begingroup
\def\thetheorem{\ref{Thm:FLPR_positive}}
\begin{theorem}
Given an optimal fractional solution with a set $M$ of matches and filling ratio $\phi(M)$, FLPR produces a $\frac{1}{\phi(M)}$-approximate integral solution.
\end{theorem}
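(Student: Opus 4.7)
The plan is to bound the FLPR solution cost by the optimal fractional LP objective, which is itself a lower bound on the optimal MILP objective.

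First, I would characterize the FLPR output precisely. By construction of Algorithm \ref{Alg:FLPR}, every pair $(i,j) \in M$ with positive fractional heat flow is rounded to $y_{i,j}=1$, and every other pair is rounded to $y_{i,j}=0$. So the number of matches produced by FLPR is exactly $|M|$. The heat flows $q_{i,s,j,t}$ are unchanged, and feasibility is inherited from the fractional solution since $y_{i,j}=1$ trivially satisfies the big-M constraint whenever positive heat was being transferred.

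Next, I would relate $|M|$ to the fractional LP objective value $\text{OPT}_{\text{LP}}$. At the optimal fractional solution, the big-M constraint (\ref{TransportationMIP_Eq:BigM_Constraint}) together with the fact that the LP minimizes $\sum y_{i,j}$ forces $y_{i,j}^f = L_{i,j}/U_{i,j}$ for each $(i,j) \in M$, and $y_{i,j}^f = 0$ otherwise. Therefore
\begin{equation*}
\text{OPT}_{\text{LP}} \;=\; \sum_{(i,j)\in M} \frac{L_{i,j}}{U_{i,j}} \;\geq\; \sum_{(i,j)\in M} \phi(M) \;=\; |M|\cdot \phi(M),
\end{equation*}
where the inequality uses the definition $\phi(M) = \min_{(i,j)\in M} L_{i,j}/U_{i,j}$.

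Finally, I would close the argument with the standard LP-relaxation bound: since \ref{EquationSet:FracLP} relaxes the integrality constraints of the transportation MILP, $\text{OPT}_{\text{LP}} \leq \text{OPT}_{\text{MILP}}$. Combining,
\begin{equation*}
|M| \;\leq\; \frac{\text{OPT}_{\text{LP}}}{\phi(M)} \;\leq\; \frac{1}{\phi(M)}\cdot \text{OPT}_{\text{MILP}},
\end{equation*}
which is exactly the claimed $1/\phi(M)$-approximation guarantee. There is no substantial obstacle here; the only subtle point is verifying that the LP optimum does tighten the big-M inequality to equality for every used match, which follows from the minimization direction applied to $y_{i,j}$ with no other term coupling $y_{i,j}$ in the objective or constraints.
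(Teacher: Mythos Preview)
Your proposal is correct and follows essentially the same approach as the paper: bound $|M|$ by $\frac{1}{\phi(M)}$ times the fractional LP optimum, then invoke the LP-relaxation lower bound on the MILP optimum. The only cosmetic difference is that you use the equality $y_{i,j}^f = L_{i,j}/U_{i,j}$ at the LP optimum (justified by the minimization direction), whereas the paper's chain uses the big-M constraint as an inequality $\sum_{s,t} q_{i,s,j,t}^f/U_{i,j} \leq y_{i,j}^f$; both yield the same bound.
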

\addtocounter{theorem}{-1}
\endgroup

\begin{proof}
We denote Algorithm FLPR's solution and the optimal fractional solution by $(\vec{y},\vec{q})$ and $(\vec{y}^f,\vec{q}^f)$, respectively. 
Moreover, suppose that $(\vec{y}^*,\vec{q}^*)$ is an optimal integral solution.
Let $M\subseteq H\times C$ be the set of matched pairs of streams by the algorithm, i.e.\ $y_{i,j}=1$, if $(i,j)\in M$, and $y_{i,j}=0$, otherwise. 
Then, it holds that:
\begin{align*}
\sum_{(i,j)\in M} y_{i,j}
& = \sum_{(i,j)\in M} \frac{U_{i,j}}{L_{i,j}} \sum_{s,t\in T}\frac{q_{i,s,j,t}}{U_{i,j}} \\
&\leq \frac{1}{\phi(M)} \sum_{(i,j)\in M}\sum_{s,t\in T} \frac{q_{i,s,j,t}^f}{U_{i,j}} \\
& \leq \frac{1}{\phi(M)} \sum_{(i,j)\in M} y_{i,j}^f \\
&\leq \frac{1}{\phi(M)} \sum_{i\in H}\sum_{j\in C} y_{i,j}^*.
\end{align*} 
The first equality is obtained by using the fact that, for each $(i,j)\in M$, it holds that $y_{i,j}=\frac{U_{i,j}}{L_{i,j}}\frac{L_{i,j}}{U_{i,j}}$ and $L_{i,j}=\sum_{s,t\in T} q_{i,s,j,t}$.
The first inequality is true by the definition of the filling ratio $\phi(M)$ and the fact that $\vec{q} = \vec{q}^f$.
The second inequality holds by the big-M constraint of the fractional relaxation.
The final inequality is valid due to the fact that the optimal fractional solution is a lower bound on the optimal integral solution. 
\end{proof}

\section{Water Filling Heuristics}
\label{App:Water_Filling}

The reformulated MILP in Eqs.\ (\ref{Eq:SingleMIPwithoutConservation_MaxBins})-(\ref{Eq:SingleMIPwithoutConservation_Integrality}) solves the single temperature interval problem without heat conservation.
It is similar to the MILP in Eqs.\ (\ref{Eq:SingleMIP_MaxBins})-(\ref{Eq:SingleMIP_Integrality}) with heat conservation, except that it does not contain constraints (\ref{Eq:SingleMIP_HotBinUsage}) while Equalities (\ref{Eq:SingleMIP_HotAssignment}) and (\ref{Eq:SingleMIP_BinHeatConservation}) become the inequalities (\ref{Eq:SingleMIPwithoutConservation_HotAssignment}) and (\ref{Eq:SingleMIPwithoutConservation_BinHeatConservation}).
In the single temperature interval problem with (without) heat conservation, the total heat of hot streams is equal to (greater than or equal to) the demand of the cold streams.
Each water filling algorithm step solves the single temperature interval problem without heat conservation.
All heat demands of cold streams are satisfied and the excess heat supply of hot streams descends to the subsequent temperature interval.
\begin{align}
\text{max} & \sum_{b\in B} x_b \label{Eq:SingleMIPwithoutConservation_MaxBins}  \\
& x_b \geq \sum_{j\in C} z_{j,b} & b\in B \label{Eq:SingleMIPwithoutConservation_ColdBinUsage} \\ 
& \sum_{b\in B} w_{i,b} \leq 1 & i\in H \label{Eq:SingleMIPwithoutConservation_HotAssignment} \\
& \sum_{b\in B} z_{j,b} = 1 & j\in C \label{Eq:SingleMIPwithoutConservation_ColdAssignment} \\
& \sum_{i\in H} w_{i,b} \cdot h_i \geq \sum_{j\in C} z_{j,b} \cdot c_j & b\in B \label{Eq:SingleMIPwithoutConservation_BinHeatConservation} \\
& x_b, w_{i,b}, z_{j,b}\in\{0,1\} & b\in B, i\in H, j\in C \label{Eq:SingleMIPwithoutConservation_Integrality}
\end{align}


Theorem \ref{Thm:WaterFillingRatio} shows an asymptotically tight performance guarantee for water filling heuristics proportional to the number of temperature intervals.
The positive performance guarantee implies the proof of \citet{furman:2004}. 

\begingroup
\def\thetheorem{\ref{Thm:WaterFillingRatio}}
\begin{theorem}
Algorithms WFG and WFM are $\Theta(k)$-approximate (i.e.\ both $O(k)$-approximate and $\Omega(k)$-approximate).
\end{theorem}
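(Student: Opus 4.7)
The plan is to prove the two directions of $\Theta(k)$ separately.

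For the $O(k)$ upper bound, I would bound the number of matches added per iteration of the water filling loop. At temperature interval $t$, WFM solves the single-interval subproblem to optimality and WFG applies the $1.5$-approximation Algorithm IG. By Lemma \ref{lem:acyclic} applied to the subproblem, the returned solution corresponds to a forest on at most $n_t + m_t$ active nodes and therefore contains at most $n_t + m_t - 1 \le n + m - 1$ edges. Since the MHS step only reuses existing matches and never introduces new ones, the total number of matches produced across $k$ intervals is at most $k(n + m - 1)$. On the other hand, any feasible solution must involve each hot and each cold stream in at least one match, forcing $v^\ast \ge \max\{n, m\} \ge (n+m)/2$. Dividing these bounds gives an approximation ratio of at most $2k$, establishing the $O(k)$ side.

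For the $\Omega(k)$ lower bound, the plan is to exhibit a family of instances parametrized by the number of temperature intervals $k$ whose approximation ratio scales linearly with $k$. The family must simultaneously ensure that (a) the global optimum remains small, say $O(1)$ matches, by exploiting heat descending from early intervals down to demands in later intervals, and (b) the water filling heuristic is forced to commit to a fresh match at nearly every interval, because the matches produced by the single-interval subproblem at interval $t$ involve at least one endpoint that is inactive in every subsequent interval and therefore cannot be recycled by MHS. A natural starting point is to combine a small set of \emph{trunk} streams that carry heat across many intervals with short-lived \emph{local} streams that force the single-interval subproblem in each interval to pick a match which the optimum does not need at all.

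The main obstacle will be achieving property (b) despite the MHS procedure, which aggressively reuses any match whose two endpoints both have positive supply and demand in the current interval. The supplies and demands must therefore be carefully arranged so that each match that WFG or WFM is forced to introduce in interval $t$ involves an endpoint whose heat is exhausted in that interval, while the optimum can still bypass such locally-forced matches by routing heat along a few long-lived trunk matches that exploit the top-down temperature ordering. Once such a family is constructed, a direct counting argument shows that WFG and WFM produce $\Omega(k)$ matches on an instance whose optimum is $O(1)$, yielding the $\Omega(k)$ ratio. Combined with the upper bound, this establishes $\Theta(k)$.
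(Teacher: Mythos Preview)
Your upper bound argument is essentially the paper's: at most $n+m-1$ new matches per interval, summed over $k$ intervals, against the lower bound $v^\ast \ge \max\{n,m\}\ge (n+m)/2$. One small correction: Lemma~\ref{lem:acyclic} is stated for \emph{optimal} single-interval solutions, so it directly covers WFM but not WFG; for WFG you should invoke the $n+m-1$ bound from the proof of Theorem~\ref{thm:greedy} (the step-counting argument for SG/IG). The conclusion is unaffected.

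The lower bound plan has a genuine flaw. You aim for a family with optimum $O(1)$ while water filling produces $\Omega(k)$ matches. But every feasible solution satisfies $v^\ast \ge \max\{n,m\}$, so an $O(1)$ optimum forces $n,m=O(1)$, and then the total number of distinct matches available is at most $nm=O(1)$. No algorithm, however myopic, can output more than $O(1)$ matches on such an instance, so the ratio cannot grow with $k$. Your own mention of many short-lived ``local'' streams already contradicts the $O(1)$-optimum target, since each such stream needs its own match in any feasible solution. The obstacle you flag (beating the MHS reuse step) is real, but the proposed shape of the construction cannot exist.

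The paper avoids this by letting the number of streams scale with $k$: it takes $n=m=k$, with hot stream $i$ supplying one unit in intervals $1,\ldots,k-i+1$ and cold stream $j$ demanding one unit in intervals $j,\ldots,k$. The optimum pairs $i$ with $j=i$ for $v^\ast=k$, while water filling (under adversarial tie-breaking) matches hot stream $i$ with cold streams $1,\ldots,k-i+1$, giving $\Theta(k^2)$ matches and hence ratio $\Theta(k)$. The key is not $O(1)$ versus $\Omega(k)$ but $\Theta(k)$ versus $\Theta(k^2)$.
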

\addtocounter{theorem}{-1}
\endgroup

\begin{proof}
A water filling algorithm solves an instance of the single temperature interval problem in each temperature interval $t=1,\ldots,k$.
This instance consists of at most $n$ hot streams and at most $m$ cold streams.
By Theorem \ref{thm:greedy}, algorithms WFG and WFM introduce at most $n+m$ new matches in each temperature interval and produce a solution with $v\leq k(n+m)$ matches.
In the optimal solution, each hot and cold stream appears in at least one match which means that $v^*\geq\max\{n,m\}$ matches are chosen in total.
So, $v\leq 2k\cdot v^*$.

On the negative side, we show a lower bound on the performance guarantee of algorithms WFG and WFM using the extension of the problem instance in Figure \ref{Fig:Greedy_Packing_Tightness} with an equal number of hot streams, cold streams and temperature intervals, i.e.\ $m=n=k$.
Each hot stream $i\in H$ has heat supply $\sigma_{i,s}\in\{0,1\}$ and each cold stream $j\in C$ has heat demand $\delta_{j,t}\in\{0,1\}$, for each $s,t\in T$. 
Hot stream $i$ has unit heat in temperature intervals $\{1,\ldots,k-i+1\}$ and no supply elsewhere.
Cold stream $j$ demands unit heat in temperature intervals $\{j,\ldots,k\}$ and no demand elsewhere.
In the optimal solution, hot stream $i$ is matched with cold stream $j=i$ and there are $v^*=k$ matches in total.
Algorithms WFG and WFM produce the same solution in which hot stream $i$ is matched with cold streams $\{1,2,\ldots,k-i+1\}$, where $j=i$, and there are $v=O(k^2)$ matches in total.
\end{proof}

\begin{figure}

\begin{center}
\includegraphics{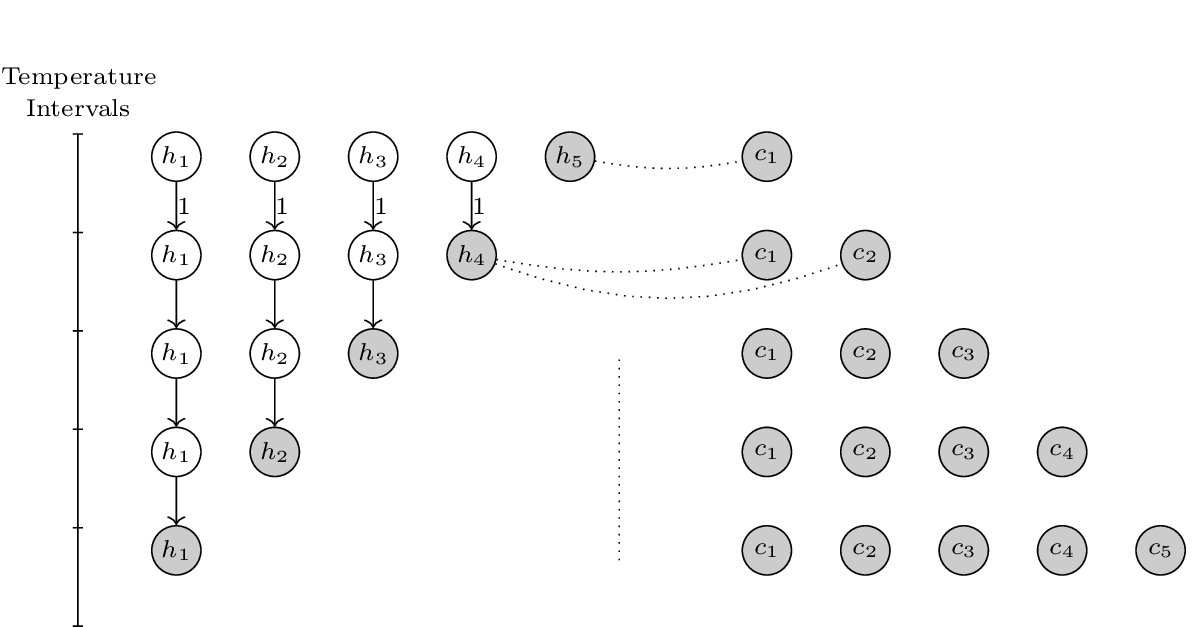}
\caption{An instance showing (asymptotically) the tightness of the $O(k)$ performance guarantee for greedy packing heuristics. In this instance, it holds that $n=m=k$ and every heat supply and heat demand belongs to $\{0,1\}$ in each temperature interval. In the optimal solution, hot stream $i$ is matched with cold stream $j=i$ and there are $n$ matches. In the algorithm's solution, hot stream $i$ is matched with cold streams $1,\ldots,m-i + 1$ and there are $\Omega(n^2)$ matches in total.}
\label{Fig:Greedy_Packing_Tightness}
\end{center}

\end{figure}

\section{Greedy Packing Heuristics}
\label{App:Greedy_Packing}

Lemma \ref{Lem:MonotonicDecomposition} shows a condition ensuring the strict monotonicity of a greedy heuristic which decomposes any instance $I$ into the instances $I^A$ (already solved) and $I^B$ (remaining to be solved) in each iteration (see Section \ref{Subsection:MonotonicGreedyHeuristics}).

\begin{lemma}
\label{Lem:MonotonicDecomposition}
A greedy heuristic is strictly monotonic if $I^B$ is feasible in each iteration.
\end{lemma}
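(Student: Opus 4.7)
The plan is to show that the hypothesis ``$I^B$ is feasible in every iteration'' rules out exactly the pathology of Figure \ref{Fig:non_monotonic_heuristic}: a greedy step that adds a match but fails to increase the heat exchanged in $I^A$. I would prove the contrapositive form: if some iteration fails to make strict progress, then $I^B$ must be infeasible in that iteration.

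First I would unpack the decomposition. At the start of an iteration, the heuristic holds a set $M$ of chosen matches together with a heat vector $\vec{q}^A$ realizing the current $I^A=(H,C,T,\vec{\sigma}^A,\vec{\delta}^A)$, and $I^B=(H,C,T,\vec{\sigma}-\vec{\sigma}^A,\vec{\delta}-\vec{\delta}^A)$ is the residual instance. Strict monotonicity is the statement that after the next match $(i',j')$ is added, the recomputed $I^A$ satisfies $\sum_{i,s,j,t} q_{i,s,j,t}^A > \sum_{i,s,j,t} q_{i,s,j,t}^{A,\text{old}}$, i.e., the total heat carried by $M\cup\{(i',j')\}$ strictly exceeds the heat carried by $M$.

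Next I would exploit feasibility. Suppose the remaining heat $r=\sum_i\sum_s\sigma_{i,s}^B$ is positive and $I^B$ is feasible. Then there exists a feasible heat transfer $\vec{q}^*$ for $I^B$ with $\sum q^*_{i,s,j,t}=r>0$, supported on some nonempty set $M^*\subseteq H\times C$ of matches (any solution of a nonzero feasible instance must use at least one match). Pick any $(i',j')\in M^*$; the vector $\vec{q}^A+\vec{q}^*$ is then a feasible heat exchange through $M\cup M^*$ that strictly dominates $\vec{q}^A$, and in particular feasibly exchanges strictly more heat using any superset of $M$ that contains $(i',j')$. Hence $MHLP(M\cup\{(i',j')\})>MHLP(M)$ for at least one candidate $(i',j')$. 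Because each greedy selection rule of Section \ref{sec:greedy_packing} chooses a match that is optimal under a criterion monotone in additional heat (LHM-LP maximizes $MHLP(M\cup\{(i,j)\})$ directly; LHM and SS maximize the pairwise $MHG$ value computed with residual capacities; LFM maximizes a stream fraction whose denominator is fixed), the match actually chosen can only be at least as good, and therefore also delivers a strictly positive increment.

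The main obstacle will be justifying the final sentence uniformly across the four heuristic variants. For LHM-LP this is immediate because it explicitly compares $MHLP(M\cup\{(i,j)\})$ values. For LHM, LFM, and SS, I would argue that, because $I^B$ remains feasible and Algorithm MHG in Section \ref{Sec:MaxHeat_2Streams} respects the residual capacities $R_u$ that certify feasibility, the $MHG$ value for the witness match $(i',j')\in M^*$ is strictly positive; whichever match the heuristic's tie-breaking rule prefers therefore also has a strictly positive $MHG$ value, which is precisely the increment added to $I^A$. Combined with the observation that $r=0$ only when the algorithm has already terminated, this establishes strict monotonicity and completes the proof.
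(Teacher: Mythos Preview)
Your approach is the same as the paper's in spirit, but you skip the one ingredient the paper's two-sentence proof actually uses: the \emph{maximality} of $I^A$. The decomposition in Section~\ref{Subsection:MonotonicGreedyHeuristics} is defined so that $I^A$ carries the maximum heat exchangeable through $M$, i.e., $\sum q^A = MHLP(M)$. From this it follows that no feasible solution of $I^B$ can be supported entirely on $M$ (otherwise stacking it on $\vec{q}^A$ would beat $MHLP(M)$), so $M^{*}\setminus M\neq\emptyset$. You never invoke this, and when you write ``Pick any $(i',j')\in M^*$'' nothing prevents $(i',j')$ from already lying in $M$, in which case $M\cup\{(i',j')\}=M$ and no progress is possible.

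There is a second, related slip. The clause ``in particular feasibly exchanges strictly more heat using any superset of $M$ that contains $(i',j')$'' does not follow: the witness $\vec{q}^A+\vec{q}^*$ uses all of $M\cup M^*$, not the singleton extension $M\cup\{(i',j')\}$. To reach $MHLP(M\cup\{(i',j')\})>MHLP(M)$ you must (i) pick $(i',j')\in M^*\setminus M$, which needs the maximality argument above, and then (ii) restrict $\vec{q}^*$ to its $(i',j')$-component and observe that this restricted flow, added to $\vec{q}^A$, is still a valid partial heat exchange in $I$ through $M\cup\{(i',j')\}$ alone. Both steps are easy, but neither is in your write-up. Once you supply them, your closing paragraph about the four selection rules is fine, and indeed more careful than the paper, which stops at existence of a progress-making match and leaves the actual choice implicit.
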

\begin{proof}
Given that $I^A$ is of maximal heat (see Section \ref{Subsection:MonotonicGreedyHeuristics}), any match of $M$ is redundant in any feasible solution of $I^B$.
Since $I^B$ is feasible, there exists a match in $H\times C\setminus M$ whose selection increases the amount of heat exchanged in $I^A$.  
\end{proof}

Lemma \ref{Lem:FeasibilityConditions} states necessary and sufficient conditions for the feasibility of a minimum number of matches instance $I$. 
The first set of conditions ensures that heat always flows from the hot side to the same or lower temperature intervals on the cold side.
The last condition enforces heat conservation. 

\begin{lemma}
\label{Lem:FeasibilityConditions}
An instance $I$ of the minimum number of matches is feasible if and only if the following conditions hold.
\begin{itemize}
\item For each $u\in T\setminus\{k\}$, it is the case that $R_u\geq 0$, or equivalently 
\begin{equation*}
\sum_{i\in H}\sum_{s=1}^u\sigma_{i,s} \geq \sum_{j\in C}\sum_{t=1}^u\delta_{j,t}
\end{equation*} 
\item It holds that $R_k=0$, or equivalently 
\begin{equation*}
\sum_{i\in H}\sum_{s=1}^k\sigma_{i,s} = \sum_{j\in C}\sum_{t=1}^k\delta_{j,t}.
\end{equation*}
\end{itemize}
\end{lemma}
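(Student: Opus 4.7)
The statement is a standard feasibility characterization for the transshipment network underlying the problem, and the proof splits naturally into the two directions. My plan is to handle necessity by direct aggregation of the conservation constraints and to handle sufficiency by an explicit construction (in the spirit of Algorithm MHG of Section \ref{Sec:MaxHeat_2Streams}).

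For necessity, I would take any feasible $(\vec y, \vec q)$ and fix $u\in T$. Summing Equations (\ref{TransportationMIP_Eq:HotStreamConservation}) over $i\in H$ and $s=1,\ldots,u$ yields
\[
\sum_{i\in H}\sum_{s=1}^u\sigma_{i,s} \;=\; \sum_{i\in H}\sum_{s=1}^u\sum_{j\in C}\sum_{t\in T} q_{i,s,j,t},
\]
and Equations (\ref{TransportationMIP_Eq:ThermoConstraint}) force the inner sum over $t$ to run only over $t\ge s$. Splitting off the contribution with $t\le u$ and using Equations (\ref{TransportationMIP_Eq:ColdStreamConservation}) summed over $j\in C$ and $t=1,\ldots,u$ gives
\[
\sum_{i\in H}\sum_{s=1}^u\sigma_{i,s} \;=\; \sum_{j\in C}\sum_{t=1}^u\delta_{j,t} \;+\; \sum_{i,j}\sum_{s\le u<t} q_{i,s,j,t}.
\]
Since the last term is non-negative, $R_u\ge 0$ for every $u\in T\setminus\{k\}$. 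For $u=k$ the last term vanishes (there is no $t>k$), which yields $R_k=0$ and matches the heat conservation hypothesis already imposed in Section \ref{sec:preliminaries}.

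For sufficiency, I would construct a feasible $\vec q$ by a top-down greedy procedure essentially identical to Algorithm MHG in Section \ref{Sec:MaxHeat_2Streams}, but applied to the whole instance rather than to a single pair. First, for each interval $u$, transfer $\min\{\sigma_{i,u},\delta_{j,u}\}$-type flows within interval $u$ until all within-interval supply or within-interval demand is exhausted. Then process the residual heat in top-down order: for each $s=1,\ldots,k-1$ and each $t=s+1,\ldots,k$, push as much remaining supply $\sigma_{i,s}$ into remaining demand $\delta_{j,t}$ as the running residual capacities $R_s,R_{s+1},\ldots,R_{t-1}$ allow, updating all residuals accordingly. The feasibility hypothesis $R_u\ge 0$ for $u<k$ and $R_k=0$ guarantees that this process never gets stuck and terminates with $\sum_{j,t}q_{i,s,j,t}=\sigma_{i,s}$ and $\sum_{i,s}q_{i,s,j,t}=\delta_{j,t}$; assigning any $y_{i,j}\in\{0,1\}$ consistent with the produced $\vec q$ then yields a feasible MILP solution.

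The main obstacle is the sufficiency direction, and within it the verification that the greedy construction never exhausts a residual capacity prematurely. The cleanest way is to argue by a max-flow/min-cut reading of the network $N$ in Figure \ref{Fig:min_cost_flow_network}: any $(S,D)$-cut must either cut an arc out of $S$ (total capacity $\sum_i h_i$), an arc into $D$ (total capacity $\sum_j c_j$), or an internal layer whose minimum capacity is controlled exactly by the residuals $R_u$. Under the two stated conditions, every cut has capacity at least $\sum_i h_i=\sum_j c_j$, so a flow saturating the source exists, which is precisely a feasible $\vec q$. I expect to present the greedy construction as the primary argument and mention the min-cut interpretation as an alternative, since the greedy version also explains why Algorithm MHG is correct and sets up the residual-capacity constraints used throughout Section \ref{sec:greedy_packing}.
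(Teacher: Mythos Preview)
Your proposal is correct and in the same spirit as the paper's argument, but with a notable difference in the sufficiency direction.

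For necessity, you give an explicit aggregation of the conservation constraints, which is more detailed than the paper: the paper simply asserts that violating any condition makes constructing a feasible solution impossible, without writing out the summation you do.

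For sufficiency, the paper uses a slicker reduction than yours. Rather than running a greedy procedure on the full multi-stream instance, the paper collapses all hot streams into a single hot stream with supply $\sum_{i\in H}\sigma_{i,s}$ in interval $s$ and all cold streams into a single cold stream with demand $\sum_{j\in C}\delta_{j,t}$ in interval $t$, and then invokes Algorithm~MHG directly on this single pair. Since MHG's correctness is already asserted in Section~\ref{Sec:MaxHeat_2Streams} (as a corollary of max-flow/min-cut), and since with one hot and one cold stream the maximum exchangeable heat equals the total heat under the stated conditions, this yields an interval-to-interval flow $Q_{s,t}$ in one stroke; disaggregating back to $q_{i,s,j,t}$ (e.g.\ proportionally in $\sigma_{i,s}$ and $\delta_{j,t}$) is then immediate. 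Your approach works directly on the multi-stream instance and therefore must argue separately that the greedy never gets stuck across streams---exactly the step you flag as the main obstacle, and precisely what the aggregation trick sidesteps. Your min-cut sketch is in the right direction but stated too loosely: finite $(S,D)$-cuts in $N$ are not confined to a single layer; one has to characterize the cuts that avoid the infinite-capacity middle arcs, and that is where the residuals $R_u$ appear, but not quite as you phrase it.
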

\begin{proof}
To the first direction, a violation of a condition makes the task of constructing a feasible solution impossible.
To the opposite direction, Algorithm MHG in Section \ref{Alg:MaximumHeat} constructs a feasible solution for every instance satisfying the conditions;
it suffices to consider all the hot and cold streams as one large hot and large cold stream, respectively.
The single hot stream has heat load $\sum_{i\in H}\sigma_{i,s}$ in temperature interval $s\in T$ and the single cold stream has heat load $\sum_{j,t}\delta_{j,t}$ in temperature interval $t\in T$.
\end{proof}

Given a decomposition of an instance $I$ into instances $I^A$ and $I^B$, Lemma \ref{Lem:FeasibleDecomposition} shows that a careful construction of $I^A$ respecting the proposed heat residual capacities in Section \ref{Subsection:MonotonicGreedyHeuristics} implies that $I^B$ is also feasible.

\begin{lemma}
\label{Lem:FeasibleDecomposition}
Consider a decomposition of a feasible instance $I$ into the instances $I^A$ and $I^B$. 
Let $R$, $\vec{R}^A$ and $\vec{R}^B$ be the corresponding heat residual capacities.
If $I^A$ is feasible and it holds that $R_u^A\leq R_u$ for each $u\in T$, then $I^B$ is also feasible.
\end{lemma}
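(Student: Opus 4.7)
The plan is to reduce the feasibility of $I^B$ to the two explicit conditions provided by Lemma \ref{Lem:FeasibilityConditions}, and then verify each condition by exploiting the additivity of the heat residual capacities under the decomposition. Recall that $I$ decomposes into $I^A$ and $I^B$ via $\vec{\sigma}=\vec{\sigma}^A+\vec{\sigma}^B$ and $\vec{\delta}=\vec{\delta}^A+\vec{\delta}^B$, so for every $u\in T$ the definition of the heat residual capacity yields the identity $R_u = R_u^A + R_u^B$.

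First, I will handle the intermediate temperature intervals $u\in T\setminus\{k\}$. Using the identity above together with the hypothesis $R_u^A\leq R_u$, I immediately obtain $R_u^B = R_u - R_u^A \geq 0$, which is exactly the first feasibility condition of Lemma \ref{Lem:FeasibilityConditions} applied to $I^B$. Second, for $u=k$, the feasibility of $I$ gives $R_k=0$ by the second condition of Lemma \ref{Lem:FeasibilityConditions}, and the feasibility of $I^A$ gives $R_k^A=0$; combining these with the additive identity yields $R_k^B = R_k - R_k^A = 0$, which establishes the heat conservation condition for $I^B$.

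Both conditions of Lemma \ref{Lem:FeasibilityConditions} now hold for $I^B$, so $I^B$ is feasible, which completes the argument. There is no real technical obstacle here; the subtlety, if any, lies only in noting that heat residual capacities behave additively under an additive decomposition of the heat supply and demand vectors, which is immediate from the definition $R_u = \sum_{i\in H}\sum_{s=1}^u \sigma_{i,s} - \sum_{j\in C}\sum_{t=1}^u \delta_{j,t}$.
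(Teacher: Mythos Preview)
Your proof is correct and takes essentially the same approach as the paper: both reduce to the two conditions of Lemma~\ref{Lem:FeasibilityConditions} and use the additivity $R_u = R_u^A + R_u^B$ (the paper just expands this identity via the defining sums rather than stating it directly) together with $R_u^A \leq R_u$ for $u\neq k$ and $R_k = R_k^A = 0$ for the final interval.
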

\begin{proof}
To show that the Lemma is true, it suffices to show that $I^B$ satisfies the feasibility conditions of Lemma \ref{Lem:FeasibilityConditions}.  
Consider a temperature interval $u\in T\setminus\{k\}$.
Then,
\begin{align*}
R_u^A \leq R_u 
& \Leftrightarrow \sum_{i\in H} \sum_{s=1}^u \sigma_{i,s}^A - \sum_{j\in C} \sum_{t=1}^u \delta_{j,t}^A \leq \sum_{i\in H} \sum_{s=1}^u \sigma_{i,s} - \sum_{j\in C} \sum_{t=1}^u \delta_{j,t} \\
& \Leftrightarrow \sum_{i\in H} \sum_{s=1}^u \left( \sigma_{i,s} - \sigma_{i,s}^A \right) \geq \sum_{j\in C} \sum_{t=1}^u \left( \delta_{j,t} - \delta_{j,t}^A \right) \\
& \Leftrightarrow \sum_{i\in H} \sum_{s=1}^u \sigma_{i,s}^B \geq \sum_{j\in C} \sum_{t=1}^u \delta_{j,t}^B
\end{align*}
In the same fashion, the fact that $R_k=R_k^A=0$ implies that $R_k^B=0$.
Hence, $I^B$ is feasible.
\end{proof}

Given a set $M$ matches, the LP in Eqs.\ (\ref{Eq:MaxFractionLP_Objective})-(\ref{Eq:MaxFractionLP_Positiveness}) maximizes the total stream fraction that can be covered using only matches in $M$.
It is similar to the LP in Eqs.\ (\ref{Eq:MaxHeatLP_Objective})-(\ref{Eq:MaxHeatLP_Positiveness}) in Section \ref{Subsection:Largest_Heat_Match}, except that 
the maximum fraction objective function (\ref{Eq:MaxFractionLP_Objective}) replaces
the maximum heat objective function (\ref{Eq:MaxHeatLP_Objective}).

\begin{align}
\max & \sum_{(i,s,j,t) \in A(M)} \left( \frac{q_{i,s,j,t}}{h_i} + \frac{q_{i,s,j,t}}{c_j} \right) \label{Eq:MaxFractionLP_Objective} \\ 
& \sum_{(j,t)\in V_{i,s}^C(M)} q_{i,s,j,t} \leq \sigma_{i,s} & (i,s)\in V^H(M) \label{Eq:MaxFractionLP_HotStreamSupply}\\
& \sum_{(i,s)\in V_{j,t}^H(M)} q_{i,s,j,t} \leq \delta_{j,t} & (j,t)\in V^C(M) \label{Eq:MaxFractionLP_ColdStreamDemand}\\
& \sum_{(i,s,j,t)\in A_u(M)} q_{i,s,j,t}\leq R_u & u\in T \label{Eq:MaxFractionLP_ResidualCapacity}\\
& q_{i,s,j,t}\geq 0 & (i,s,j,t) \in A(M) \label{Eq:MaxFractionLP_Positiveness}
\end{align}

\medskip

The following theorem shows a performance guarantee for Algorithm LHM-LP using a standard packing argument.

\begingroup
\def\thetheorem{\ref{Thm:GreedyPackingRatio}}
\begin{theorem}
Algorithm LHM-LP is $O(\log n + \log \frac{h_{\max}}{\epsilon})$-approximate, where $\epsilon$ is the required precision.
\end{theorem}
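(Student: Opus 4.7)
The plan is to imitate the classical greedy analysis for submodular cover, applied to the monotone set function $f(M) := MHLP(M)$ defined by the LP in Section \ref{Sec:MaxHeat_MultipleIntervals}. Let $H_{\text{tot}} := \sum_{i\in H} h_i$ denote the total heat load, let $v^*$ denote the optimum number of matches, and let $M^*$ be an optimal set of matches, so $|M^*| = v^*$ and $f(M^*) = H_{\text{tot}}$. Observe that $f(\emptyset)=0$, $f$ is monotone non-decreasing, and $f$ is bounded above by $H_{\text{tot}}\le n\cdot h_{\max}$.

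The central step would be the following key lemma: at every iteration $t$ with residual heat $r_t := H_{\text{tot}} - f(M_t)>0$, the match $(i',j')$ selected by Algorithm LHM-LP satisfies
\[
f(M_t\cup\{(i',j')\}) - f(M_t) \;\ge\; \frac{r_t}{v^*}.
\]
To derive this, I would note that $f(M_t\cup M^*)\ge f(M^*)=H_{\text{tot}}$, so the total marginal gain of adding the elements of $M^*\setminus M_t$ to $M_t$, one at a time, is at least $r_t$. Using a diminishing-returns (submodularity-like) property of $f$ on the transportation network underlying the LP, each marginal gain with respect to the smaller set $M_t$ dominates the corresponding gain along the sequence, so by pigeonhole at least one $e\in M^*\setminus M_t$ achieves marginal gain at least $r_t/|M^*\setminus M_t|\ge r_t/v^*$ when added to $M_t$. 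Since Algorithm LHM-LP picks the match maximizing $MHLP(M_t\cup\{(i,j)\})$, the lemma follows.

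Applying the lemma iteratively yields the geometric recurrence $r_{t+1}\le r_t(1-1/v^*)\le r_t\, e^{-1/v^*}$ and hence $r_t \le H_{\text{tot}}\, e^{-t/v^*}$. The algorithm terminates once $r_t$ drops below the required numerical precision $\epsilon$. Enforcing $r_t\le\epsilon$ and using $H_{\text{tot}}\le n\cdot h_{\max}$ gives $t \le v^*\ln(n\cdot h_{\max}/\epsilon)=O\!\left(v^*\bigl(\log n+\log(h_{\max}/\epsilon)\bigr)\right)$. Dividing by $v^*$ yields the claimed approximation ratio; the polynomial running-time bound of $O(n^2m^2)$ LPs was already argued in the text preceding the theorem.

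The main obstacle is establishing the diminishing-returns property for $f(M)$, since the residual-capacity constraints couple matches across temperature intervals and standard submodularity of max flow in arc sets can fail in general networks. To handle this I would exploit the very restricted structure of the minimum number of matches network: every candidate arc enabled by a match lies between hot-interval and cold-interval nodes that are themselves already joined to the source and sink by fixed arcs, which typically rules out the ``two-arcs-needed-to-complete-a-path'' obstructions to submodularity. If a completely clean submodular inequality cannot be produced, the fallback is to manipulate optimal LP solutions of $M_t$, $M_t\cup\{e\}$, and $M_t\cup M^*$ directly, decomposing their flow differences into augmenting structures to extract the per-match marginal gain required by the lemma.
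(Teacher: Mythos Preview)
Your approach is essentially the same as the paper's: both prove the key inequality that the greedy match chosen at step $\ell$ has marginal gain at least $S_{\ell-1}/v^*$ (in your notation, $r_t/v^*$), and then convert this into a logarithmic bound on the number of iterations. The only technical difference is in that conversion. The paper invokes the integrality of the min-cost-flow polytope to argue that each marginal gain $E_\ell$ is an integer, bounds $\sum_\ell E_\ell/(Q-Q_{\ell-1})$ by the harmonic sum $\sum_{e=1}^{Q}1/e=O(\log Q)$ with $Q\le n\cdot h_{\max}$, and handles non-integer data by replacing unit granularity with $\epsilon$-granularity. You instead run the geometric recurrence $r_{t+1}\le r_t e^{-1/v^*}$ until $r_t\le\epsilon$. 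These two routes are interchangeable and yield the same $O(\log n+\log(h_{\max}/\epsilon))$ factor.

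On your main obstacle: you have located it precisely, and the paper does not resolve it beyond what you wrote. The paper's justification of the key inequality is literally the sentence ``the remaining heat can be transferred using at most $v^*$ additional matches by selecting the remaining matches of an optimal solution. Using a simple average argument we get that $\kappa_\ell\le v^*/S_{\ell-1}$.'' This is exactly the pigeonhole step you describe, and it tacitly assumes the diminishing-returns property of $f(M)=MHLP(M)$ that you flag: without it, the average of the sequential gains being at least $S_{\ell-1}/v^*$ does not imply that some single match has marginal gain $\ge S_{\ell-1}/v^*$ relative to $M_{\ell-1}$. The paper offers no separate argument for this, so your proposal is at the same level of rigor as the published proof, and your instinct to examine the transportation-network structure (source- and sink-side capacities with infinite-capacity middle arcs enabled per match, plus the cut-type residual constraints) is the right place to look if you want to close the gap.
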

\addtocounter{theorem}{-1}
\endgroup

\begin{proof}
Initialy, we show an approximation ratio of $O(\log n + \log h_{\max})$ for the special case of the problem with integer parameters.
Then, we generalize the result to decimal parameters.

We denote by $v$ the number of the algorithm's matches and by $v^*$ the number of matches in an optimal solution.
To upper bound $v$, we assign unitary costs to the transferred heat in the algorithm's solution as follows.
Algorithm LHM-LP constructs a feasible set $M$ of matches greedily.
At each iteration, LHM-LP selects a match whose addition in $M$ maximizes the heat that can be feasibly exchanged using the matches in $M$.
For $\ell=1,\ldots,v$, let $M_{\ell}$ be the set of selected matches at the end of the $\ell$-th iteration and $Q_{\ell}$ be the maximum amount of heat the can be feasibly exchanged between all streams using exactly the matches in $M_{\ell}$.
Before the algorithm begins, $M_0=\emptyset$ and $Q_0=0$.
The extra amount of transferable heat with the addition of the $\ell$-th chosen match is $E_{\ell}=Q_{\ell}-Q_{\ell-1}$, for $\ell=1,\ldots,v$.
We set the unitary cost to this part of the algorithm's total heat as $\kappa_{\ell}=\frac{1}{E_{\ell}}$.
Then, the algorithm's number of matches can be expressed:
\begin{equation}
\label{Eq:FractionalCost}
v = \sum_{\ell=1}^v \kappa_{\ell} \cdot E_{\ell}.
\end{equation}
Let $S_{\ell}$ be the total remaining heat to be transferred when the $\ell$-th iteration completes.
Then, $S_0 = Q$ and $S_{\ell}= Q - Q_{\ell}$, for $\ell=1,\ldots,v$, where $Q=\sum_{i=1}^nh_i$ is the total amount of heat.
Note that $S_v = 0$ because the algorithm produces a feasible solution.
Since the algorithm chooses the match that results in the highest increase of transferred heat in each iteration, it must be the case that $E_1\geq \ldots \geq E_v$ or equivalently $\kappa_1 \leq \ldots \leq \kappa_v$. 
At the end of the $\ell$-th iteration, the remaining heat can be transferred using at most $v^*$ additional matches by selecting the remaining matches of an optimal solution.
Using a simple average argument we get that $\kappa_{\ell} \leq \frac{v^*}{S_{\ell-1}}$, for each $\ell=1,\ldots,v$. 
Thus, Eq.\ (\ref{Eq:FractionalCost}) implies:
\begin{equation}
\label{Eq:GreedyPackingRatio}
v \leq \sum_{\ell=1}^v \left(\frac{v^*}{S_{\ell-1}}\right)\cdot E_{\ell} = \sum_{\ell=1}^v \left(\frac{E_{\ell}}{Q - Q_{\ell-1}}\right)\cdot v^*.
\end{equation}
By the integrality of the minimum cost network flow polytope, each value $E_{\ell}$ is an integer, for $\ell=1,\ldots,v$.
Hence,
\begin{equation*}
\frac{E_{\ell}}{Q-Q_{\ell-1}} = \sum_{e=1}^{E_{\ell}} \frac{1}{Q - Q_{\ell-1} } \leq \sum_{e=1}^{E_{\ell}} \frac{1}{Q - Q_{\ell-1} - e + 1 }.
\end{equation*}
Given that $Q_{\ell} = Q_{\ell-1} + E_{\ell}$,
\begin{equation}
\label{Eq:GreedyPackingHarmonic}
\frac{E_{\ell}}{Q-Q_{\ell-1}} \leq \sum_{e=Q_{\ell-1}}^{Q_{\ell} - 1} \frac{1}{Q-e}.
\end{equation}
Inequalities (\ref{Eq:GreedyPackingRatio}) and (\ref{Eq:GreedyPackingHarmonic}) imply:
\begin{equation*}
v \leq \left( \sum_{e=1}^Q \frac{1}{e} \right) \cdot v^*.
\end{equation*}
Using the asymptotic bound $\sum_{e=1}^Q \frac{1}{e}=O(\log Q)$ of harmonic series and the fact that $Q\leq n\cdot h_{max}$, we conclude that the algorithm is $O(\log n + \log h_{\max})$-approximate, where $h_{\max} = \max_{i\in H}\{h_i\}$ is the maximum heat of a hot stream.

Generalizing to decimal parameters, the algorithm is $O(\log n + \log\frac{h_{\max}}{\epsilon})$, where $\epsilon$ is the precision required for solving the problem instance.
The reasoning is the same except that, instead of considering integer units, we consider $\epsilon$ units to extend inequality (\ref{Eq:GreedyPackingHarmonic}).
\end{proof}

\section{Minimum Utility Cost Problem}
\label{App:Minimum_Utility_Cost}

This section shows how to obtain a minimum number of matches problem instance from a general heat exchanger network design problem instance via minimizing utility cost. We include this appendix for completeness, but this material is available elsewhere \citep{floudas}.
Table \ref{tbl:notation_min_utility} lists the notation.

\paragraph{General heat exchanger network design}
An instance of the general heat exchanger network design consists of a set $HS=\{1,2,\ldots,ns\}$ of hot streams, a set $CS=\{1,2,\ldots,ms\}$ of cold streams, a set $HU=\{1,2,\ldots,nu\}$ of hot utilities and a set $CU=\{1,2,\ldots,mu\}$ of cold utilities. 
Each hot stream $i \in HS$ (cold stream $j\in CS$) has initial inlet, outlet temperatures $T_{\text{in},i}^{HS}$, $T_{\text{out},i}^{HS}$ (resp. $T_{\text{in},j}^{CS}$, $T_{\text{out},j}^{CS}$) and flowrate heat capacity $FCp_{i}$ (resp. $FCp_{j}$).
Each hot utility $i\in HU$ (cold utility $j\in CU$) is associated with inlet, outlet temperatures $T_{\text{in},i}^{HU}$, $T_{\text{out},i}^{HU}$ (resp. $T_{\text{in},j}^{CU}$, $T_{\text{out},j}^{CU}$) and a cost $\kappa_{i}^{HU}$ (resp. $\kappa_j^{CU}$).

\singlespacing
\begin{longtable}{l l l}
\caption{Minimum Utility Cost Notation}\\
\toprule
Name & Description \\
\midrule
\multicolumn{2}{l}{\bf Cardinalities, Indices, Sets} \\
$ns,ms$ & Number of hot, cold streams \\
$nu,mu$ & Number of hot, cold utilities \\
$k$ & Number of temperature intervals \\
$i\in HS\cup HU$ & Hot stream, utility \\
$j\in CS\cup CU$ & Cold stream, utility \\
$t\in TI$ & Temperature interval \\
$HS,CS$ & Set of hot, cold streams \\
$HU,CU$ & Set of hot, cold utilities \\
$TI$ & Set of temperature intervals \\
\midrule
{\bf Parameters} & \\
$FCp_i,FCp_j$ & Flowrate heat capacity of hot stream $i$, cold stream $j$ \\
$T_{\text{in},i}^{HS},T_{\text{out},i}^{HS}$ & Inlet, outlet temperature of hot stream $i$ \\
$T_{\text{in},j}^{CS},T_{\text{out},j}^{CS}$ & Inlet, outlet temperature of cold stream $j$ \\
$T_{\text{in},i}^{HU},T_{\text{out},i}^{HU}$ & Inlet, outlet temperature of hot utility $i$ \\
$T_{\text{in},j}^{CU},T_{\text{out},j}^{CU}$ & Inlet, outlet temperature of cold utility $j$ \\
$\Delta T_{\min}$ & Minimum heat recovery approach temperature \\
$\kappa_i^{HU}, \kappa_j^{CU}$ & Unitary cost of hot utility $i$, cold utility $j$ \\
$\sigma_{i,t}^{HS}$ & Heat supply of hot stream $i$ in interval $t$ \\
$\delta_{j,t}^{CS}$ & Heat demand of cold stream $j$ in interval $t$ \\
\midrule
{\bf Variables} & \\
$\sigma_{i,t}^{HU}$ & Heat supply of hot utility $i$ in interval $t$ \\
$\delta_{j,t}^{CU}$ & Heat demand of cold utility $j$ in interval $t$ \\
$R_{t}$ & Residual heat exiting temperature interval $t$ \\
\bottomrule
\label{tbl:notation_min_utility}
\end{longtable}
\onehalfspacing

\paragraph{Temperature intervals}
The sequential method begins by computing a set $TI=\{1,2,\ldots,k\}$ of $k$ temperature intervals \citep{linnhoff:1978, ciric:1989}. 
A minimum heat recovery approach temperature $\Delta T_{\min}$ specifies the minimum temperature difference between two streams exchanging heat. 
In order to incorporate $\Delta T_{\min}$ in the problem's setting, we enforce that each temperature interval corresponds to a temperature range on the hot stream side shifted up by $\Delta T_{\min}$ with respect to to its corresponding temperature range on the cold stream side.
Let $TI^H$ and $TI^C$ be the temperature intervals on the hot and cold side, respectively.
Consider, on the hot side, all $k+1$ discrete temperature values $T_1>T_2>\ldots>T_{k+1}$ belonging to the set $\{T_{\text{in},i}^{HS}:i\in HS\}\cup\{T_{\text{in},i}^{HU}:i\in HU\}\cup\{T_{\text{in},j}^{CS}+\Delta T_{\min}:j\in CS\}\cup\{T_{\text{in},j}^{CU}+\Delta T_{\min}:j\in CU\}$. 
Then, we define $TI^H=\cup_{t=1}^k\{[T_t,T_{t+1}]\}$ and $TI^C=\cup_{t=1}^k\{[T_t-\Delta T_{\min},T_{t+1}-\Delta T_{\min}]\}$.
We set $TI=TI^H$ and we observe that $TI^C$ contains exactly the same temperature intervals with $TI$ shifted by $\Delta T_{\min}$.
Moreover, we set $\Delta T_t=T_t-T_{t+1}$, for $t\in TI$.

For each temperature interval $t\in TI$, we are now able to compute the quantity $\sigma_{i,t}^{HS}$ of heat load exported by hot stream $i\in HS$ as well as the amount $\delta_{j,t}^{CS}$ of heat load received by cold stream $j\in CS$ in $t\in TI$.
Specifically, for each $i\in HS$ and $t\in TI$, we set 
\begin{equation*}
\sigma_{i,t}^{HS}=
\left\{
	\begin{array}{ll}
		FCp_i\cdot\Delta T_t,  & \mbox{if $T_i^{\text{in}}\geq T_t$ and $T_i^{\text{out}}\leq T_{t+1}$} \\
		FCp_i\cdot(T_t-T_i^{\text{out}}), & \mbox{if $T_i^{\text{in}}\geq T_t$ and $T_i^{\text{out}}>T_{t+1}$} \\
		0, & \mbox{if $T_i^{\text{in}}<T_t$}
	\end{array}
\right.
\end{equation*}
Similarly, for each $j\in CS$ and $t\in TI$,
\begin{equation*}
\delta_{j,t}^{CS}=
\left\{
	\begin{array}{ll}
		FCp_j\cdot\Delta T_t,  & \mbox{if $T_j^{\text{in}}\leq T_{t+1}-\Delta T_{\min}$ and $T_j^{\text{out}}\geq T_t-\Delta T_{\min}$} \\
		FCp_j\cdot(T_j^{\text{out}}-(T_{t+1}-\Delta T_{\min})), & \mbox{if $T_j^{\text{in}}\leq T_{t+1}-\Delta T_{\min}$ and $T_j^{\text{out}}< T_t-\Delta T_{\min}$} \\
		0, & \mbox{if $T_j^{\text{in}}>T_{t+1}-\Delta T_{\min}$}
	\end{array}
\right.
\end{equation*}

\paragraph{Minimum utility cost} 

This problem is solved in order to compute the minimum amount of utility heat so that there is heat balance in the network. 
For each hot utility $i\in HU$ and cold utility $j\in CU$ the continuous variables $\sigma_{i,t}^{HU}$ and $\delta_{j,t}^{CU}$ correspond to the amount of heat of $i$ and $j$, respectively, in temperature interval $t$. 
The LP uses a heat residual variable $R_t$, for each $t\in TI$. 
Let $TI_i$ be the set of temperature intervals to which hot utility $i\in HU$ can transfer heat, feasibly. 
Similarly, let $TI_j$ be the set of temperature intervals from which cold utility $j\in CU$ can receive heat.
The minimum utility cost problem can be solved by using the following LP model (see \citet{cerdaetwesterberg:1983, papoulias:1983}).


{\small

\begin{alignat}{2}
\min & \sum_{i\in HU}\sum_{t\in TI} \kappa_i^{HU}\cdot \sigma_{i,t}^{HU} + \sum_{j\in CU}\sum_{t\in TI} \kappa_j^{CU}\cdot \delta_{j,t}^{CU} \label{minut:ob} \\
& \sum_{i\in HS} \sigma_{i,t}^{HS} + \sum_{i\in HU} \sigma_{i,t}^{HU} +R_t = \sum_{j\in CS} \delta_{j,t}^{CS} + \sum_{j\in CU} \delta_{j,t}^{CU} + R_{t+1} \;\;\;\; && t\in TI \label{minut:eb} \\
& R_1, R_{k+1} = 0 && \label{minut:res} \\
& \sigma_{i,t}^{HU} = 0 && i\in HU, t\in TI\setminus TI_i \label{minut:hot} \\
& \delta_{j,t}^{CU} = 0 && j\in CU, t\in TI\setminus TI_j \label{minut:cold} \\
& \sigma_{i,t}^{HU}, \delta_{j,t}^{CU}, R_t \geq 0 && i\in HU, j\in CU, t\in TI
\end{alignat}

}

Expression \ref{minut:ob} minimizes the utility cost. 
Constraints \ref{minut:eb} and \ref{minut:res} ensure energy balance. 
Constraints \ref{minut:hot} and \ref{minut:cold} ensure that heat flows from a temperature interval to the same or a lower temperature interval. 

\paragraph{Minimum number of matches} 

Given an optimal solution of the minimum utility cost problem, we obtain an instance of the minimum number of matches problem as follows.
All utilities are considered as streams, i.e.\  $H=HS\cup HU$, $C=CS\cup CU$, $n=ns+nu$ and $m=ms+mu$. 
Furthermore, $T=TI$.
Finally, for each $i\in H$ and $t\in T$ the parameter $\sigma_{i,t}$ is equal to $\sigma_{i,t}^{HS}$ or $\sigma_{i,t}^{HU}$ depending on whether $i$ was originally a hot stream or utility.
The parameters $\delta_{j,t}$ are obtained similarly, for each $j\in C$ and $t\in T$.



\newpage

\section{Experimental Results}
\label{App:Experimental_Results}

\begin{table}[ht!] 
\scriptsize 
\begin{adjustbox}{center} 
\begin{tabular}{|lrrrrrrr|}
\hline 
\multirow{2}{*}{\textbf{Test Case}} & \multirow{2}{1cm}{\textbf{Hot Streams}} & \multirow{2}{1cm}{\textbf{Cold Streams}} & \multirow{2}{1.3cm}{\textbf{Temp.\ Intervals}} & \multirow{2}{1.1cm}{\textbf{Binary Vars}} & \multirow{2}{1.4cm}{\textbf{Continuous Vars}} & \multirow{2}{*}{{\textbf{Constraints}}} & \multirow{2}{*} {\textbf{Ref}} \\ 
& & & & & & & \\
\hline 
\multicolumn{8}{|l|} {\textbf{\cite{furman:2004} Test Set}} \\ 
\texttt{10sp-la1 } & 5 & 6 & 9 & 30 & 315 & 134 & \cite{linhoff:1989} \\ 
\texttt{10sp-ol1 } & 5 & 7 & 8 & 35 & 320 & 136 & \cite{shenoy:1995} \\ 
\texttt{10sp1    } & 5 & 6 & 9 & 30 & 315 & 134 & \cite{pho:1973} \\ 
\texttt{12sp1    } & 10 & 3 & 13 & 30 & 520 & 209 & \cite{sargent:1978} \\ 
\texttt{14sp1    } & 7 & 8 & 14 & 56 & 882 & 273 & \cite{sargent:1978} \\ 
\texttt{15sp-tkm } & 10 & 7 & 15 & 70 & 1200 & 335 & \cite{kokossis:2000} \\ 
\texttt{20sp1    } & 10 & 11 & 20 & 110 & 2400 & 540 & \cite{sargent:1978} \\ 
\texttt{22sp-ph  } & 12 & 12 & 18 & 144 & 2808 & 588 & \cite{polley:1999} \\ 
\texttt{22sp1    } & 12 & 12 & 17 & 144 & 2652 & 564 & \cite{miguel:1998} \\ 
\texttt{23sp1    } & 11 & 13 & 19 & 143 & 2926 & 610 & \cite{mocsny:1984} \\ 
\texttt{28sp-as1 } & 17 & 13 & 15 & 221 & 3570 & 688 & \cite{smith:1989} \\ 
\texttt{37sp-yfyv} & 21 & 17 & 32 & 357 & 12096 & 1594 & \cite{yu:2000} \\ 
\texttt{4sp1     } & 3 & 3 & 5 & 9 & 60 & 42 & \cite{lee:1970} \\ 
\texttt{6sp-cf1  } & 3 & 4 & 5 & 12 & 75 & 50 & \cite{ciric:1989} \\ 
\texttt{6sp-gg1  } & 3 & 3 & 5 & 9 & 60 & 42 & \cite{gundersen:1990} \\ 
\texttt{6sp1     } & 3 & 4 & 6 & 12 & 90 & 57 & \cite{lee:1970} \\ 
\texttt{7sp-cm1  } & 4 & 5 & 8 & 20 & 192 & 96 & \cite{colberg:1990} \\ 
\texttt{7sp-s1   } & 7 & 2 & 8 & 14 & 168 & 93 & \cite{shenoy:1995} \\ 
\texttt{7sp-torw1} & 5 & 4 & 7 & 20 & 175 & 88 & \cite{trivedi:1990} \\ 
\texttt{7sp1     } & 3 & 5 & 6 & 15 & 108 & 66 & \cite{masso:1969} \\ 
\texttt{7sp2     } & 4 & 4 & 7 & 16 & 140 & 76 & \cite{masso:1969} \\ 
\texttt{7sp4     } & 7 & 2 & 8 & 14 & 168 & 93 & \cite{dolan:1990} \\ 
\texttt{8sp-fs1  } & 6 & 4 & 8 & 24 & 240 & 110 & \cite{farhanieh:1990} \\ 
\texttt{8sp1     } & 5 & 5 & 8 & 25 & 240 & 110 & \cite{sargent:1978} \\ 
\texttt{9sp-al1  } & 5 & 6 & 9 & 30 & 315 & 134 & \cite{ahmad:1989} \\ 
\texttt{9sp-has1 } & 6 & 5 & 9 & 30 & 324 & 135 & \cite{hall:1990} \\ 
\hline 
\multicolumn{8}{|l|} {\textbf{\cite{minlp,chen:2015} Test Set}} \\ 
\texttt{balanced10  } & 12 & 11 & 20 & 132 & 2880 & 604 &  \\ 
\texttt{balanced12  } & 14 & 13 & 23 & 182 & 4508 & 817 &  \\ 
\texttt{balanced15  } & 17 & 16 & 28 & 272 & 8092 & 1213 &  \\ 
\texttt{balanced5   } & 7 & 6 & 12 & 42 & 588 & 205 &  \\ 
\texttt{balanced8   } & 10 & 9 & 16 & 90 & 1600 & 404 &  \\ 
\texttt{unbalanced10} & 12 & 11 & 20 & 132 & 2880 & 604 &  \\ 
\texttt{unbalanced15} & 17 & 16 & 28 & 272 & 8092 & 1213 &  \\ 
\texttt{unbalanced17} & 19 & 18 & 32 & 342 & 11552 & 1545 &  \\ 
\texttt{unbalanced20} & 22 & 21 & 36 & 462 & 17424 & 2032 &  \\ 
\texttt{unbalanced5 } & 7 & 6 & 12 & 42 & 588 & 205 &  \\ 
\hline
\multicolumn{8}{|l|} {\textbf{\cite{grossman:2017} Test Set}} \\ 
\texttt{balanced12\_random0  } & 14 & 13 & 23 & 182 & 4508 & 817 &  \\ 
\texttt{balanced12\_random1  } & 14 & 13 & 23 & 182 & 4508 & 817 &  \\ 
\texttt{balanced12\_random2  } & 14 & 13 & 23 & 182 & 4508 & 817 &  \\ 
\texttt{balanced15\_random0  } & 17 & 16 & 28 & 272 & 8092 & 1213 &  \\ 
\texttt{balanced15\_random1  } & 17 & 16 & 28 & 272 & 8092 & 1213 &  \\ 
\texttt{balanced15\_random2  } & 17 & 16 & 28 & 272 & 8092 & 1213 &  \\ 
\texttt{unbalanced17\_random0} & 19 & 18 & 32 & 342 & 11552 & 1545 &  \\ 
\texttt{unbalanced17\_random1} & 19 & 18 & 32 & 342 & 11552 & 1545 &  \\ 
\texttt{unbalanced17\_random2} & 19 & 18 & 32 & 342 & 11552 & 1545 &  \\ 
\texttt{unbalanced20\_random0} & 22 & 21 & 36 & 462 & 17424 & 2032 &  \\ 
\texttt{unbalanced20\_random1} & 22 & 21 & 36 & 462 & 17424 & 2032 &  \\ 
\texttt{unbalanced20\_random2} & 22 & 21 & 36 & 462 & 17424 & 2032 &  \\ 
\hline 
\end{tabular} 
\end{adjustbox} 
\vspace*{-0.2cm} 
\caption{Problem sizes of the test cases. The number of variables and constraints are computed with respect to the transshipment model. All test cases are available online \citep{source_code}.} 
\label{Table:Problem_Sizes} 
\end{table}

\begin{table} 
\scriptsize 
\begin{adjustbox}{center} 
\begin{tabular}{|lrrrrrrrrrrrr|}
\hline 
\multirow{2}{*} {\textbf{Test Case}} & \multicolumn{3}{|c|} {\textbf{CPLEX Transportation}} & \multicolumn{3}{c|} {\textbf{CPLEX Transshipment}} & \multicolumn{3}{c|} {\textbf{Gurobi Transportation}} & \multicolumn{3}{c|} {\textbf{Gurobi Transshipment}} \\ 
\cline{2-13} & \multicolumn{1}{|c} {Value} & Time (s) & Gap & \multicolumn{1}{|c} {Value} & Time (s) & Gap & \multicolumn{1}{|c} {Value} & Time (s) & Gap & \multicolumn{1}{|c} {Value} & Time (s) & Gap \\ 
\hline 
\multicolumn{13}{|l|} {\textbf{\cite{furman:2004} Test Set (30min time limit)}} \\ 
\texttt{10sp-la1 } &         12  &           0.04  &      & \textbf{12} & \textbf{ 0.03} &               & 12 &   0.10 &      &         12  &           0.09  &               \\ 
\texttt{10sp-ol1 } &         14  &           0.06  &      & \textbf{14} & \textbf{ 0.03} &               & 14 &   0.13 &      &         14  &           0.11  &               \\ 
\texttt{10sp1    } &         10  &           0.51  &      & \textbf{10} & \textbf{ 0.05} &               & 10 &   0.24 &      &         10  &           0.13  &               \\ 
\texttt{12sp1    } &         12  &           0.08  &      & \textbf{12} & \textbf{ 0.05} &               & 12 &   0.16 &      &         12  &           0.11  &               \\ 
\texttt{14sp1    } &         14  &         145.50  &      & \textbf{14} & \textbf{41.23} &               & 14 & 170.45 &      &         14  &         126.71  &               \\ 
\texttt{15sp-tkm } &         19  &           0.17  &      & \textbf{19} & \textbf{ 0.07} &               & 19 &   0.28 &      &         19  &           0.14  &               \\ 
\texttt{20sp1    } &         19  &              *  & 19\% &         19  &             *  &         19\%  & 19 &      * & 21\% & \textbf{19} & \textbf{     *} & \textbf{15\%} \\ 
\texttt{22sp-ph  } &         26  &           0.25  &      & \textbf{26} & \textbf{ 0.04} &               & 26 &   0.44 &      &         26  &           0.13  &               \\ 
\texttt{22sp1    } &         25  &              *  & 12\% & \textbf{25} & \textbf{    *} & \textbf{11\%} & 25 &      * & 12\% &         25  &              *  &         12\%  \\ 
\texttt{23sp1    } &         23  &              *  & 28\% &         23  &             *  &         28\%  & 23 &      * & 30\% & \textbf{23} & \textbf{     *} & \textbf{26\%} \\ 
\texttt{28sp-as1 } &         30  &           0.19  &      & \textbf{30} & \textbf{ 0.05} &               & 30 &   0.44 &      &         30  &           0.12  &               \\ 
\texttt{37sp-yfyv} &         36  &          54.80  &      &         36  &          7.36  &               & 36 &  20.40 &      & \textbf{36} & \textbf{  6.02} &               \\ 
\texttt{4sp1     } &          5  &           0.03  &      & \textbf{ 5} & \textbf{ 0.02} &               &  5 &   0.10 &      &          5  &           0.08  &               \\ 
\texttt{6sp-cf1  } & \textbf{ 6} &  \textbf{ 0.03} &      & \textbf{ 6} & \textbf{ 0.03} &               &  6 &   0.09 &      &          6  &           0.09  &               \\ 
\texttt{6sp-gg1  } & \textbf{ 3} &  \textbf{ 0.02} &      & \textbf{ 3} & \textbf{ 0.02} &               &  3 &   0.09 &      &          3  &           0.08  &               \\ 
\texttt{6sp1     } &          6  &           0.03  &      & \textbf{ 6} & \textbf{ 0.02} &               &  6 &   0.30 &      &          6  &           0.09  &               \\ 
\texttt{7sp-cm1  } & \textbf{10} &  \textbf{ 0.02} &      & \textbf{10} & \textbf{ 0.02} &               & 10 &   0.09 &      &         10  &           0.08  &               \\ 
\texttt{7sp-s1   } & \textbf{10} &  \textbf{ 0.02} &      & \textbf{10} & \textbf{ 0.02} &               & 10 &   0.09 &      &         10  &           0.08  &               \\ 
\texttt{7sp-torw1} &         10  &            0.03 &      & \textbf{10} & \textbf{ 0.02} &               & 10 &   0.10 &      &         10  &           0.09  &               \\ 
\texttt{7sp1     } & \textbf{ 7} &  \textbf{ 0.03} &      &          7  &          0.04  &               &  7 &   0.09 &      &          7  &           0.08  &               \\ 
\texttt{7sp2     } &          7  &           0.05  &      & \textbf{ 7} & \textbf{ 0.03} &               &  7 &   0.09 &      &          7  &           0.09  &               \\ 
\texttt{7sp4     } &          8  &           0.03  &      & \textbf{ 8} & \textbf{ 0.02} &               &  8 &   0.10 &      &          8  &           0.08  &               \\ 
\texttt{8sp-fs1  } &         11  &           0.03  &      & \textbf{11} & \textbf{ 0.02} &               & 11 &   0.10 &      &         11  &           0.08  &               \\ 
\texttt{8sp1     } &          9  &           0.04  &      & \textbf{ 9} & \textbf{ 0.03} &               &  9 &   0.14 &      &          9  &           0.10  &               \\ 
\texttt{9sp-al1  } &          12 &           0.04  &      & \textbf{12} & \textbf{ 0.03} &               & 12 &   0.11 &      &         12  &           0.09  &               \\ 
\texttt{9sp-has1 } & \textbf{13} &  \textbf{ 0.04} &      & \textbf{13} & \textbf{ 0.04} &               & 13 &   0.12 &      &         13  &           0.09  &               \\ 
\hline                                                                          
\multicolumn{13}{|l|} {\textbf{\cite{minlp,chen:2015} Test Set (2h time limit)}} \\ 
\texttt{balanced10  } & 25 &      * &  6\% &         24  &         1607.14  &               & 25 &      * &  4\% & \textbf{24} & \textbf{358.18} &               \\ 
\texttt{balanced12  } & 30 &      * & 16\% & \textbf{28} & \textbf{      *} & \textbf{ 7\%} & 29 &      * & 13\% &         29  &              *  &         10\%  \\ 
\texttt{balanced15  } & 36 &      * & 19\% &         37  &               *  &         17\%  & \textbf{35} & \textbf{     *} & \textbf{17\%} &         36  &              *  &         16\%  \\ 
\texttt{balanced5   } & 14 &   0.27 &      &         14  &            0.20  &               & 14 &   0.43 &      &         14  &           0.23  &               \\ 
\texttt{balanced8   } & 20 & 180.84 &      &         20  &           69.16  &               & 20 & 997.01 &      &         20  &         248.08  &               \\ 
\texttt{unbalanced10} & 25 &  36.24 &      &         25  &            7.45  &               & 25 &  46.81 &      & \textbf{25} & \textbf{ 15.97} &               \\ 
\texttt{unbalanced15} & 36 &      * &  8\% &         36  &               *  &          4\%  & 36 &      * &  8\% & \textbf{36} & \textbf{     *} & \textbf{ 4\%} \\ 
\texttt{unbalanced17} & 43 &      * & 15\% &         43  &               *  &         11\%  & 43 &      * & 13\% & \textbf{43} & \textbf{     *} & \textbf{ 9\%} \\ 
\texttt{unbalanced20} & 55 &      * & 22\% &         51  &               *  &         13\%  & 51 &      * & 17\% & \textbf{50} & \textbf{     *} & \textbf{10\%} \\ 
\texttt{unbalanced5 } & 16 &   0.09 &      & \textbf{16} & \textbf{   0.05} &               & 16 &   0.26 &      &         16  &           0.13  &               \\ 
\hline 
\multicolumn{13}{|l|} {\bf {\cite{grossman:2017} Test Set (4h time limit)}} \\ 
\texttt{balanced12\_random0}   & 29 & * & 13\% & \textbf{28} & \textbf{*} & \textbf{ 7\%} & 29 & * & 13\% & \textbf{28} & \textbf{*} & \textbf{ 7\%} \\ 
\texttt{balanced12\_random1}   & 29 & * & 13\% & \textbf{29} & \textbf{*} & \textbf{ 9\%} & 30 & * & 13\% &         29  &         *  &         10\%  \\ 
\texttt{balanced12\_random2}   & 30 & * & 16\% & \textbf{29} & \textbf{*} & \textbf{10\%} & 29 & * & 10\% & \textbf{29} & \textbf{*} & \textbf{10\%} \\ 
\texttt{balanced15\_random0}   & 36 & * & 18\% &         36  &         *  &         15\%  & 35 & * & 14\% & \textbf{36} & \textbf{*} & \textbf{13\%} \\ 
\texttt{balanced15\_random1}   & 36 & * & 18\% &         36  &         *  &         15\%  & 35 & * & 17\% & \textbf{35} & \textbf{*} & \textbf{11\%} \\ 
\texttt{balanced15\_random2}   & 36 & * & 17\% &         35  &         *  &         12\%  & 36 & * & 16\% & \textbf{35} & \textbf{*} & \textbf{11\%} \\ 
\texttt{unbalanced17\_random0} & 44 & * & 16\% & \textbf{43} & \textbf{*} & \textbf{ 9\%} & 43 & * & 13\% & \textbf{43} & \textbf{*} & \textbf{ 9\%} \\ 
\texttt{unbalanced17\_random1} & 44 & * & 16\% &         44  &         *  &         10\%  & 44 & * & 15\% & \textbf{43} & \textbf{*} & \textbf{ 6\%} \\ 
\texttt{unbalanced17\_random2} & 43 & * & 13\% & \textbf{43} & \textbf{*} & \textbf{ 9\%} & 43 & * & 13\% & \textbf{43} & \textbf{*} & \textbf{ 9\%} \\ 
\texttt{unbalanced20\_random0} & 51 & * & 16\% & \textbf{51} & \textbf{*} & \textbf{12\%} & 52 & * & 19\% &         52  &         *  &         13\%  \\ 
\texttt{unbalanced20\_random1} & 52 & * & 18\% &         52  &         *  &         15\%  & 52 & * & 19\% & \textbf{51} & \textbf{*} & \textbf{11\%} \\ 
\texttt{unbalanced20\_random2} & 51 & * & 16\% &         52  &         *  &         14\%  & 52 & * & 19\% & \textbf{50} & \textbf{*} & \textbf{10\%} \\ 
\hline 
\end{tabular} 
\end{adjustbox} 
\vspace*{-0.2cm} 
\caption{Computational results using exact solvers CPLEX 12.6.3 and Gurobi 6.5.2 with relative gap 4\%. Relative gap is defined (best incumbent - best lower bound) / best incumbent and  * indicates timeout. {\textbf{Bold} values mark the best solver result.} The transshipment formulation performs better than the transportation model: the transshipment model solves one additional problem (\texttt{balanced10}) and performs as well or better than the transportation model on 46 of the 48 test cases (with respect to time or gap closed). CPLEX solves the small models slightly faster than Gurobi while Gurobi closes more of the optimality gap for large problems. All exact method results are available online \citep{source_code}.} 
\label{Table:Exact_Methods} 
\end{table}

\begin{table} 
\scriptsize 
\color{black}
\begin{adjustbox}{center} 
\begin{tabular}{|ccccccccccccc|}
\hline 
\multirow{2}{*} {\textbf{Test Case}} & \multicolumn{3}{|c|} {\textbf{CPLEX Transportation}} & \multicolumn{3}{c|} {\textbf{CPLEX Reduced Transportation}} & \multicolumn{3}{c|} {\textbf{Gurobi Transportation}} & \multicolumn{3}{c|} {\textbf{Gurobi Reduced Transportation}} \\ 
\cline{2-13} & \multicolumn{1}{|c} {\textbf{Value}} & \textbf{Time} & \textbf{Gap} & \multicolumn{1}{|c} {\textbf{Value}} & \textbf{Time} & \textbf{Gap} & \multicolumn{1}{|c} {\textbf{Value}} & \textbf{Time} & \textbf{Gap} & \multicolumn{1}{|c} {\textbf{Value}} & \textbf{Time} & \textbf{Gap} \\ 
\hline 
\multicolumn{13}{|l|} {\textbf{\cite{furman:2004} Test Set(30min time limit)}} \\ 
\texttt{10sp-la1} & 12 & 0.04 &  & 12 & 0.05 &  & 12 & 0.10 &  & 12 & 0.13 &  \\ 
\texttt{10sp-ol1} & 14 & 0.06 &  & 14 & 0.04 &  & 14 & 0.13 &  & 14 & 0.11 &  \\ 
\texttt{10sp1} & 10 & 0.51 &  & 10 & 0.65 &  & 10 & 0.24 &  & 10 & 0.13 &  \\ 
\texttt{12sp1} & 12 & 0.08 &  & 12 & 0.08 &  & 12 & 0.16 &  & 12 & 0.13 &  \\ 
\texttt{14sp1} & 14 & 145.50 &  & 14 & 144.87 &  & 14 & 170.45 &  & 14 & 172.62 &  \\ 
\texttt{15sp-tkm} & 19 & 0.17 &  & 19 & 0.14 &  & 19 & 0.28 &  & 19 & 0.20 &  \\ 
\texttt{20sp1} & 19 & * & 19\% & 19 & * & 19\% & 19 & * & 21\% & 19 & * & 21\% \\ 
\texttt{22sp-ph} & 26 & 0.25 &  & 26 & 0.13 &  & 26 & 0.44 &  & 26 & 0.24 &  \\ 
\texttt{22sp1} & 25 & * & 12\% & 25 & * & 12\% & 25 & * & 12\% & 25 & * & 12\% \\ 
\texttt{23sp1} & 23 & * & 28\% & 23 & * & 28\% & 23 & * & 30\% & 23 & * & 30\% \\ 
\texttt{28sp-as1} & 30 & 0.19 &  & 30 & 0.09 &  & 30 & 0.44 &  & 30 & 0.23 &  \\ 
\texttt{37sp-yfyv} & 36 & 54.80 &  & 36 & 32.86 &  & 36 & 20.40 &  & 36 & 89.50 &  \\ 
\texttt{4sp1} & 5 & 0.03 &  & 5 & 0.02 &  & 5 & 0.10 &  & 5 & 0.08 &  \\ 
\texttt{6sp-cf1} & 6 & 0.03 &  & 6 & 0.02 &  & 6 & 0.09 &  & 6 & 0.08 &  \\ 
\texttt{6sp-gg1} & 3 & 0.02 &  & 3 & 0.02 &  & 3 & 0.09 &  & 3 & 0.08 &  \\ 
\texttt{6sp1} & 6 & 0.03 &  & 6 & 0.02 &  & 6 & 0.30 &  & 6 & 0.08 &  \\ 
\texttt{7sp-cm1} & 10 & 0.02 &  & 10 & 0.02 &  & 10 & 0.09 &  & 10 & 0.09 &  \\ 
\texttt{7sp-s1} & 10 & 0.02 &  & 10 & 0.02 &  & 10 & 0.09 &  & 10 & 0.12 &  \\ 
\texttt{7sp-torw1} & 10 & 0.03 &  & 10 & 0.03 &  & 10 & 0.10 &  & 10 & 0.09 &  \\ 
\texttt{7sp1} & 7 & 0.03 &  & 7 & 0.23 &  & 7 & 0.09 &  & 7 & 0.08 &  \\ 
\texttt{7sp2} & 7 & 0.05 &  & 7 & 0.04 &  & 7 & 0.09 &  & 7 & 0.09 &  \\ 
\texttt{7sp4} & 8 & 0.03 &  & 8 & 0.02 &  & 8 & 0.10 &  & 8 & 0.10 &  \\ 
\texttt{8sp-fs1} & 11 & 0.03 &  & 11 & 0.05 &  & 11 & 0.10 &  & 11 & 0.10 &  \\ 
\texttt{8sp1} & 9 & 0.04 &  & 9 & 0.07 &  & 9 & 0.14 &  & 9 & 0.13 &  \\ 
\texttt{9sp-al1} & 12 & 0.04 &  & 12 & 0.03 &  & 12 & 0.11 &  & 12 & 0.10 &  \\ 
\texttt{9sp-has1} & 13 & 0.04 &  & 13 & 0.04 &  & 13 & 0.12 &  & 13 & 0.10 &  \\ 
\hline 
\multicolumn{13}{|l|} {\textbf{\cite{minlp,chen:2015} Test Set (2h time limit)}} \\ 
\texttt{balanced10} & 25 & * & 6\% & 25 & * & 6\% & 25 & * & 4\% & 25 & * & 8\% \\ 
\texttt{balanced12} & 30 & * & 16\% & 30 & * & 16\% & 29 & * & 13\% & 29 & * & 13\% \\ 
\texttt{balanced15} & 36 & * & 19\% & 37 & * & 21\% & 35 & * & 17\% & 36 & * & 19\% \\ 
\texttt{balanced5} & 14 & 0.27 &  & 14 & 0.26 &  & 14 & 0.43 &  & 14 & 0.33 &  \\ 
\texttt{balanced8} & 20 & 180.84 &  & 20 & 885.66 &  & 20 & 997.01 &  & 20 & 214.00 &  \\ 
\texttt{unbalanced10} & 25 & 36.24 &  & 25 & 64.92 &  & 25 & 46.81 &  & 25 & 61.19 &  \\ 
\texttt{unbalanced15} & 36 & * & 8\% & 36 & * & 9\% & 36 & * & 8\% & 36 & * & 8\% \\ 
\texttt{unbalanced17} & 43 & * & 15\% & 43 & * & 15\% & 43 & * & 13\% & 43 & * & 13\% \\ 
\texttt{unbalanced20} & 55 & * & 22\% & 53 & * & 21\% & 51 & * & 17\% & 53 & * & 20\% \\ 
\texttt{unbalanced5} & 16 & 0.09 &  & 16 & 0.08 &  & 16 & 0.26 &  & 16 & 0.22 &  \\ 
\hline 
\multicolumn{13}{|l|} {\textbf{\cite{grossman:2017} Test Set (4h time limit)}} \\ 
\texttt{balanced12\_random0} & 29 & * & 13\% & 28 & * & 8\% & 29 & * & 13\% & 29 & * & 13\% \\ 
\texttt{balanced12\_random1} & 29 & * & 13\% & 29 & * & 11\% & 30 & * & 13\% & 29 & * & 10\% \\ 
\texttt{balanced12\_random2} & 30 & * & 16\% & 29 & * & 13\% & 29 & * & 10\% & 28 & * & 7\% \\ 
\texttt{balanced15\_random0} & 36 & * & 18\% & 36 & * & 18\% & 35 & * & 14\% & 36 & * & 19\% \\ 
\texttt{balanced15\_random1} & 36 & * & 18\% & 36 & * & 18\% & 35 & * & 17\% & 35 & * & 17\% \\ 
\texttt{balanced15\_random2} & 36 & * & 17\% & 37 & * & 19\% & 36 & * & 16\% & 36 & * & 19\% \\ 
\texttt{unbalanced17\_random0} & 44 & * & 16\% & 43 & * & 14\% & 43 & * & 13\% & 43 & * & 13\% \\ 
\texttt{unbalanced17\_random1} & 44 & * & 16\% & 44 & * & 15\% & 44 & * & 15\% & 44 & * & 15\% \\ 
\texttt{unbalanced17\_random2} & 43 & * & 13\% & 43 & * & 14\% & 43 & * & 13\% & 44 & * & 13\% \\ 
\texttt{unbalanced20\_random0} & 51 & * & 16\% & 51 & * & 16\% & 52 & * & 19\% & 52 & * & 19\% \\ 
\texttt{unbalanced20\_random1} & 52 & * & 18\% & 52 & * & 18\% & 52 & * & 19\% & 52 & * & 17\% \\ 
\texttt{unbalanced20\_random2} & 51 & * & 16\% & 51 & * & 17\% & 52 & * & 19\% & 53 & * & 20\% \\ 
\hline 
\end{tabular} 
\end{adjustbox} 
\vspace*{-0.2cm} 
\caption{\color{black} Computational results using exact solvers CPLEX 12.6.3 and Gurobi 6.5.2 with relative gap 4\% for solving the transportation and reduced transportation MILP models. The relative gap is (best incumbent - best lower bound) / best incumbent and  * indicates timeout. All exact method results are available online in \cite{source_code}.} 
\label{Table:Transportation_Models_Comparison} 
\end{table}

\begin{table} 
\scriptsize 
\begin{adjustbox}{center} 
\begin{tabular}{|lrrrrrrrrrr|}
\hline 
\multirow{2}{*} {\textbf{Test Case}} & \multicolumn{3}{|c|} {\textbf{Relaxation Rounding}} & \multicolumn{2}{c|} {\textbf{Water Filling}} & \multicolumn{4}{c|} {\textbf{Greedy Packing}} & \multirow{2}{*} {\textbf{CPLEX}} \\ 
\cline{2-10} & \multicolumn{1}{|c} {\textbf{FLPR}} & \textbf{LRR} & \multicolumn{1}{c|} {\textbf{CRR}} & \textbf{WFG} & \multicolumn{1}{c|} {\textbf{WFM}} & \textbf{LHM} & \textbf{LFM} & \textbf{LHM-LP} & \multicolumn{1}{c|} {\textbf{SS}} &  \\ 
\hline 
\multicolumn{11}{|l|} {\textbf{\cite{furman:2004} Test Set}} \\ 
\texttt{10sp-la1} & 16 & 17 & 16 & 16 & 15 & 15 & \emph{13} & 14 & \emph{13} & \textbf{12\phantom{*}} \\ 
\texttt{10sp-ol1} & 18 & 18 & 17 & 18 & 18 & 19 & 17 & \emph{15} & 16 & \textbf{14\phantom{*}} \\ 
\texttt{10sp1   } & 18 & 15 & 13 & 15 & 12 & 17 & 15 & \emph{11} & 12 & \textbf{10\phantom{*}} \\ 
\texttt{12sp1   } & 16 & 17 & \emph{13} & 14 & 15 & 18 & 17 & \emph{13} & \emph{13} & \textbf{12\phantom{*}} \\ 
\texttt{14sp1   } & \textbf{14} & 20 & 18 & 21 & 19 & 16 & 16 & 15 & 16 & \textbf{14\phantom{*}} \\ 
\texttt{15sp-tkm} & 20 & 22 & 20 & 23 & 25 & 21 & 22 & \textbf{19} & 21 & \textbf{19\phantom{*}} \\ 
\texttt{20sp1    } & 22 & \emph{20} & 23 & 24 & 21 & \emph{20} & 21 & \emph{20} & 21 & \textbf{19*} \\ 
\texttt{22sp-ph  } & \emph{27} & 28 & 28 & \emph{27} & 28 & 37 & \emph{27} & \emph{27} & \emph{27} & \textbf{26\phantom{*}} \\ 
\texttt{22sp1    } & 35 & 37 & 36 & 42 & 35 & 34 & 31 & \emph{27} & 29 & \textbf{25*} \\ 
\texttt{23sp1    } & 32 & 32 & 40 & 50 & 33 & 32 & 32 & \emph{26} & \emph{26} & \textbf{23*} \\ 
\texttt{28sp-as1 } & \textbf{30} & \textbf{30} & \textbf{30} & 40 & 45 & 50 & 50 & \textbf{30} & 40 & \textbf{30\phantom{*}} \\ 
\texttt{37sp-yfyv} & 44 & 40 & 45 & \emph{37} & 43 & 55 & 46 & 40 & \emph{37} & \textbf{36\phantom{*}} \\ 
\texttt{4sp1   } & \textbf{5} & \textbf{5} & \textbf{5} & \textbf{5} & \textbf{5} & \textbf{5} & \textbf{5} & \textbf{5} & \textbf{5} & \textbf{5\phantom{*}} \\ 
\texttt{6sp-cf1} & \textbf{6} & \textbf{6} & \textbf{6} & \textbf{6} & 7 & \textbf{6} & \textbf{6} & \textbf{6} & \textbf{6} & \textbf{6\phantom{*}} \\ 
\texttt{6sp-gg1} & \textbf{3} & \textbf{3} & \textbf{3} & \textbf{3} & \textbf{3} & \textbf{3} & \textbf{3} & \textbf{3} & \textbf{3} & \textbf{3\phantom{*}} \\ 
\texttt{6sp1   } &  8 &  7 &  9 &  9 & \textbf{ 6} & \textbf{ 6} & \textbf{ 6} & \textbf{ 6} & \textbf{ 6} & \textbf{6\phantom{*}} \\ 
\texttt{7sp-cm1} & \textbf{10} & \textbf{10} & \textbf{10} & \textbf{10} & \textbf{10} & \textbf{10} & \textbf{10} & \textbf{10} & \textbf{10} & \textbf{10\phantom{*}} \\ 
\texttt{7sp-s1 } & \textbf{10} & \textbf{10} & \textbf{10} & \textbf{10} & \textbf{10} & \textbf{10} & \textbf{10} & \textbf{10} & \textbf{10} & \textbf{10\phantom{*}} \\ 
\texttt{7sp-torw1} & 11 & 12 & 10 & 12 & 12 & 12 & 11 & 11 & \textbf{10} & \textbf{10\phantom{*}} \\ 
\texttt{7sp1     } & \emph{8} & 10 & 11 & 10 & \emph{8} & 9 & \emph{8} & \emph{8} & \emph{8} & \textbf{7\phantom{*}} \\ 
\texttt{7sp2     } & \textbf{7} & \textbf{7} & \textbf{7} & 9 & 9 &  \textbf{7} & \textbf{7} & \textbf{7} & \textbf{7} & \textbf{7\phantom{*}} \\ 
\texttt{7sp4     } & \textbf{8} & \textbf{8} & \textbf{8} & \textbf{8} & \textbf{8} & 10 & \textbf{8} & \textbf{8} & \textbf{8} & \textbf{8\phantom{*}} \\ 
\texttt{8sp-fs1  } & 13 & 13 & \emph{12} & \emph{12} & \emph{12} & 15 & \emph{12} & 14 & \emph{12} & \textbf{11\phantom{*}} \\ 
\texttt{8sp1     } & 11 & 11 & 10 & 14 & \textbf{9} & 10 & 10 & 10 & 10 & \textbf{9\phantom{*}} \\ 
\texttt{9sp-al1} & 16 & 17 & 16 & 16 & 15 & 15 & \emph{13} & 14 & \emph{13} & \textbf{12\phantom{*}} \\ 
\texttt{9sp-has1} & 15 & 14 & 15 & 15 & 16 & 15 & 14 & \textbf{13} & 15 & \textbf{13\phantom{*}} \\ 
\hline 
\multicolumn{11}{|l|} {\textbf{\cite{minlp,chen:2015} Test Set}} \\ 
\texttt{balanced10  } & 39 & 42 & 37 & 42 & 38 & 40 & 42 & \emph{30} & 35 & \textbf{24\phantom{*}} \\ 
\texttt{balanced12  } & 42 & 48 & 53 & 48 & 45 & 48 & 41 & \emph{37} & 41 & \textbf{28*} \\ 
\texttt{balanced15  } & 60 & 69 & 71 & 63 & 61 & 82 & 62 & \emph{43} & 51 & \textbf{37*} \\ 
\texttt{balanced5   } & 18 & 17 & 18 & 18 & 19 & 20 & 18 & \emph{15} & 19 & \textbf{14\phantom{*}} \\ 
\texttt{balanced8   } & 28 & 33 & 35 & 29 & 32 & 29 & 30 & \emph{24} & 30 & \textbf{20\phantom{*}} \\ 
\texttt{unbalanced10} & 38 & 46 & 43 & 46 & 43 & 42 & 35 & \emph{29} & 33 & \textbf{25\phantom{*}} \\ 
\texttt{unbalanced15} & 57 & 64 & 63 & 64 & 60 & 85 & 55 & \emph{44} & 49 & \textbf{36*} \\ 
\texttt{unbalanced17} & 70 & 78 & 73 & 79 & 75 & 86 & 67 & \emph{50} & 57 & \textbf{43*} \\ 
\texttt{unbalanced20} & 89 & 89 & 104 & 84 & 90 & 106 & 80 & \emph{61} & 68 & \textbf{51*} \\ 
\texttt{unbalanced5 } & 19 & 20 & \emph{18} & 21 & 22 & 19 & \emph{18} & \emph{18} & \emph{18} & \textbf{16\phantom{*}} \\ 
\hline      
\multicolumn{11}{|l|} {\textbf{\cite{grossman:2017} Test Set}} \\ 
\texttt{balanced12\_random0}   & 42 & 48 &  52 &  44 & 43 &  44 &  45 & \emph{32} & 42 & \textbf{28*} \\ 
\texttt{balanced12\_random1}   & 45 & 49 &  53 &  50 & 45 &  47 &  43 & \emph{35} & 40 & \textbf{29*} \\ 
\texttt{balanced12\_random2}   & 42 & 49 &  57 &  49 & 40 &  46 &  43 & \emph{34} & 42 & \textbf{29*} \\ 
\texttt{balanced15\_random0}   & 60 & 61 &  66 &  67 & 61 &  64 &  63 & \emph{43} & 53 & \textbf{36*} \\ 
\texttt{balanced15\_random1}   & 56 & 65 &  71 &  66 & 56 &  65 &  55 & \emph{40} & 52 & \textbf{36*} \\ 
\texttt{balanced15\_random2}   & 54 & 69 &  63 &  63 & 61 &  67 &  55 & \emph{41} & 54 & \textbf{35*} \\ 
\texttt{unbalanced17\_random0} & 74 & 80 &  86 &  81 & 65 & 102 &  72 & \emph{52} & 67 & \textbf{43*} \\ 
\texttt{unbalanced17\_random1} & 74 & 74 & 104 &  84 & 77 & 100 &  70 & \emph{55} & 56 & \textbf{44*} \\ 
\texttt{unbalanced17\_random2} & 70 & 79 &  95 &  77 & 77 & 111 &  76 & \emph{52} & 59 & \textbf{43*} \\ 
\texttt{unbalanced20\_random0} & 93 & 93 & 109 & 100 & 85 & 115 &  86 & \emph{60} & 64 & \textbf{51*} \\ 
\texttt{unbalanced20\_random1} & 83 & 89 & 117 &  92 & 88 & 114 & 100 & \emph{63} & 75 & \textbf{52*} \\ 
\texttt{unbalanced20\_random2} & 87 & 86 & 111 & 102 & 92 & 131 &  96 & \emph{69} & 74 & \textbf{52*} \\ 
\hline 
\end{tabular} 
\end{adjustbox} 
\vspace*{-0.2cm} 
\caption{Upper bounds, i.e.\ feasible solutions, computed by our heuristics and CPLEX 12.6.3 with time limit (i) 30min for the \cite{furman:2004} Test Set, (ii) 2h for the \cite{minlp,chen:2015} test set, and (iii) 4h for the \cite{grossman:2017} test set. Symbol * indicates timeout. {\textbf{Bold} values indicate the best computed value. \emph{Italic} values indicate the best heuristic result.} The proposed heuristics produce feasible as good as the exact solver for 13 of the 48 test cases. All heuristic results are available online \citep{source_code}.} 
\label{Table:Heuristic_Upper_Bounds} 
\end{table}

\begin{table} 
\scriptsize 
\begin{adjustbox}{center} 
\begin{tabular}{|lrrrrrrrrrr|}
\hline 
\multirow{2}{*} {\textbf{Test Case}} & \multicolumn{3}{|c|} {\textbf{Relaxation Rounding}} & \multicolumn{2}{c|} {\textbf{Water Filling}} & \multicolumn{4}{c|} {\textbf{Greedy Packing}} & \multirow{2}{*} {\textbf{CPLEX}} \\ 
\cline{2-10} & \multicolumn{1}{|c} {\textbf{FLPR}} & \textbf{LRR} & \multicolumn{1}{c|} {\textbf{CRR}} & \textbf{WFG} & \multicolumn{1}{c|} {\textbf{WFM}} & \textbf{LHM} & \textbf{LFM} & \textbf{LHM-LP} & \multicolumn{1}{c|} {\textbf{SS}} &  \\ 
\hline 
\multicolumn{11}{|l|} {\textbf{\cite{furman:2004} Test Set}} \\ 
\texttt{10sp-la1 } & 0.01 & 0.01 & 0.10 & 0.14 & 0.28 & 0.02 & 0.01 & 7.76 & $<0.01$ & \textbf{0.03} \\ 
\texttt{10sp-ol1 } & 0.01 & 0.01 & 0.07 & 0.10 & 0.21 & 0.02 & 0.01 & 9.83 & $<0.01$ & \textbf{0.03} \\ 
\texttt{10sp1    } & 0.01 & 0.01 & 0.10 & 0.13 & 0.26 & 0.02 & 0.01 & 7.39 & $<0.01$ & \textbf{0.05} \\ 
\texttt{12sp1    } & 0.01 & 0.01 & 0.10 & 0.25 & 0.40 & 0.04 & 0.02 & 9.79 & $<0.01$ & \textbf{0.05} \\ 
\texttt{14sp1    } & 0.01 & 0.01 & 0.13 & 0.22 & 0.42 & 0.09 & 0.04 & 24.49 & 0.02 & \textbf{41.23} \\ 
\texttt{15sp-tkm } & 0.01 & 0.01 & 0.12 & 0.23 & 0.52 & 0.17 & 0.09 & 29.46 & 0.02 & \textbf{0.07} \\ 
\texttt{20sp1    } & 0.01 & 0.01 & 0.29 & 0.37 & 0.60 & 0.52 & 0.24 & 64.63 & 0.05 & \textbf{*} \\ 
\texttt{22sp-ph  } & 0.01 & 0.02 & 0.20 & 0.46 & 0.64 & 0.92 & 0.30 & 156.76 & 0.05 & \textbf{0.04} \\ 
\texttt{22sp1    } & 0.02 & 0.02 & 0.32 & 0.39 & 0.60 & 0.76 & 0.34 & 144.77 & 0.06 & \textbf{*} \\ 
\texttt{23sp1    } & 0.04 & 0.04 & 0.34 & 0.29 & 0.52 & 0.91 & 0.40 & 239.94 & 0.07 & \textbf{*} \\ 
\texttt{28sp-as1 } & 0.01 & 0.01 & 0.09 & 0.31 & 0.57 & 1.26 & 0.38 & 227.06 & 0.05 & \textbf{0.05} \\ 
\texttt{37sp-yfyv} & 0.02 & 0.03 & 0.92 & 0.82 & 1.45 & 14.68 & 4.74 & 1435.94 & 0.50 & \textbf{7.36} \\ 
\texttt{4sp1     } & 0.01 & 0.01 & 0.04 & 0.08 & 0.16 & $<0.01$ & $<0.01$ & 0.75 & $<0.01$ & \textbf{0.02} \\ 
\texttt{6sp-cf1  } & 0.01 & 0.01 & 0.09 & 0.08 & 0.18 & $<0.01$ & $<0.01$ & 1.25 & $<0.01$ & \textbf{0.03} \\ 
\texttt{6sp-gg1  } & 0.01 & 0.01 & 0.05 & 0.07 & 0.12 & $<0.01$ & $<0.01$ & 0.42 & $<0.01$ & \textbf{0.02} \\ 
\texttt{6sp1     } & 0.01 & 0.01 & 0.07 & 0.07 & 0.14 & $<0.01$ & $<0.01$ & 1.36 & $<0.01$ & \textbf{0.02} \\ 
\texttt{7sp-cm1  } & 0.01 & 0.01 & 0.05 & 0.10 & 0.24 & 0.01 & $<0.01$ & 3.51 & $<0.01$ & \textbf{0.02} \\ 
\texttt{7sp-s1   } & 0.01 & 0.01 & 0.05 & 0.13 & 0.23 & 0.01 & $<0.01$ & 2.18 & $<0.01$ & \textbf{0.02} \\ 
\texttt{7sp-torw1} & 0.01 & 0.01 & 0.09 & 0.09 & 0.21 & 0.01 & 0.01 & 3.82 & $<0.01$ & \textbf{0.02} \\ 
\texttt{7sp1     } & 0.01 & 0.01 & 0.09 & 0.08 & 0.17 & $<0.01$ & $<0.01$ & 2.10 & $<0.01$ & \textbf{0.04} \\ 
\texttt{7sp2     } & 0.01 & 0.01 & 0.09 & 0.06 & 0.16 & $<0.01$ & $<0.01$ & 2.15 & $<0.01$ & \textbf{0.03} \\ 
\texttt{7sp4     } & 0.01 & 0.01 & 0.04 & 0.13 & 0.27 & $<0.01$ & $<0.01$ & 1.61 & $<0.01$ & \textbf{0.02} \\ 
\texttt{8sp-fs1  } & 0.01 & 0.01 & 0.09 & 0.15 & 0.27 & 0.01 & $<0.01$ & 5.84 & $<0.01$ & \textbf{0.02} \\ 
\texttt{8sp1     } & 0.01 & 0.01 & 0.10 & 0.13 & 0.25 & 0.01 & $<0.01$ & 4.75 & $<0.01$ & \textbf{0.03} \\ 
\texttt{9sp-al1  } & 0.01 & 0.01 & 0.09 & 0.13 & 0.28 & 0.02 & 0.01 & 8.18 & $<0.01$ & \textbf{0.03} \\ 
\texttt{9sp-has1 } & 0.01 & 0.01 & 0.09 & 0.12 & 0.34 & 0.02 & 0.01 & 6.99 & $<0.01$ & \textbf{0.04} \\ 
\hline           
\multicolumn{11}{|l|} {\textbf{\cite{minlp,chen:2015} Test Set}} \\ 
\texttt{balanced10  } & 0.02 & 0.02 & 0.32 & 0.43 & 0.84 & 1.15 & 0.50 & 181.13 & 0.09 & \textbf{1607.14} \\ 
\texttt{balanced12  } & 0.03 & 0.03 & 0.62 & 0.57 & 1.05 & 2.74 & 1.00 & 397.37 & 0.16 & \textbf{*} \\ 
\texttt{balanced15  } & 0.05 & 0.05 & 0.83 & 0.76 & 1.23 & 10.96 & 3.45 & 1147.96 & 0.41 & \textbf{*} \\ 
\texttt{balanced5   } & 0.01 & 0.01 & 0.11 & 0.25 & 0.50 & 0.05 & 0.03 & 13.64 & 0.01 & \textbf{0.20} \\ 
\texttt{balanced8   } & 0.02 & 0.01 & 0.21 & 0.31 & 0.65 & 0.33 & 0.15 & 68.85 & 0.04 & \textbf{69.16} \\ 
\texttt{unbalanced10} & 0.03 & 0.02 & 0.30 & 0.47 & 0.72 & 1.19 & 0.50 & 173.88 & 0.08 & \textbf{7.45} \\ 
\texttt{unbalanced15} & 0.05 & 0.04 & 0.90 & 0.74 & 1.34 & 11.28 & 3.78 & 1185.72 & 0.39 & \textbf{*} \\ 
\texttt{unbalanced17} & 0.07 & 0.07 & 1.45 & 0.95 & 1.76 & 21.25 & 8.31 & 2742.15 & 0.71 & \textbf{*} \\ 
\texttt{unbalanced20} & 0.13 & 0.13 & 3.08 & 1.25 & 2.41 & 47.55 & 15.94 & 7154.64 & 1.34 & \textbf{*} \\ 
\texttt{unbalanced5 } & 0.01 & 0.01 & 0.11 & 0.26 & 0.45 & 0.05 & 0.04 & 16.40 & 0.01 & \textbf{0.05} \\ 
\hline 
\multicolumn{11}{|l|} {\textbf{\cite{grossman:2017} Test Set}} \\ 
\texttt{balanced12\_random0  } & 0.03 & 0.03 & 0.46 & 0.59 & 1.00 & 2.51 & 1.15 & 351.10 & 0.17 & \textbf{*} \\ 
\texttt{balanced12\_random1  } & 0.03 & 0.03 & 0.46 & 0.59 & 1.11 & 2.74 & 0.99 & 398.26 & 0.16 & \textbf{*} \\ 
\texttt{balanced12\_random2  } & 0.03 & 0.03 & 0.46 & 0.60 & 0.88 & 2.61 & 0.95 & 382.57 & 0.17 & \textbf{*} \\ 
\texttt{balanced15\_random0  } & 0.04 & 0.05 & 0.83 & 0.76 & 1.49 & 8.87 & 4.13 & 1241.33 & 0.43 & \textbf{*} \\ 
\texttt{balanced15\_random1  } & 0.05 & 0.04 & 0.90 & 0.83 & 1.36 & 9.01 & 3.22 & 1041.37 & 0.42 & \textbf{*} \\ 
\texttt{balanced15\_random2  } & 0.05 & 0.05 & 0.90 & 0.82 & 1.53 & 9.26 & 3.43 & 1104.94 & 0.43 & \textbf{*} \\ 
\texttt{unbalanced17\_random0} & 0.12 & 0.11 & 1.85 & 0.95 & 1.72 & 24.25 & 8.65 & 3689.80 & 0.80 & \textbf{*} \\ 
\texttt{unbalanced17\_random1} & 0.12 & 0.12 & 1.82 & 0.79 & 1.54 & 24.08 & 8.32 & 4052.52 & 1.53 & \textbf{*} \\ 
\texttt{unbalanced17\_random2} & 0.12 & 0.12 & 1.89 & 1.00 & 1.60 & 25.60 & 9.72 & 3471.80 & 0.73 & \textbf{*} \\ 
\texttt{unbalanced20\_random0} & 0.18 & 0.17 & 3.23 & 1.22 & 2.18 & 50.67 & 18.37 & 8820.55 & 1.31 & \textbf{*} \\ 
\texttt{unbalanced20\_random1} & 0.21 & 0.19 & 3.48 & 1.24 & 2.21 & 51.19 & 19.35 & 9613.90 & 1.40 & \textbf{*} \\ 
\texttt{unbalanced20\_random2} & 0.23 & 0.22 & 3.17 & 1.28 & 2.33 & 56.47 & 17.93 & 11854.82 & 1.40 & \textbf{*} \\ 
\hline 
\end{tabular} 
\end{adjustbox} 
\vspace*{-0.2cm} 
\caption{CPU times of the heuristics and CPLEX 12.6.3 with time limit (i) 30min for the \cite{furman:2004} test set, (ii) 2h for the \cite{minlp,chen:2015} test set, and (iii) 4h for the \cite{grossman:2017} test set. An * indicates timeout. All heuristic results are available online \citep{source_code}.} 
\label{Table:Heuristic_CPU_Times} 
\end{table}

\begin{table} 
\scriptsize 
\begin{adjustbox}{center} 
\begin{tabular}{|lrrrrrrrrrrrr|}
\hline 
\multirow{3}{*} {\textbf{Test Case}} & \multicolumn{5}{|c|} {\textbf{Relaxation Rounding}} & \multicolumn{3}{c|} {\textbf{Water Filling}} & \multicolumn{4}{c|} {\textbf{CPLEX}} \\ 
\cline{2-13} & \multicolumn{2}{|c|} {\textbf{FS04}} & \multicolumn{3}{c|} {\textbf{LKM17}} & \multicolumn{1}{c|} {\textbf{FS04}} & \multicolumn{2}{c|} {\textbf{LKM17}} & \multicolumn{2}{|c|} {\textbf{FS04}} & \multicolumn{2}{c|} {\textbf{LKM17}} \\ 
\cline{2-13} & \multicolumn{1}{|c} {\textbf{FLPR}} & \textbf{LRR} & \multicolumn{1}{|c} {\textbf{FLPR}} & \textbf{LRR} & \multicolumn{1}{c|} {\textbf{CRR}} & \multicolumn{1}{c|} {\textbf{WFG}} & \textbf{WFG} & \multicolumn{1}{c|} {\textbf{WFM}} & \textbf{Value} & \textbf{Time} & \multicolumn{1}{|c} {\textbf{Value}} & \textbf{Time} \\ 
\hline 
\texttt{10sp-la1 } & 21 & 19 & 16 & 17 & 16 & 22 & 16 & 15 & 12 & 0.07 & 12 & 0.03 \\ 
\texttt{10sp-ol1 } & 22 & 17 & 18 & 18 & 17 & 23 & 18 & 18 & 14 & 0.09 & 14 & 0.03 \\ 
\texttt{10sp1    } & 14 & 14 & 18 & 15 & 13 & 21 & 15 & 12 & 10 & 2.20 & 10 & 0.05 \\ 
\texttt{12sp1    } & 17 & 18 & 16 & 17 & 13 & 18 & 14 & 15 & 12 & 0.04 & 12 & 0.05 \\ 
\texttt{14sp1    } & 27 & 21 & 14 & 20 & 18 & 27 & 21 & 19 & 14 & 33.76 & 14 & 41.23 \\ 
\texttt{15sp-tkm } & 29 & 27 & 20 & 22 & 20 & 29 & 23 & 25 & 19 & 0.70 & 19 & 0.07 \\ 
\texttt{20sp1    } & 24 & 25 & 22 & 20 & 23 & 25 & 24 & 21 & 19 & ** & 19 & * \\ 
\texttt{22sp-ph  } & 34 & 40 & 27 & 28 & 28 & 35 & 27 & 28 & 26 & 1.84 & 26 & 0.04 \\ 
\texttt{22sp1    } & 41 & 42 & 35 & 37 & 36 & 54 & 42 & 35 & 25 & ** & 25 & * \\ 
\texttt{23sp1    } & 38 & 32 & 32 & 32 & 40 & 60 & 50 & 33 & 23 & ** & 23 & * \\ 
\texttt{28sp-as1 } & 41 & 45 & 30 & 30 & 30 & 43 & 40 & 45 & 30 & 0.03 & 30 & 0.05 \\ 
\texttt{37sp-yfyv} & 67 & 59 & 44 & 40 & 45 & 61 & 37 & 43 & 36 & ** & 36 & 7.36 \\ 
\texttt{4sp1     } & 5 & 6 & 5 & 5 & 5 & 5 & 5 & 5 & 5 & 0.00 & 5 & 0.02 \\ 
\texttt{6sp-cf1  } & 6 & 6 & 6 & 6 & 6 & 7 & 6 & 7 & 6 & 0.01 & 6 & 0.03 \\ 
\texttt{6sp-gg1  } & 3 & 3 & 3 & 3 & 3 & 3 & 3 & 3 & 3 & 0.00 & 3 & 0.02 \\ 
\texttt{6sp1     } & 9 & 10 & 8 & 7 & 9 & 9 & 9 & 6 & 6 & 0.00 & 6 & 0.02 \\ 
\texttt{7sp-cm1  } & 11 & 10 & 10 & 10 & 10 & 10 & 10 & 10 & 10 & 0.00 & 10 & 0.02 \\ 
\texttt{7sp-s1   } & 10 & 10 & 10 & 10 & 10 & 10 & 10 & 10 & 10 & 0.00 & 10 & 0.02 \\ 
\texttt{7sp-torw1} & 14 & 15 & 11 & 12 & 10 & 13 & 12 & 12 & 10 & 0.03 & 10 & 0.02 \\ 
\texttt{7sp1     } & 10 & 13 & 8 & 10 & 11 & 10 & 10 & 8 & 7 & 0.01 & 7 & 0.04 \\ 
\texttt{7sp2     } & 8 & 7 & 7 & 7 & 7 & 8 & 9 & 9 & 7 & 0.04 & 7 & 0.03 \\ 
\texttt{7sp4     } & 11 & 9 & 8 & 8 & 8 & 8 & 8 & 8 & 8 & 0.00 & 8 & 0.02 \\ 
\texttt{8sp-fs1  } & 14 & 14 & 13 & 13 & 12 & 14 & 12 & 12 & 11 & 0.01 & 11 & 0.02 \\ 
\texttt{8sp1     } & 11 & 13 & 11 & 11 & 10 & 14 & 14 & 9 & 9 & 0.03 & 9 & 0.03 \\ 
\texttt{9sp-al1  } & 17 & 19 & 16 & 17 & 16 & 20 & 16 & 15 & 12 & 0.03 & 12 & 0.03 \\ 
\texttt{9sp-has1 } & 16 & 14 & 15 & 14 & 15 & 18 & 15 & 16 & 13 & 0.03 & 13 & 0.04 \\ 
\hline 
\end{tabular} 
\end{adjustbox} 
\vspace*{-0.2cm} 
\caption{Comparison of our results (labelled LKM17) with the ones reported by \citet{furman:2004} (labelled FS04). The LKM17 heuristics FLPR, LRR, and WFG perform better than their FS04 counterparts because of our improved calculation of the big-M parameter $U_{ij}$. The CPLEX comparison basically confirms that CPLEX has improved in the past 13 years: LKM17 use CPLEX 12.6.3 while FS04 use CPLEX 7.0. An * indicates 30min timeout while ** corresponds to a 7h timeout. All results are available online \citep{source_code}.} 
\label{Table:Comparisons} 
\end{table}

\begin{table} 
\scriptsize 
\begin{adjustbox}{center} 
\begin{tabular}{|lrrrrrrrrr|}
\hline 
\multirow{2}{*} {\textbf{Test Case}} & \multicolumn{3}{|c|} {\textbf{Fractional LP Rounding}} & \multicolumn{3}{c|} {\textbf{Lagrangian Relaxation Rounding}} & \multicolumn{3}{c|} {\textbf{Fractional Relaxation}} \\ 
\cline{2-10} & \multicolumn{1}{|c} {\textbf{Simple}} & \textbf{GTA97} & \textbf{LKM17} & \multicolumn{1}{|c} {\textbf{Simple}} & \textbf{GTA97} & \textbf{LKM17} & \multicolumn{1}{|c} {\textbf{Simple}} & \textbf{GTA97} & \textbf{LKM17} \\ 
\hline 
\multicolumn{10}{|l|} {\textbf{\cite{furman:2004} Test Set}} \\ 
\texttt{10sp-la1 } &         17  & \textbf{15} &         16  &         17  & \textbf{16} &         17  &          7.04  &          7.57  & \textbf{ 8.35} \\ 
\texttt{10sp-ol1 } &         20  &         19  & \textbf{18} &         22  &         23  & \textbf{18} &          8.29  &          8.86  & \textbf{ 9.94} \\ 
\texttt{10sp1    } & \textbf{18} & \textbf{18} & \textbf{18} & \textbf{14} &         17  &         15  &          7.11  &          7.24  & \textbf{ 7.39} \\ 
\texttt{12sp1    } &         17  &         17  & \textbf{16} & \textbf{17} & \textbf{17} & \textbf{17} &         10.06  &         10.12  & \textbf{10.26} \\ 
\texttt{14sp1    } &         27  &         20  & \textbf{14} &         22  & \textbf{17} &         20  &          8.79  &          8.92  & \textbf{ 9.06} \\ 
\texttt{15sp-tkm } &         27  &         27  & \textbf{20} &         25  &         25  & \textbf{22} &         11.01  &         11.47  & \textbf{14.31} \\ 
\texttt{20sp1    } & \textbf{22} & \textbf{22} & \textbf{22} &         27  &         24  & \textbf{20} & \textbf{11.75} & \textbf{11.75} & \textbf{11.75} \\ 
\texttt{22sp-ph  } &         31  &         30  & \textbf{27} &         32  &         30  & \textbf{28} &         20.15  &         20.89  & \textbf{22.23} \\ 
\texttt{22sp1    } &         45  &         37  & \textbf{35} &         40  &         44  & \textbf{37} &         13.66  &         14.04  & \textbf{15.86} \\ 
\texttt{23sp1    } &         40  &         37  & \textbf{32} &         35  &         33  & \textbf{32} &         13.31  & \textbf{13.40} & \textbf{13.40} \\ 
\texttt{28sp-as1 } &         41  & \textbf{30} & \textbf{30} &         45  & \textbf{30} & \textbf{30} &         27.51  &         27.96  & \textbf{28.45} \\ 
\texttt{37sp-yfyv} &         56  &         53  & \textbf{44} &         42  &         42  & \textbf{40} &         31.96  &         31.93  & \textbf{32.28} \\ 
\texttt{4sp1     } & \textbf{ 5} & \textbf{ 5} & \textbf{ 5} & \textbf{ 5} & \textbf{ 5} & \textbf{ 5} &          4.03  &          4.06  & \textbf{ 4.25} \\ 
\texttt{6sp-cf1  } & \textbf{ 6} & \textbf{ 6} & \textbf{ 6} & \textbf{ 6} & \textbf{ 6} & \textbf{ 6} &          4.10  &          4.10  & \textbf{ 4.18} \\ 
\texttt{6sp-gg1  } & \textbf{ 3} & \textbf{ 3} & \textbf{ 3} & \textbf{ 3} & \textbf{ 3} & \textbf{ 3} & \textbf{ 3.00} & \textbf{ 3.00} & \textbf{ 3.00} \\ 
\texttt{6sp1     } &          8  & \textbf{ 7} &          8  &          8  & \textbf{ 7} & \textbf{ 7} & \textbf{ 4.00} & \textbf{ 4.00} & \textbf{ 4.00} \\ 
\texttt{7sp-cm1  } &         11  & \textbf{10} & \textbf{10} & \textbf{10} & \textbf{10} & \textbf{10} &          6.61  &          7.15  & \textbf{ 8.40} \\ 
\texttt{7sp-s1   } & \textbf{10} & \textbf{10} & \textbf{10} & \textbf{10} & \textbf{10} & \textbf{10} &          7.83  &          7.83  & \textbf{10.00} \\ 
\texttt{7sp-torw1} &         15  &         15  & \textbf{11} &         14  &         13  & \textbf{12} &          5.68  &          5.84  & \textbf{ 6.56} \\ 
\texttt{7sp1     } &         10  &         10  & \textbf{ 8} &          9  &          9  & \textbf{10} &          5.00  &          5.00  & \textbf{ 5.01} \\ 
\texttt{7sp2     } &          9  & \textbf{ 7} & \textbf{ 7} & \textbf{ 7} & \textbf{ 7} & \textbf{ 7} & \textbf{ 4.37} & \textbf{ 4.37} & \textbf{ 4.37} \\ 
\texttt{7sp4     } &         11  &         11  & \textbf{ 8} &          9  &          9  & \textbf{ 8} &          7.01  &          7.01  & \textbf{ 7.11} \\ 
\texttt{8sp-fs1  } &         15  & \textbf{13} & \textbf{13} &         14  & \textbf{13} & \textbf{13} &          6.89  &          7.50  & \textbf{ 8.69} \\ 
\texttt{8sp1     } & \textbf{11} & \textbf{11} & \textbf{11} &         12  &         12  & \textbf{11} &          6.15  &          6.22  & \textbf{ 6.30} \\ 
\texttt{9sp-al1  } &         17  & \textbf{15} &         16  &         17  & \textbf{16} &         17  &          7.04  &          7.57  & \textbf{ 8.35} \\ 
\texttt{9sp-has1 } &         16  & \textbf{15} & \textbf{15} & \textbf{14} &         15  & \textbf{14} &          6.91  &          7.14  & \textbf{ 9.98} \\ 
\hline                                                    
\multicolumn{10}{|l|} {\textbf{\cite{minlp,chen:2015} Test Set}} \\ 
\texttt{balanced10  } &  61 & 46 & \textbf{39} &  46 &         43  & \textbf{42} & 13.51 & 13.92 & \textbf{15.29} \\ 
\texttt{balanced12  } &  71 & 52 & \textbf{42} &  57 &         56  & \textbf{48} & 15.69 & 16.16 & \textbf{17.48} \\ 
\texttt{balanced15  } &  91 & 63 & \textbf{60} &  91 & \textbf{69} & \textbf{69} & 18.84 & 19.31 & \textbf{21.56} \\ 
\texttt{balanced5   } &  26 & 19 & \textbf{18} &  20 &         22  & \textbf{17} &  8.09 &  8.40 & \textbf{ 8.95} \\ 
\texttt{balanced8   } &  42 & 33 & \textbf{28} &  38 &         35  & \textbf{33} & 11.54 & 11.88 & \textbf{12.76} \\ 
\texttt{unbalanced10} &  52 & 49 & \textbf{38} &  53 &         54  & \textbf{46} & 14.31 & 15.05 & \textbf{16.96} \\ 
\texttt{unbalanced15} &  73 & 60 & \textbf{57} &  76 & \textbf{62} &         64  & 19.62 & 20.59 & \textbf{23.17} \\ 
\texttt{unbalanced17} &  96 & 78 & \textbf{70} & 101 &         84  & \textbf{78} & 21.90 & 23.53 & \textbf{27.48} \\ 
\texttt{unbalanced20} & 132 & 95 & \textbf{89} & 137 &         99  & \textbf{89} & 25.89 & 27.72 & \textbf{32.43} \\ 
\texttt{unbalanced5 } &  23 & 22 & \textbf{19} &  21 &         23  & \textbf{20} &  8.34 &  8.82 & \textbf{10.93} \\ 
\hline              
\multicolumn{10}{|l|} {\textbf{\cite{grossman:2017} Test Set}} \\ 
\texttt{balanced12\_random0  } &  73 &  56 & \textbf{42} &  61 &  52 & \textbf{48} & 15.76 & 16.21 & \textbf{17.51} \\ 
\texttt{balanced12\_random1  } &  62 &  56 & \textbf{45} &  60 &  54 & \textbf{49} & 15.67 & 16.06 & \textbf{17.37} \\ 
\texttt{balanced12\_random2  } &  66 &  51 & \textbf{42} &  53 &  57 & \textbf{49} & 15.67 & 16.14 & \textbf{17.40} \\ 
\texttt{balanced15\_random0  } &  93 &  68 & \textbf{60} &  75 &  73 & \textbf{61} & 18.59 & 19.19 & \textbf{21.47} \\ 
\texttt{balanced15\_random1  } &  96 &  68 & \textbf{56} &  79 &  73 & \textbf{65} & 18.86 & 19.38 & \textbf{21.59} \\ 
\texttt{balanced15\_random2  } & 102 &  64 & \textbf{54} &  86 &  76 & \textbf{69} & 18.73 & 19.41 & \textbf{21.95} \\ 
\texttt{unbalanced17\_random0} & 106 &  77 & \textbf{74} & 108 &  95 & \textbf{80} & 22.48 & 23.97 & \textbf{27.64} \\ 
\texttt{unbalanced17\_random1} & 116 &  82 & \textbf{74} &  99 &  91 & \textbf{74} & 22.43 & 23.89 & \textbf{27.66} \\ 
\texttt{unbalanced17\_random2} & 101 &  84 & \textbf{70} &  94 &  92 & \textbf{79} & 21.99 & 23.61 & \textbf{27.74} \\ 
\texttt{unbalanced20\_random0} & 131 &  95 & \textbf{93} & 136 & 103 & \textbf{93} & 26.02 & 27.91 & \textbf{32.49} \\ 
\texttt{unbalanced20\_random1} & 138 &  91 & \textbf{83} & 139 & 104 & \textbf{89} & 26.01 & 27.74 & \textbf{32.64} \\ 
\texttt{unbalanced20\_random2} & 138 & 102 & \textbf{87} & 131 & 100 & \textbf{86} & 25.68 & 27.67 & \textbf{32.60} \\ 
\hline 
\end{tabular} 
\end{adjustbox} 
\vspace*{-0.2cm} 
\caption{This table compares the effect of three different methods for computing the big-M parameter $U_{ij}$: (i) simple greedy, (ii) the \citet{gundersen:1997} (GTA97) method and (iii) our greedy Algorithm MHG (LKM17).
{\color{black} \textbf{Bold} values mark the best result for each of the heuristics.}
LKM17 outperforms the other two big-M computation methods by finding smaller feasible solutions via both Fractional LP Rounding and Lagrangian Relaxation Rounding. It also acheives the tightest fractional relaxation for all test instances.
} 
\label{Table:BigM_Comparisons} 
\end{table}

\begin{table} 
\scriptsize 
\begin{adjustbox}{center} 
\begin{tabular}{|ccccccccc|}
\hline 
\multirow{3}{*} {\textbf{Test Case}} & \multicolumn{2}{|c|} {\textbf{Relaxation Rounding}} & \multicolumn{2}{c|} {\textbf{Water Filling}} & \multicolumn{2}{c|} {\textbf{Greedy Packing}} & \multicolumn{2}{c|} {\textbf{CPLEX}} \\ 
 & \multicolumn{2}{|c|} {\textbf{FLPR}} & \multicolumn{2}{|c|} {\textbf{WFG}} & \multicolumn{2}{|c|} {\textbf{SS}} & \multicolumn{2}{|c|} {\textbf{Transshipment}} \\ 
\cline{2-9} & \multicolumn{1}{|c} {\textbf{Value}} & \textbf{Time} & \multicolumn{1}{|c} {\textbf{Value}} & \textbf{Time} & \multicolumn{1}{|c} {\textbf{Value}} & \textbf{Time} & \multicolumn{1}{|c} {\textbf{Value}} & \textbf{Time} \\ 
\hline 
\texttt{large\_scale0} & 233 & 8.84 & 306 & 58.52 & \emph{233} & \emph{642.94} & \textbf{175} & \textbf{*} \\ 
\texttt{large\_scale1} & 273 & 15.59 & 432 & 54.53 & \emph{\textbf{218}} & \emph{\textbf{652.00}} & 219 & * \\ 
\texttt{large\_scale2} & 279 & 41.83 & 497 & 54.46 & \emph{242} & \emph{670.32} & \textbf{239} & \textbf{*} \\ 
\hline 
\end{tabular} 
\end{adjustbox} 
\vspace*{-0.2cm} 
\caption{Upper bounds, i.e.\ feasible solutions, for large-scale instances computed by the least time consuming heuristics of each type and CPLEX 12.6.3 transshipment model with 4h timeout. Symbol * indicates timeout. {\textbf{Bold} marks the best upper bound. \emph{Italic} marks the best heuristic result. In instance \texttt{large\_scale1}, heuristic LFM computes the best heuristic result.}} 
\label{Table:Large_Scale_Results} 
\end{table}

\end{document}